\numberwithin{equation}{section}
\newtheorem{lemma}[equation]{Lemma}
\newtheorem{theorem}[equation]{Theorem}
\newtheorem{proposition}[equation]{Proposition}
\newcommand{\R}{{\mathbb R}}
\newcommand{\N}{{\mathbb N}}
\newcommand{\Z}{{\mathbb Z}}
\newcommand{\bI}{{\mathbf I}}
\newcommand{\bZ}{{\mathbf Z}}
\newcommand{\M}{{\mathbf M}}
\newcommand{\W}{{\mathbf W}}
\newcommand{\cD}{{\mathscr D}}
\newcommand{\cF}{{\mathscr F}}
\newcommand{\cG}{{\mathscr G}}
\newcommand{\cP}{{\mathscr P}}
\newcommand{\cS}{{\mathscr S}}
\newcommand{\cW}{{\mathscr W}}
\newcommand{\cX}{{\mathscr X}}
\newcommand{\del}{\delta}
\newcommand{\Del}{\Delta}
\newcommand{\eps}{\epsilon}
\newcommand{\gam}{\gamma}
\newcommand{\Gam}{\Gamma}
\newcommand{\lam}{\lambda}
\newcommand{\Om}{\Omega}
\renewcommand{\rho}{\varrho}
\newcommand{\B}[2]{B_{#1}(#2)}   
\newcommand{\olA}{\,\overline{\!A}}
\newcommand{\EII}{{\rm EII}}
\newcommand{\LII}{{\rm LII}}
\newcommand{\SII}{{\rm SII}}
\newcommand{\CI}{{\rm CI}}
\newcommand{\FR}{{\rm FR}}
\renewcommand{\SS}{{\rm SS}}
\newcommand{\AR}{{\rm AR}}
\newcommand{\ML}{{\rm ML}}
\newcommand{\bb}[1]{\llbracket #1\rrbracket} 
\renewcommand{\d}{\partial}
\newcommand{\on}{\mathop{\mbox{\rule{0.1ex}{1.2ex}\rule{1.1ex}{0.1ex}}}}
\newcommand{\diam}{\operatorname{diam}}
\newcommand{\D}{\Theta}
\newcommand{\cs}{\text{\rm c}} 
\newcommand{\spt}{\operatorname{spt}}
\newcommand{\CAT}{\operatorname{CAT}}
\newcommand{\FillVol}{\operatorname{FillVol}}
\newcommand{\loc}{\text{\rm loc}}
\newcommand{\sub}{\subset}
\newcommand{\sm}{\setminus}
\newcommand{\es}{\emptyset}
\newcommand{\Lip}{\operatorname{Lip}}
\begin{document}

\title{Characterizations of higher rank hyperbolicity}
\author{Tommaso Goldhirsch}
\address{Department of Mathematics\\ETH Z\"urich\\R\"amistrasse 101
  \\8092 Z\"urich\\Switzerland}
\email{tommaso.goldhirsch@math.ethz.ch}
\author{Urs Lang}
\address{Department of Mathematics\\ETH Z\"urich\\R\"amistrasse 101
  \\8092 Z\"urich\\Switzerland}
\email{lang@math.ethz.ch}
\date{August 1, 2023}
\thanks{Research supported by Swiss National Science Foundation Grant 197090.}
\subjclass[2020]{53C23, 51F30, 20F67}
\keywords{Gromov hyperbolicity, non-positive curvature, asymptotic rank,
linear isoperimetric inequality, quasi-minimizer, Morse lemma, filling radius}

\begin{abstract}  
In analogy to the various characterizations of Gromov hyperbolicity,  
we present a list of six mutually equivalent higher rank conditions
for metric spaces satisfying some assumption reminiscent of global
non-positive curvature.
\end{abstract}

\maketitle


\section{Introduction}

The concept of Gromov hyperbolicity manifests itself in many different ways.
With only mild assumptions on the underlying metric space,
the spectrum of equivalent properties includes various thin triangle
conditions, the stability of quasi-geodesics (the Morse lemma),
a linear isoperimetric filling inequality for closed curves,
and a sub-quadratic isoperimetric inequality
\cite{Bon, Bow1, Bow2, BriH, BuyS, Coo+, Ghy+, Gro-HG, Pap, Sho+, Vai, Wen-IC}.
We present a similar list of six equivalent properties in the context of
generalized non-positive curvature and higher asymptotic rank.
This complements the results in~\cite{Wen-AR} and in the recent
paper~\cite{KleL}. We give a largely self-contained proof, providing
some improvements and simplifications for the known part.

For an informal statement of the main result, let us focus on the
special case that $X$ is a proper $\CAT(0)$ or Busemann convex space.
In passing from Gromov hyperbolicity to rank $n \ge 2$, the
role of closed curves and quasi-geodesics is transferred to $n$-cycles and
$n$-chains satisfying a suitable quasi-minimality condition, respectively.
For the moment, the reader is invited to think of the chain complex of
Lipschitz singular chains with integer coefficients in $X$. Some of the
statements below involve a uniform polynomial mass bound of degree $n$ in
large balls, and we shall thus speak of $n$-chains with
{\em controlled density}. This condition holds automatically
for Lipschitz quasi-geodesics if $n = 1$ or, more generally, for Lipschitz
quasi-isometric embeddings of domains in $\R^n$ into $X$.
We show that the following are equivalent:
\begin{itemize}
\item
  the {\em asymptotic rank\/} of $X$ being at most $n$ (see below
  for the definition);
\item
  a {\em sub-Euclidean isoperimetric inequality\/} for $n$-cycles,
  corresponding to a sub-quadratic inequality in the case $n = 1$;
\item
  a {\em linear isoperimetric inequality\/} for $n$-cycles with controlled
  density;
\item
  a version of the {\em Morse lemma\/} implying in particular a bound on the
  Hausdorff distance between (the supports of) two quasi-minimizing $n$-chains
  with controlled density and equal boundary;
\item a {\em slim $(n+1)$-simplex property\/} analogous to the slimness of
  quasi-geodesic triangles in geodesic Gromov hyperbolic spaces;
\item a bound on the {\em filling radius\/} of $n$-cycles with controlled
  density.  
\end{itemize}

We now proceed to the details. The actual setup is as in~\cite{KleL}.
Suppose that $X = (X,d)$ is a proper metric space, that is, closed bounded
subsets are compact. We use the chain complex $\bI_{*,\cs}(X)$ of metric
integral currents with compact support, which comprises the singular Lipschitz
chains but is more versatile and has suitable compactness properties.
The relevant prerequisites from the theory of metric currents will be
discussed in Sect.~\ref{sect:prelim}.  
For $n \ge 1$, we say that $X$ satisfies {\em condition $(\CI_n)$} if there
is a constant $c$ such that any two points $x,y$ in $X$ can be joined by
a curve of length $\le c\,d(x,y)$, and for $k = 1,\ldots,n$, every $k$-cycle
$R \in \bI_{k,\cs}(X)$ in some $r$-ball is the boundary of an
$S \in \bI_{k+1,\cs}(X)$ with mass
\[
  \M(S) \le c\,r\,\M(R).
\]
The cone inequalities~$(\CI_n)$ hold in particular, for all $n$,
if $X$ is a $\CAT(0)$ space or a space with a conical geodesic
bicombing~\cite{DesL, Wen-EII}.
We remark that every hyperbolic group acts geometrically on a proper
polyhedral complex with such a bicombing~\cite{Lan-IH}, and further classes
of groups with this property are discussed in~\cite{Cha+, Hae, HuaO, OsaV}.
Moreover, condition~$(\CI_n)$ holds if $X$ is an $n$-connected Riemannian
manifold with a geometric action of a (quasi-geodesically) combable group;
compare Theorem~10.3.5 in~\cite{Eps+}.

The {\em asymptotic rank\/} of $X$ is the supremum of all $k \ge 0$ for
which there exist a sequence $0 < r_i \to \infty$ and subsets $Y_i \sub X$
such that the rescaled sets $(Y_i,\frac{1}{r_i}d)$ converge in the
Gromov--Hausdorff topology to the unit ball in some $k$-dimensional normed
space. This is a quasi-isometry invariant, and if $X$ is a geodesic metric
space satisfying $(\CI_1)$, then the asymptotic rank is at most $1$ if and
only if $X$ is Gromov hyperbolic~\cite{Wen-AR}.
If $X$ is a cocompact $\CAT(0)$ space or a cocompact space with a
conical geodesic bicombing, then the asymptotic
rank equals the maximal dimension of an isometrically embedded Euclidean
or normed space, respectively~\cite{Des, Kle}.

Let $S \in \bI_{n,\cs}(X)$. For constants $C \ge 1$ and $a \ge 0$, we say that
$S$ has {\em $(C,a)$-controlled density\/} if for all $x  \in X$ and $r > a$,
the piece of $S$ in the closed $r$-ball at $x$ has mass at most $Cr^n$, or
\[
  \D_{x,r}(S) := \frac{1}{r^n}\M(S \on \B{x}{r}) \le C.
\]
For almost every $r > 0$, the boundary $\d(S \on \B{x}{r})$ has finite mass
and $S \on \B{x}{r}$ is itself an element of $\bI_{n,\cs}(X)$
(see again Sect.~\ref{sect:prelim}).
Suppose that $Y \sub X$ is a closed set containing the support
$\spt(\d S)$ of $\d S$. For $Q \ge 1$ and $a \ge 0$, we say that $S$ is
{\em $(Q,a)$-quasi-minimizing mod~$Y$} if for every point $x \in \spt(S)$
at a distance $b > a$ from $Y$, the inequality
\[
  \M(S \on \B{x}{r}) \le Q\,\M(T)
\]
holds for almost all $r \in (a,b)$ and all $T \in \bI_{n,\cs}(X)$
with $\d T = \d(S \on \B{x}{r})$. 
A current $S \in \bI_{n,\cs}(X)$ is called a {\em $(Q,a)$-quasi-minimizer\/}
if $S$ is $(Q,a)$-quasi-minimizing mod~$\spt(\d S)$.
As for the analogy with quasi-geodesics, one can easily check that
for a Lipschitz quasi-isometric embedding $\gam \colon [0,l] \to X$ of an
interval, the associated current $\gam_\#\bb{0,l} \in \bI_{1,\cs}(X)$
is a quasi-minimizer with controlled density (compare the case $n = 1$ of
Proposition~\ref{prop:lip-qflats}). For $n > 1$, quasi-minimizers offer
more flexibility than quasiflats.
  
We can now state the main result of this paper. Detailed comments
and references are given below.

\begin{theorem} \label{thm:main}
Suppose that $X$ is a proper metric space satisfying condition~{\rm (CI$_n$)}
for some $n \ge 1$. Then the following six properties are equivalent:
\begin{enumerate}
\item[$(\AR_n)$]
  {\em (asymptotic rank)} the asymptotic rank of $X$ is at most $n$;
\item[$(\SII_n)$]
  {\em (sub-Euclidean isoperimetric inequality)} for all $\eps > 0$ there
  is a constant $M_0 > 0$ such that every cycle $Z \in \bI_{n,\cs}(X)$ is
  the boundary of a $V \in \bI_{n+1,\cs}(X)$ with mass\/
  $\M(V) < \eps \max\{M_0,\M(Z)\}^{(n+1)/n}$;
\item[$(\LII_n)$]
  {\em (linear isoperimetric inequality)} there is a constant $\nu > 0$,
  and for all $C > 0$ there is a $\lam > 0$, such that every cycle
  $Z \in \bI_{n,\cs}(X)$ with $(C,a)$-controlled density bounds a
  $V \in \bI_{n+1,\cs}(X)$ with\/ $\M(V) \le \max\{\lam,\nu a\}\,\M(Z)$;
\item[$(\ML_n)$]
  {\em (Morse lemma)} for all $C > 0$ and $Q \ge 1$ there is a constant
  $l \ge 0$ such that if $Z \in \bI_{n,\cs}(X)$ is a cycle with
  $(C,a)$-controlled density and $Y \sub X$ is a closed set such that $Z$ is
  $(Q,a)$-quasi-minimizing mod~$Y$, then $\spt(Z)$ is within distance
  at most\/ $\max\{l,4a\}$ from~$Y$;
\item[$(\SS_n)$]
  {\em (slim simplices)} for all $L \ge 1$ there is a constant $D \ge 0$
  such that if $\Del$ is a Euclidean $(n+1)$-simplex and
  $f \colon \d\Del \to X$ is a map whose restriction to each facet of $\Del$
  is an $(L,a)$-quasi-isometric embedding, then the image of every facet is
  within distance at most $D(1 + a)$ of the union of the images of the
  remaining ones;
\item[$(\FR_n)$]
  {\em (filling radius)} for all $C > 0$ there is a constant $h > 0$ such
  that every cycle $Z \in \bI_{n,\cs}(X)$ with $(C,a)$-controlled density
  bounds a $V \in \bI_{n+1,\cs}(X)$ whose support is within distance at most\/
  $\max\{h,a\}$ from $\spt(Z)$.
\end{enumerate}
\end{theorem}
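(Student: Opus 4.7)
The plan is to prove the equivalences via the cycle
$(\AR_n) \Rightarrow (\SII_n) \Rightarrow (\LII_n) \Rightarrow (\ML_n) \Rightarrow (\SS_n) \Rightarrow (\AR_n)$
augmented by the two extra arrows $(\ML_n) \Rightarrow (\FR_n) \Rightarrow (\AR_n)$. The first two implications are essentially due to Wenger \cite{Wen-AR}, and parts of the later arrows already appear in \cite{KleL}; the task is to give uniform current-theoretic proofs of the remaining ones while simplifying and unifying the known part.

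For $(\AR_n) \Rightarrow (\SII_n)$ I would run an asymptotic-cone argument: a sequence of cycles $Z_i$ with $\M(Z_i) \to \infty$ violating the sub-Euclidean bound, rescaled by an appropriate filling scale and passed to an ultralimit, would produce an isometrically embedded $(n+1)$-dimensional normed ball, contradicting the assumption on asymptotic rank. For $(\SII_n) \Rightarrow (\LII_n)$, the idea is that $(C,a)$-controlled density forces $\M(Z) \le C r^n$ whenever $\spt(Z) \subset \B{x}{r}$ and $r > a$, so inserting $\M(Z)^{1/n} \le C^{1/n} r$ into $\M(V) \le \eps\,\M(Z)^{(n+1)/n}$ converts the sub-Euclidean bound into one linear in $\M(Z)$ with prefactor $\eps C^{1/n} r$; choosing $\eps$ small enough absorbs everything except a residual $\nu a$ term that governs the small-scale regime $r \lesssim a$.

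The heart of the argument is $(\LII_n) \Rightarrow (\ML_n)$. Given a $(C,a)$-controlled cycle $Z$ that is $(Q,a)$-quasi-minimizing mod $Y$ and a point $x \in \spt(Z)$ at distance $b > a$ from $Y$, I would slice $Z$ by concentric balls $\B{x}{r}$, fill each boundary slice $\d(Z \on \B{x}{r})$ linearly in its mass by $(\LII_n)$, and use quasi-minimality to compare $\M(Z \on \B{x}{r})$ against this filling. The resulting differential inequality for $g(r) := \M(Z \on \B{x}{r})$ forces $g$ to grow exponentially above scale $a$, which is incompatible with the polynomial bound $g(r) \le Cr^n$ once $r$ exceeds a constant depending on $C, Q$ and the isoperimetric constants. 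This caps $b$ and yields $(\ML_n)$. Then $(\SS_n)$ drops out by taking $Z = f_\#(\d\Del)$, which is quasi-minimizing modulo the image of the remaining facets by the quasi-isometric embedding hypothesis; and $(\FR_n)$ follows by first filling $Z$ via $(\CI_n)$ and then applying $(\ML_n)$ to the resulting $(n+1)$-current to trim its support to a bounded neighborhood of $\spt(Z)$. The closing arrows $(\SS_n) \Rightarrow (\AR_n)$ and $(\FR_n) \Rightarrow (\AR_n)$ come from contradicting these properties in any $(n+1)$-dimensional normed ball that would appear as a Gromov--Hausdorff ultralimit if the rank exceeded $n$: regular $(n+1)$-simplices have facets at definite distance from the union of the others, and $n$-spheres of diameter $D$ have filling radius of order $D$.

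The principal technical obstacle is the Morse-lemma step $(\LII_n) \Rightarrow (\ML_n)$. Making the slicing rigorous requires the co-area/slicing inequality for integral currents and restriction to a co-null set of radii at which $\d(Z \on \B{x}{r})$ is an integral cycle of finite mass; moreover, the additive constant $a$ must be tracked carefully through the iteration so that the conclusion takes the sharp form $\max\{l,4a\}$ rather than $l + O(a)$. This forces a two-scale treatment, with $(\CI_n)$ carrying the burden for $r \lesssim a$ and the exponential-growth estimate governing $r > a$; matching the two regimes at the threshold and recovering the factor $4$ will be the most delicate book-keeping of the proof.
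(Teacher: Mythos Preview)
Your proposed route has genuine gaps at precisely the two steps you flag as central.

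For the step $(\LII_n) \Rightarrow (\ML_n)$: the slice $\d(Z \on \B{x}{r})$ is an $(n-1)$-cycle, so $(\LII_n)$, which fills $n$-cycles, simply does not apply to it. If you fill the slice with the tool that is actually available, namely $(\EII_{n-1})$ (which follows from $(\CI_{n-1})$), and then invoke quasi-minimality, the resulting differential inequality gives only $g(r) \ge \del_0' Q^{1-n} r^n$ for $r > 2a$ --- polynomial growth, not exponential. This is perfectly compatible with the upper bound $g(r) \le Cr^n$ from controlled density, so no contradiction arises and $b$ is not bounded. The paper's proof of the Morse lemma (Theorem~\ref{thm:morse}) does not argue on $Z$ alone: it passes to a minimizing filling $V$ of $Z$, transfers the lower density bound on $Z$ to a lower bound $\D_{x,r}(V) \ge \del$ for $r > 4a$, and then contradicts this with an \emph{upper} bound $\D_{x,r}(V) < \del$ for large $r$, obtained by iterating the sub-Euclidean inequality on slices of $V$ (Proposition~\ref{prop:min-fill}, resting on Theorem~\ref{thm:sii1}). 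The tension is between two density estimates for the filling, and $(\SII_n)$ --- not $(\LII_n)$ --- is what drives it.

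For $(\SII_n) \Rightarrow (\LII_n)$: your sketch yields only $\M(V) \le \eps C^{1/n} r\,\M(Z)$ with $r$ the radius of a ball containing $\spt(Z)$, whereas $(\LII_n)$ demands $\lam = \lam(C)$ independent of $\diam(\spt(Z))$. A cycle with $(C,a)$-controlled density can have arbitrarily large diameter and mass, and shrinking $\eps$ does not help because $M_0(\eps)$ then blows up. The paper's argument (Theorem~\ref{thm:lii}) first replaces $Z$ by a nearby cycle $Z'$ with a uniform \emph{lower} density bound via Proposition~\ref{prop:approx}, then covers $\spt(Z')$ by controlledly many balls and bounds the minimizing filling in each via Proposition~\ref{prop:min-fill}; no global diameter enters. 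Two smaller points: your $(\ML_n) \Rightarrow (\FR_n)$ does not type-check, since $(\ML_n)$ constrains $n$-cycles and cannot be ``applied to the $(n+1)$-current'' $V$; the paper gets $(\FR_n)$ from the same filling-density comparison as above. And $f_\#(\d\Del)$ is undefined when $f$ is only an $(L,a)$-quasi-isometric embedding; one needs the simplicial chain map of Proposition~\ref{prop:qflats} to produce the quasi-minimizing cycle for $(\SS_n)$.
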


Note that for $n = 1$, $(\SII_n)$ corresponds to a sub-quadratic inequality.
The equivalence of $(\AR_n)$ and $(\SII_n)$ was established in~\cite{Wen-AR}
in a more general setup for complete metric spaces. The proof of the forward
implication used an elaborate thick-thin decomposition for integral cycles
from~\cite{Wen-CT} and also the non-trivial fact that a weakly convergent
sequence of cycles converges with respect to the filling volume~\cite{Wen-FC}.
We review the entire argument. Employing an elegant new variational result
from~\cite{HuaKS-1} and introducing a more quantitative approach for the
weak convergence of cycles, we reduce the overall complexity substantially.
In fact, we prove the sub-Euclidean isoperimetric inequality first in a
somewhat restricted form (Theorem~\ref{thm:sii1}, compare Theorem~4.4
in~\cite{KleL}) and then deduce $(\LII_n)$ and $(\SII_n)$.

The implication $(\AR_n) \Rightarrow (\LII_n)$ pertains to a long-standing
open problem. In symmetric spaces of non-compact type or homogeneous
Hadamard manifolds of rank $\le n$, a linear isoperimetric inequality holds
for all $n$-cycles (see p.~105 in~\cite{Gro-AI}, \cite{Leu}, and~\cite{Isl}).
It is still unkown whether this generalizes, for instance, to cocompact
Hadamard manifolds or $\CAT(0)$ spaces of (asymptotic) rank $\le n$.
In our statement, the isoperimetric constant depends on the density bound,
so Theorem~\ref{thm:main} does not resolve this question.
Nevertheless, $(\LII_n)$ turns out to be equivalent to the remaining
properties. Examples of cycles with controlled density include
cycles lying near the union of finitely many quasiflats (see in particular
the proof of Theorem~\ref{thm:slim}).

The proofs of Theorem~5.1 and Theorem~5.2 in~\cite{KleL} show that
$(\SII_n) \Rightarrow (\ML_n) \Rightarrow (\SS_n)$, except for a less
explicit distance bound in the slim simplex property. The second step
involves an approximation result for quasiflats by quasi-minimizers.
We go through the argument in detail, keeping track of the dependence of
constants, and thus showing that the bound is linear in the
coarseness parameter $a$. This fact is used in the proof of
the backward implication $(\SS_n) \Rightarrow (\AR_n)$.

The statement of $(\ML_n)$ differs formally from the usual stability
assertion for quasi-geodesics, but is versatile.
If $S \in \bI_{n,c}(X)$ with $\spt(\d S) \sub Y$ is
quasi-minimizing mod $Y$, then the extra assumption we need in order to
conclude that $S$ is confined to a bounded neighborhood of $Y$
is that $S$ can be closed up to a cycle $Z = S - S'$
with controlled density and with $\spt(S') \sub Y$. Note that there
is no (quasi-)minimality assumption on $S'$; the density bound suffices.
However, if $S_1,S_2 \in \bI_{n,\cs}(X)$ are two $(Q,a)$-quasi-minimizers with
$\d S _1 = \d S_2$, each with $(C,a)$-controlled density, then $S_1$ is
$(Q,a)$-quasi-minimizing mod $\spt(S_2)$ and vice-versa, so $(\ML_n)$ implies
that the Hausdorff distance between the supports is bounded by $\max\{l,4a\}$
for $l = l(2C,Q)$.

The last assertion of Theorem~\ref{thm:main} is yet another way of
expressing that $n$-cycles with controlled density (such as geodesic
triangles if $n = 1$) are thin. We use an iterative application of the
sub-Euclidean isoperimetric inequality to show that the conclusion of
$(\FR_n)$ holds for every mass minimizing $V$ with $\d V = Z$.
In~\cite{Wen-AR}, the filling radius was used to prove that
$(\SII_n) \Rightarrow (\AR_n)$. Similarly, $(\FR_n) \Rightarrow (\AR_n)$.

The paper is organized as follows. In Sect.~\ref{sect:prelim} we recall the
definition of metric currents and collect some basic results.
In Sect.~\ref{sect:var} we first review an approximation result for cycles
from~\cite{HuaKS-1} and then use this to give a short proof of a quantitative
version of a result from~\cite{Wen-FC}, showing that cycles with
bounded mass and sufficiently small uniformly bounded density at some
fixed scale have small filling volume. We use this further in
Sect.~\ref{sect:conv} to discuss the convergence of cycles. 
Sect.~\ref{sect:ii} is then devoted to isoperimetric inequalities and
shows in particular that
\[
  (\AR_n) \Rightarrow (\LII_n) \Leftrightarrow (\SII_n).
\]
In Sect.~\ref{sect:qmin} we prove more explicit versions of two propositions
from~\cite{KleL} relating quasiflats and quasi-minimizers.
The concluding Sect.~\ref{sect:slim} then shows in particular that
\[
(\SII_n) \Rightarrow (\ML_n) \Rightarrow (\SS_n) \Rightarrow (\AR_n)
\quad \text{and} \quad (\SII_n)  \Rightarrow (\FR_n) \Rightarrow (\AR_n).
\]
In fact, we prove all implications (and hence Theorem~\ref{thm:main})
in a stronger form, for any class of proper metric spaces satisfying
the respective assumptions uniformly, and with constants depending only
on the data involved and on the class, rather than on individual members
(see Sect.~\ref{sect:ii} and Sect.~\ref{sect:slim}).

What is missing from the list in Theorem~\ref{thm:main} is a rank $n$ analog
of Gromov's quadruple definition of $\delta$-hyperbolicity
(\cite{Gro-HG}, p.~89).
A $2(n+1)$-point condition of this type is investigated in~\cite{JorL}.


\section{Preliminaries} \label{sect:prelim}

Currents with finite mass in complete metric spaces were introduced by
Ambrosio and Kirchheim in~\cite{AmbK}. Here, for consistency with~\cite{KleL},
we will use the local theory described in~\cite{Lan-LC}. For the class of
integral currents with compact support, the principal objects in this paper,
the formal difference between the two approaches is marginal. Assuming the
underlying metric space to be proper, we will make frequent use of the
existence of area minimizing integral currents filling a given cycle
(Theorem~\ref{thm:plateau}). Without this assumption, as an additional twist,
one could still work with almost minimal currents instead (see, for example,
Lemma~3.4 in~\cite{Wen-EII}). In particular, Theorem~\ref{thm:main} and most
results in the paper hold more generally for complete metric spaces and
Ambrosio--Kirchheim integral currents.

\subsection*{Currents}
An integral $n$-current may roughly be thought of as an oriented
$n$-dimensional Lipschitz surface equipped with a summable integer density
function. Formally though, $n$-currents are defined as functionals;
on compactly supported differential $n$-forms in the classical case
(going back to de Rham), and on suitable $(n+1)$-tuples of
real-valued locally Lipschitz functions for non-smooth ambient spaces.
The relating principle (originally proposed by De Giorgi) is that the tuple
$(f_0,\ldots,f_n)$, say if the $f_i$ are smooth functions on $\R^N$,
represents the form $f_0\,df_1 \wedge \ldots \wedge df_n$.

Let $X = (X,d)$ be a proper metric space. For $n \ge 0$, we let $\cD^n(X)$
denote the set of all $(n+1)$-tuples $(f_0,\ldots,f_n)$ of Lipschitz
functions $f_i \colon X \to \R$ such that $f_0$ has compact support
$\spt(f_0)$ (in~\cite{Lan-LC}, $f_1,\ldots,f_n$ are merely locally Lipschitz,
but the following definition is equivalent). An
{\em $n$-dimensional current\/} $S$ in $X$ is a function
$S \colon \cD^n(X) \to \R$ satisfying the following three conditions:
\begin{enumerate}
\item
$S$ is $(n+1)$-linear;
\item 
$S(f_{0,k},\ldots,f_{n,k}) \to S(f_0,\ldots,f_n)$
whenever $f_{i,k} \to f_i$ pointwise on $X$ with uniformly bounded
Lipschitz constants ($i = 0,\dots,n$) and with $\bigcup_k\spt(f_{0,k}) \sub K$
for some compact set $K \sub X$; 
\item
$S(f_0,\ldots,f_n) = 0$ whenever one of the functions
$f_1,\ldots,f_n$ is constant on a neighborhood of $\spt(f_0)$.
\end{enumerate}
It follows from these axioms that $S$ is alternating in the
last $n$ arguments. The vector space of all $n$-dimensional currents 
in $X$ is denoted $\cD_n(X)$. Every function $w \in L^1_\loc(\R^n)$ induces
a current $\bb{w} \in \cD_n(\R^n)$ defined by
\[
\bb{w}(f_0,\dots,f_n) 
:= \int w f_0\det\bigl[\d_jf_i\bigr]_{i,j = 1}^n \,dx;
\]
note that the partial derivatives $\d_jf_i$ exist almost everywhere by
Rade\-ma\-cher's theorem. For a Borel set $W \sub \R^n$ we put
$\bb{W} := \bb{\chi_W}$, where $\chi_W$ denotes the characteristic function.
(See Sect.~2 in~\cite{Lan-LC} for details.)

\subsection*{Support, push-forward, and boundary}
Let $S \in \cD_n(X)$. There exists a smallest closed subset of $X$,
the {\em support\/} $\spt(S)$ of $S$, such that the value $S(f_0,\ldots,f_n)$
depends only on the restrictions of $f_0,\dots,f_n$ to this set.
Thus, for any closed set $D \sub X$ containing $\spt(S)$,
$S$ induces a current in $\cD_n(D)$, still denoted by $S$.
For a proper Lipschitz map $\phi \colon D \to Y$ into another proper
metric space $Y$, the {\em push-forward\/} $\phi_\#S \in \cD_n(Y)$ is the
current with support in $\phi(\spt(S))$ defined by
\[
(\phi_\#S)(f_0,\ldots,f_n) := S(f_0 \circ \phi,\ldots,f_n \circ \phi)
\]
for all $(f_0,\ldots,f_n) \in \cD^n(Y)$. In the simplest case,
if $\bb{a,b} := \bb{[a,b]}$ is the current in $\cD_1(\R)$ (or $\cD_1([a,b])$)
associated with an interval, and if $\gam \colon [a,b] \to X$ is a Lipschitz
curve, then
\[
  \gam_\#\bb{a,b}(f_0, f_1) = \bb{a,b}(f_0 \circ \gam, f_1 \circ \gam) =
  \int_a^b (f_0 \circ \gam)(f_1 \circ \gam)' \,ds
\]
for all $(f_0,f_1) \in \cD^1(X)$. Similarly, every singular Lipschitz
$n$-chain in $X$ defines an element of $\cD_n(X)$; in fact, of $\bI_{n,\cs}(X)$
(see below for the definition, and~\cite{BasWY, Gol} for some reverse
approximation results). 

If $S \in \cD_n(X)$ and $n \ge 1$, then the {\em boundary}
$\d S \in \cD_{n-1}(X)$ is defined by
\[
(\d S)(f_0,\dots,f_{n-1}) := S(\tau,f_0,\dots,f_{n-1})
\]
for all $(f_0,\ldots,f_{n-1}) \in \cD^{n-1}(X)$ and for any $\tau \in \cD^0(X)$
such that $\tau \equiv 1$ in a neighborhood of $\spt(f_0)$.
It follows from~(1) and~(3) that $\d S$ is well-defined and that
$\d \circ \d = 0$. Furthermore, $\spt(\d S) \sub \spt(S)$, and
$\phi_\#(\d S) = \d(\phi_\# S)$ for $\phi \colon D \to Y$ as above.
In the example of a Lipschitz curve,
$\d(\gam_\#\bb{a,b})(f_0) = f_0(\gam(b)) - f_0(\gam(a))$ by the fundamental
theorem of calculus. (See Sect.~3 in \cite{Lan-LC}.)

\subsection*{Mass}
Let $S \in \cD_n(X)$. For an open set $U \sub X$, 
the {\em mass} $\|S\|(U) \in [0,\infty]$ of $S$ in $U$ is defined as
the supremum of $\sum_k S(f_{0,k},\ldots,f_{n,k})$
over all finite families of tuples $(f_{0,k},\ldots,f_{n,k}) \in \cD^n(X)$
such that $\bigcup_k \spt(f_{0,k}) \sub U$, $\sum_k |f_{0,k}| \le 1$, and
$f_{1,k},\ldots,f_{n,k}$ are $1$-Lipschitz.
This extends to a regular Borel measure $\|S\|$ on $X$ with
$\spt(\|S\|) = \spt(S)$, and $\M(S) := \|S\|(X)$ denotes the {\em total mass}.
For Borel sets $W,A \sub \R^n$, $\|\bb{W}\|(A)$ equals the Lebesgue measure
of $W \cap A$. For $S,T \in \cD_n(X)$,
\[
\|S + T\| \le \|S\| + \|T\|.
\]
If the measure $\|S\|$ is locally finite, then
\[
|S(f_0,\ldots,f_n)| \le \prod_{i=1}^n \Lip(f_i) \int_X |f_0| \,d\|S\|
\]
for all $(f_0,\ldots,f_n) \in \cD^n(X)$, where $\Lip(f_i)$ denotes the
Lipschitz constant. As a consequence, $S$ extends to
tuples whose first entry is merely a bounded Borel function
with compact support, and if $u \colon X \to \R$ is any bounded Borel
function, one can define the {\em restriction} $S \on u \in \cD_n(X)$ by
\[
(S \on u)(f_0,\dots,f_n) := S(uf_0,f_1,\ldots,f_n)
\]
for all $(f_0,\ldots,f_n) \in \cD^n(X)$. For a Borel set $A \sub X$,
$S \on A := S \on \chi_A$.
The measure $\|S \on A\|$ agrees with the restriction of $\|S\|$ to $A$.
If $\phi \colon D \to Y$ is as above, and $B \sub Y$ is a Borel set, then
$(\phi_\#S) \on B = \phi_\#(S \on \phi^{-1}(B))$ and
\[
\|\phi_\#S\|(B) \le \Lip(\phi)^n\,\|S\|(\phi^{-1}(B)).
\]
(See Sect.~4 in~\cite{Lan-LC}.)

\subsection*{Integral currents}
A current $S \in \cD_n(X)$ is {\em locally integer rectifiable\/}
if $\|S\|$ is locally finite and concentrated on the union of countably
many Lipschitz images of compact subsets of $\R^n$,
and for every Borel set $A \sub X$ with compact closure and every Lipschitz
map $\phi \colon \olA \to \R^n$, the current $\phi_\#(S \on A) \in \cD_n(\R^n)$
is of the form $\bb{w}$ for some {\em integer valued\/}
$w \in L^1(\R^n)$. Then $\|S\|$ is absolutely
continuous with respect to $n$-dimensional Hausdorff measure. 
Push-forwards and restrictions to Borel sets of locally integer
rectifiable currents are again locally integer rectifiable.

A current $S \in \cD_n(X)$ is called a {\em locally integral current\/} 
if $S$ is locally integer rectifiable and, for $n \ge 1$, $\|\d S\|$
is locally finite; then (by Theorem~8.7 in~\cite{Lan-LC}) $\d S$ is itself
locally integer rectifiable.
This gives a chain complex of abelian groups $\bI_{n,\loc}(X)$.
The subgroups $\bI_{n,\cs}(X)$ of {\em integral currents\/} consist
of the elements with compact support and, hence, finite total mass. 
For $X = \R^N$, there is a canonical chain isomorphism from $\bI_{*,\cs}(\R^N)$
to the chain complex of classical (Federer--Fleming)
integral currents~\cite{FedF} in $\R^N$.

For $n \ge 1$, we put $\bZ_{n,\cs}(X) := \{Z \in \bI_{n,\cs}(X): \d Z = 0\}$.
For $n = 0$, an element of $\bI_{0,\cs}(X)$ is an integral
linear combination of currents of the form $\bb{x}$, where
$\bb{x}(f_0) = f_0(x)$. We let $\bZ_{0,\cs}(X) \sub \bI_{0,\cs}(X)$ denote the
subgroup of linear combinations whose coefficients add up to zero.
The boundary of a current in $\bI_{1,\cs}(X)$ belongs to $\bZ_{0,\cs}(X)$.
Given $Z \in \bZ_{n,\cs}(X)$, for $n \ge 0$, we will call
$V \in \bI_{n+1,\cs}(X)$ a {\em filling} of $Z$ if $\d V = Z$.

\subsection*{Slicing}
Let $S \in \bI_{n,\loc}(X)$, $n \ge 1$, and let $\pi \colon X \to \R$
be a Lipschitz function. 
For $s \in \R$, the {\em slice} $T_s \in \cD_{n-1}(X)$ of $S$ with respect
to $\pi$ is the current 
\[
T_s := \d(S \on \{\pi \le s\}) - (\d S) \on \{\pi \le s\}
\]
with support in $\{\pi = s\} \cap \spt(S)$. Note that the restrictions
are defined since both $\|S\|$ and $\|\d S\|$ are locally finite.
For $a < b$, the coarea inequality 
\[
\int_a^b \M(T_s) \,ds \le \Lip(\pi)\,\|S\|(\{a < \pi < b\})
\]
holds, and if $\pi|_{\spt(S)}$ is proper, then $T_s \in \bI_{n-1,\cs}(X)$
for almost all $s \in \R$. (See Sect.~6 and Theorem~8.5 in~\cite{Lan-LC}.)

\subsection*{Convergence and compactness}
A sequence $(S_i)$ in $\cD_n(X)$ {\em converges weakly\/}
to a current $S \in \cD_n(X)$ if $S_i \to S$ pointwise as functionals
on $\cD^n(X)$. Then, for every open set $U \sub X$,
\[
\|S\|(U) \le \liminf_{i \to \infty} \|S_i\|(U),
\]
thus the mass is lower semicontinuous with respect to weak convergence.
Furthermore, weak convergence commutes with the boundary operator and
with push-forwards. For locally integral currents, the following
compactness theorem holds (see Theorem~8.10 in~\cite{Lan-LC}).

\begin{theorem} \label{thm:cptness}
Let $X$ be a proper metric space, and let $n \ge 1$.
If $(S_i)$ is a sequence in $\bI_{n,\loc}(X)$ such that
\[
  \sup_i(\|S_i\| + \|\d S_i\|)(K) < \infty
\]
for every compact set $K \sub X$, then some subsequence
$(S_{i_k})$ converges weakly to a current $S \in \bI_{n,\loc}(X)$.
\end{theorem}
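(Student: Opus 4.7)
The plan is to proceed in three stages: extract a weakly convergent subsequence in $\cD_n(X)$, verify that the limit current has locally finite mass and locally finite boundary mass, and then establish local integer rectifiability of the limit. The first two stages are fairly routine; the third is the main obstacle.

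For the first stage, I would fix an exhaustion $X = \bigcup_j K_j$ by compact sets (possible since $X$ is proper) and use the uniform estimate
\[
  |S_i(f_0,\ldots,f_n)| \le \|f_0\|_\infty \prod_{l=1}^n \Lip(f_l)\,\|S_i\|(\spt(f_0))
\]
together with the hypothesis to bound $(S_i)$ uniformly on tuples with $\spt(f_0) \sub K_j$ and prescribed Lipschitz bounds on $f_1,\ldots,f_n$. A standard Arzel\`a-type diagonal argument applied to a countable family of test tuples dense in $\cD^n(X)$ (with respect to pointwise convergence with bounded Lipschitz constants) yields a subsequence $(S_{i_k})$ converging pointwise to a functional $S$ on all of $\cD^n(X)$. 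The multilinearity and locality axioms pass to the limit trivially, and the continuity axiom follows from the mass estimate above combined with the uniform bound. Thus $S \in \cD_n(X)$.

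For the second stage, the weak lower semicontinuity of mass recalled in the preliminaries gives
\[
  \|S\|(U) \le \liminf_{k \to \infty} \|S_{i_k}\|(U)
\]
for every open $U$, so $\|S\|$ is locally finite. Applying the same inequality to the boundaries (weak convergence commutes with $\d$) yields local finiteness of $\|\d S\|$.

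The main obstacle is verifying that $S$ is locally integer rectifiable, i.e.\ that for every Borel $A \sub X$ with compact closure and every Lipschitz $\phi \colon \olA \to \R^n$, the pushforward $\phi_\#(S \on A)$ has the form $\bb{w}$ with integer-valued $w \in L^1(\R^n)$. The plan is to reduce to the classical Federer--Fleming closure theorem for integer rectifiable currents in $\R^n$. The subtlety is that Borel restriction does not commute with weak limits; I would bypass this by approximating $\chi_A$ by Lipschitz cutoffs $\eta$ and, using the coarea inequality for the slicing operator, selecting level values for which the boundaries $\d(\phi_\#(S_{i_k} \on \eta))$ have mass bounded uniformly in $k$. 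The currents $\phi_\#(S_{i_k} \on \eta)$ are then classical integer rectifiable currents in $\R^n$ with uniformly bounded mass and boundary mass, so Federer--Fleming closure forces any weak limit, in particular $\phi_\#(S \on \eta)$, to be of the form $\bb{w}$ with integer $w$. Passing $\eta \to \chi_A$ yields the same for $\phi_\#(S \on A)$. Concentration of $\|S\|$ on countably many Lipschitz images of compact subsets of $\R^n$ is obtained by applying this to a countable family of Lipschitz maps refining a Borel decomposition of $\spt(S)$, and Theorem~8.7 of~\cite{Lan-LC} (already invoked in the preliminaries) then transfers integrality from $S$ to $\d S$. Hence $S \in \bI_{n,\loc}(X)$, completing the proof.
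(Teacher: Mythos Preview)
The paper does not prove this theorem; it simply cites Theorem~8.10 of~\cite{Lan-LC}. So there is no argument in the paper to compare against, and your proposal must be assessed on its own.

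Stages~1 and~2 are fine. Stage~3 has two genuine gaps.

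First, a notational confusion with real content: you write $\phi_\#(S_{i_k}\on\eta)$ for a Lipschitz cutoff $\eta$ and then invoke Federer--Fleming closure. But if $\eta$ is an honest function, $S_{i_k}\on\eta$ has density multiplied by $\eta$ and is \emph{not} integer rectifiable, so the closure theorem does not apply. Presumably you mean to restrict to a sublevel set $B_s=\{\eta\le s\}$ chosen via coarea. Even so, you then assert that the weak limit of $\phi_\#(S_{i_k}\on B_s)$ is $\phi_\#(S\on B_s)$; this is exactly the step ``restriction does not commute with weak limits'' that you flagged, and it is not resolved by the cutoff. One needs an additional argument (e.g.\ choose $s$ so that the pushforward measures $\eta_\#\|S_{i_k}\|$ do not charge $\{s\}$ in the limit, and sandwich $\chi_{B_s}$ between Lipschitz functions with small transition region).

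Second, and more seriously: even if you succeed in showing that every $\phi_\#(S\on A)$ has integer density, this does \emph{not} by itself give the other half of local integer rectifiability, namely that $\|S\|$ is concentrated on a countably $n$-rectifiable set. Your sentence ``concentration \ldots\ is obtained by applying this to a countable family of Lipschitz maps'' is not an argument; knowing that projections to $\R^n$ are rectifiable does not force the source measure to be rectifiable in a general metric space. The proofs in~\cite{AmbK} and~\cite{Lan-LC} do not proceed by pushing forward to $\R^n$ and citing classical closure. They use iterated slicing by Lipschitz functions $X\to\R$ to reduce to $0$-dimensional slices, show these slices are integral (finite integer combinations of point masses) in the limit, and then invoke a rectifiability criterion for normal currents in terms of their $0$-slices. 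That slicing-to-dimension-zero step is the heart of the closure theorem and is missing from your outline.
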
  

\subsection*{Isoperimetric inequality and Plateau problem}
Recall condition $(\CI_n)$ from the introduction.
Cone inequalities are instrumental for the proof of isoperimetric
inequalities of Euclidean type (compare Sect.~3.4 in~\cite{Gro-FRR}).
For $n \ge 1$, we say that $X$ satisfies $(\EII_n)$ if there is a constant
$\gam > 0$ such that every cycle $Z \in \bZ_{n,\cs}(X)$ has a filling
$V \in \bI_{n+1,\cs}(X)$ with mass
\[
  \M(V) \le \gam\,\M(Z)^{(n+1)/n}.
\]
To make the constants in $(\CI_n)$ and $(\EII_n)$ explicit, we will
write $(\CI_n)[c]$ and $(\EII_n)[\gam]$.
The following result was established in a more general form
in Theorem~1.2 in~\cite{Wen-EII}. 

\begin{theorem} \label{thm:eii}
For all $n \ge 1$ and $c > 0$ there is a constant $\gam > 0$ such that
for every proper metric space $X$, $(\CI_n)[c]$ implies $(\EII_n)[\gam]$.
\end{theorem}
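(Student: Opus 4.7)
I would prove Theorem~\ref{thm:eii} by induction on $n$, iterating the cone inequality $(\CI_n)[c]$ at shrinking scales. This is Gromov's original derivation of Euclidean-type isoperimetric inequalities from cone estimates, streamlined by Wenger in~\cite{Wen-EII}. The main analytic tool throughout is the coarea inequality for slices of integral currents.

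For the inductive step, assume $(\EII_{n-1})$ holds with a constant $\gam_{n-1}$ depending only on $c$ and $n-1$. Given $Z \in \bZ_{n,\cs}(X)$ with $M := \M(Z)$, pick a base point $x_0 \in \spt(Z)$ and a scale $R \sim M^{1/n}$. Applying the coarea inequality to $\pi = d(x_0,\cdot)$, one finds $r \in (R/2, R)$ for which the slice $T_r = \d(Z \on \B{x_0}{r})$ is an $(n-1)$-cycle with $\M(T_r) \le 2M/R \sim M^{(n-1)/n}$. Fill $T_r$ inside $\B{x_0}{r}$ by means of $(\CI_{n-1})[c]$, producing $W \in \bI_{n,\cs}(X)$ with $\spt(W) \sub \B{x_0}{r}$ and $\M(W) \le cr\,\M(T_r) \lesssim M$. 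Then $Z \on \B{x_0}{r} - W$ is an $n$-cycle inside $\B{x_0}{r}$, which $(\CI_n)[c]$ fills by a chain $V_0$ of mass at most $cr\,(M + \M(W)) \lesssim M^{(n+1)/n}$. The remaining task is then to fill the cycle $Z_1 := W + Z \on (X \sm \B{x_0}{r})$ and add the result to $V_0$.

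The main obstacle is to arrange that the iteration of this reduction converges with a constant depending only on $c$ and $n$. Since $\M(W)$ is itself comparable to $M$, a single step does not automatically reduce the total mass; one must either choose the base point $x_0$ so that $\B{x_0}{R}$ contains a definite fraction of $\M(Z)$, or carry out the slice-fill step simultaneously over a maximal disjoint family of balls covering $\spt(Z)$. Properly arranged, this yields a geometric series of cone fillings at successive scales $R\cdot 2^{-k}$, whose total mass sums to $\gam M^{(n+1)/n}$ with $\gam = \gam(n,c)$. The base case $n = 1$ is handled analogously, either via the structure theorem for integral $1$-cycles (decomposing $Z$ into closed loops, each of diameter at most half its length and hence fillable in a single ball by $(\CI_1)[c]$), or through a direct slicing argument that exploits the fact that any nonzero $0$-cycle has mass at least $2$, which allows the slice $T_r$ to be chosen as zero for a suitable $r$.
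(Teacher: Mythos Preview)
The paper does not prove Theorem~\ref{thm:eii}; it quotes the result from Theorem~1.2 in~\cite{Wen-EII} and only remarks that the quasi-convexity part $(\CI_0)$ of the hypothesis is not needed. Your outline is a sketch of precisely that cited argument, so there is no alternative approach to compare.

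As a sketch of Wenger's proof your outline is on target, but the one place where it remains vague is exactly the place that carries the argument. Your first proposed fix for the mass-reduction problem---choosing $x_0$ so that $\B{x_0}{R}$ at the fixed scale $R \sim M^{1/n}$ contains a definite fraction of $\M(Z)$---does not work in general: a cycle may be spread so thinly that no ball of that radius captures any definite fraction. The mechanism in~\cite{Wen-EII} is instead to pick the \emph{maximal} radius $r_0$ (possibly much smaller than $M^{1/n}$) at which some ball has density $\D_{x,r_0}(Z)$ above a fixed small threshold $\eps = \eps(n)$; maximality then forces all densities at scale $2r_0$ below $\eps$, so some slice $T_s$ with $s \in (r_0,2r_0)$ has mass $\lesssim \eps\, r_0^{\,n-1}$, whence the cap $W$ has mass small compared to $\|Z\|(\B{x_0}{r_0}) \ge \eps\, r_0^{\,n}$, and $\M(Z_1) \le (1-\eps')\,\M(Z)$ for a definite $\eps' = \eps'(n,c)$. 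Iterating this gives the convergent series. Your second suggestion (a dyadic covering at scales $R\cdot 2^{-k}$) can also be made to work and is closer to Gromov's original presentation, but it too hinges on the same density dichotomy. A minor point: you set up induction on $n$ via $(\EII_{n-1})$ but then fill $T_r$ using only $(\CI_{n-1})$, which is already part of $(\CI_n)[c]$; in Wenger's argument the lower-dimensional isoperimetric inequality enters, if at all, only to confine $\spt(W)$ to a ball of radius comparable to $r$ (as in Theorem~\ref{thm:plateau}), so that the round piece can itself be coned. Your base case $n=1$ is fine.
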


(Here the quasi-convexity condition~{\rm (CI$_0$)} is actually not needed.)
By Theorem~\ref{thm:cptness} and a well-known application of~$(\EII_n)$
one gets the following existence result for minimizing integral currents
(see the proof of Theorem~2.4 in~\cite{KleL}).

\begin{theorem} \label{thm:plateau}
Let $Z \in \bZ_{n,\cs}(X)$, where $X$ is a proper metric space
satisfying $(\CI_0)$ if $n = 0$ and $(\EII_n)[\gam]$ if $n \ge 1$.  
Then there is a filling $V \in \bI_{n+1,\loc}(X)$ of $Z$ with mass
\[
  \M(V) = \inf\{\M(V'): V' \in \bI_{n+1,\loc}(X),\,\d V' = Z\} < \infty. 
\]
In fact, every such minimizing $V$ has compact support due to the following
lower density bound: if $x \in \spt(V)$, $r > 0$, and
$\B{x}{r} \cap \spt(Z) = \es$, then
\[
  \D_{x,r}(V) := \frac{\|V\|(\B{x}{r})}{r^{n+1}} \ge
  \del_0 := \begin{cases} 2 & \text{if $n = 0$,} \\
  (n+1)^{-(n+1)}\gam^{-n} & \text{if $n \ge 1$;} \end{cases}    
\]
thus $\spt(V)$ is within distance $(\M(V)/\del_0)^{1/(n+1)}$ from $\spt(Z)$.
\end{theorem}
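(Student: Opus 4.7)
The plan is to produce the minimizer by a weak compactness argument, and then to extract the density bound via a slicing-and-replacement comparison against fillings supplied by $(\EII_n)$.

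\smallskip

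I would first use $(\EII_n)[\gam]$ (or $(\CI_0)$ for $n = 0$) to exhibit at least one filling of $Z$ of finite mass, so that the infimum
\[
  m := \inf\{\M(V') : V' \in \bI_{n+1,\loc}(X),\, \d V' = Z\}
\]
is finite. Picking a minimizing sequence $V_i$, the uniform bound $\sup_i(\M(V_i) + \M(\d V_i)) < \infty$ lets me invoke Theorem~\ref{thm:cptness} on every compact set and extract a weak subsequential limit $V \in \bI_{n+1,\loc}(X)$. Lower semicontinuity of mass and weak continuity of $\d$ then force $\M(V) \le m$ and $\d V = Z$, yielding the desired minimizer.

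\smallskip

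For the density bound, I would fix $x \in \spt(V)$ and $r > 0$ with $\B{x}{r} \cap \spt(Z) = \es$, set $\psi(s) := \|V\|(\B{x}{s})$, and slice $V$ by the $1$-Lipschitz function $\rho(\cdot) = d(x,\cdot)$. For almost every $s \in (0,r)$ the slice is a cycle $T_s = \d(V \on \B{x}{s}) \in \bZ_{n,\cs}(X)$: the term $(\d V) \on \B{x}{s}$ in the slicing formula vanishes because $\spt(\d V) = \spt(Z)$ avoids $\B{x}{r}$. Then $(\EII_n)[\gam]$ produces a filling $W_s$ of $T_s$ with $\M(W_s) \le \gam\,\M(T_s)^{(n+1)/n}$, and the competitor $V' := V - V \on \B{x}{s} + W_s$ has $\d V' = Z$. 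Mass-minimality therefore forces
\[
  \psi(s) \le \M(W_s) \le \gam\,\M(T_s)^{(n+1)/n}
\]
for a.e.\ $s \in (0,r)$. Combining this with the coarea inequality, which gives $\M(T_s) \le \psi'(s)$ a.e.\ for the monotone function $\psi$, I obtain the differential inequality
\[
  \tfrac{d}{ds}\bigl[\psi(s)^{1/(n+1)}\bigr] \ge 1 / \bigl((n+1)\,\gam^{n/(n+1)}\bigr),
\]
which I integrate from $0$ to $r$ (using monotonicity of $\psi$ to absorb any singular part of the derivative) to conclude $\psi(r) \ge \del_0\, r^{n+1}$ with $\del_0 = (n+1)^{-(n+1)}\gam^{-n}$. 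The case $n = 0$ is simpler: any nontrivial $0$-cycle has mass at least $2$, so the coarea bound $\int_0^r \M(T_s)\,ds \le \psi(r)$ directly gives $\psi(r) \ge 2r$.

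\smallskip

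With the density bound in hand, every $x \in \spt(V)$ satisfies $\del_0\, d(x,\spt(Z))^{n+1} \le \M(V) < \infty$, so $\spt(V)$ lies within distance $(\M(V)/\del_0)^{1/(n+1)}$ of the compact set $\spt(Z)$ and is thus compact, upgrading $V$ to $\bI_{n+1,\cs}(X)$. The main technical obstacle I anticipate is the passage from the a.e.\ bound $\psi(s) \le \gam\,\M(T_s)^{(n+1)/n}$ and the coarea inequality to the pointwise lower bound on $\psi(r)$: this requires a careful accounting of the slicing theorem so that $T_s \in \bZ_{n,\cs}(X)$ for a.e.\ $s$, and of the subtle interplay between the distributional derivative of the monotone function $\psi$ and its classical a.e.\ derivative when integrating the differential inequality.
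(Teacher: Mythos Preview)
Your proposal is correct and is precisely the standard argument the paper alludes to (``By Theorem~\ref{thm:cptness} and a well-known application of $(\EII_n)$\dots see the proof of Theorem~2.4 in~\cite{KleL}''): compactness plus lower semicontinuity for existence, then the slice-and-replace competitor argument for the monotonicity-type lower density bound. The only spot to be slightly more explicit is the case $n=0$: you need that $T_s \ne 0$ for a.e.\ $s$, which follows because if $T_s = 0$ then $V' = V - V \on \B{x}{s}$ is a filling of $Z$ with strictly smaller mass (since $x \in \spt(V)$ forces $\psi(s) > 0$), contradicting minimality.
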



\section{A variational argument} \label{sect:var}

We start with a slight modification and extension of an effective recent
approximation result, Proposition~4.2 in~\cite{HuaKS-1}.
The main conclusion is that for a cycle $Z$ and any $\eta > 0$ there is a
cycle $Z'$ with mass $\le \M(Z)$ such that $Z - Z'$ has a minimizing filling
with mass $\le \eta\,\M(Z)$ and $Z'$ satisfies a uniform {\em lower\/}
density bound at scales $\lesssim \eta$. This will be used in the proofs
of Theorem~\ref{thm:thin-cycles} and Theorem~\ref{thm:sii1}.
We show in addition that if $Z$ satisfies a uniform {\em upper\/} density bound
above some threshold radius, then the same holds for $Z'$;
see assertion~(5) below. This will be employed in
Theorem~\ref{thm:lii} (linear isoperimetric inequality).

\begin{proposition} \label{prop:approx}
Let $n \ge 1$ and $\gam > 0$. Suppose that $X$ is a proper metric space
satisfying $(\EII_n)[\gam]$ and, if $n \ge 2$, also $(\EII_{n-1})[\gam]$.
Then for every $Z \in \bZ_{n,\cs}(X)$ and $\eta > 0$ there exists a minimizing
$V \in \bI_{n+1,\cs}(X)$ such that the following holds for
\[
  Z' := Z - \d V, \quad \mu := \eta^{-1}\|V\| + \|Z'\|,
\]
and some constants $\alpha,\theta > 0$ depending only on $n$ and $\gam$:
\begin{enumerate}
\item $\mu(X) \le \M(Z)$, in particular $\M(Z') \le \M(Z)$
  and\/ $\M(V) \le \eta\,\M(Z)$;
\item
  $\D_{x,r}(Z') \ge \theta$ for all $x \in \spt(Z')$ and $r \in (0,\alpha\eta]$;
\item
  if $B \sub X$ is a closed set and $T := \d(V \on B) - (\d V) \on B$ is
  in $\bI_{n,c}(X)$, then $\mu(B) \le \|Z\|(B) + \M(T)$;
\item
  if\/ $\M(Z) < m := \theta(\alpha\eta)^n$, then $Z' = 0$, and
  if\/ $\M(Z) \ge m$, then $\spt(Z')$ is within distance
  at most $\eta (\alpha + \ln(\M(Z)/m))$ from $\spt(Z)$;
\item
  if there exist $C > 0$, $a \ge 0$, and $p \in X$ such that $\D_{p,r}(Z) \le C$
  for all $r > a$,
  then $\mu(\B{p}{r}) \le 2^{n+1}Cr^n$ for all $r > \max\{a,2^{n+1}\eta\}$.
\end{enumerate}
\end{proposition}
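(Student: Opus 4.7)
The plan is to take $V$ to minimize the energy functional
\[
  E(V) := \eta^{-1}\M(V) + \M(Z - \d V)
\]
over $V \in \bI_{n+1,\cs}(X)$, and to deduce all five assertions from the resulting minimality via suitable competitors. The infimum is finite since $V = 0$ is admissible with $E(0) = \M(Z)$, and existence of a minimizer follows from a standard application of the compactness Theorem~\ref{thm:cptness} and the lower semicontinuity of mass, preceded by a cut-off reduction to competitors with support in a fixed bounded ball. Assertion~(1) is then the comparison $\mu(X) = E(V) \le E(0) = \M(Z)$. For~(3), I take the competitor $V' := V - V\on B$; a short computation using the definition of $T$ gives $Z - \d V' = Z\on B + Z'\on(X\sm B) + T$, whence $\mu(X) = E(V) \le E(V') \le \mu(X\sm B) + \|Z\|(B) + \M(T)$, and subtracting $\mu(X\sm B)$ yields~(3). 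Assertion~(5) follows from~(3) applied with $B = \overline{\B{p}{s}}$: the coarea inequality provides $s \in (r, 2r)$ with $\M(T_s) \le \eta\,\mu(\B{p}{2r})/r$, and combining with $\|Z\|(\overline{\B{p}{s}}) \le 2^n C r^n$ (valid since $2r > a$) yields, for $r > \max\{a, 2^{n+1}\eta\}$, the recursion
\[
  \mu(\B{p}{r})/r^n \le 2^n C + \tfrac12\,\mu(\B{p}{2r})/(2r)^n;
\]
iterating and using that $\mu(\B{p}{R})/R^n \to 0$ as $R \to \infty$ (total mass is finite) gives $\mu(\B{p}{r}) \le 2^{n+1} C r^n$.

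For~(2), I argue by contradiction. Suppose $\D_{x_0, r}(Z') < \theta$ at a density point $x_0 \in \spt(Z')$ with $r \le \alpha\eta$. The coarea inequality applied to $Z'$ and $d(\cdot, x_0)$ yields some $s \in (r/2, r)$ for which the slice $S_s = \d(Z' \on \B{x_0}{s})$, an $(n-1)$-cycle since $\d Z' = 0$, satisfies $\M(S_s) \le 2\theta r^{n-1}$. I fill $S_s$ by some $W$ via $(\EII_{n-1})$ (for $n = 1$ the slice vanishes as an integer $0$-chain once $\theta < 1/2$), then fill the $n$-cycle $Z' \on \B{x_0}{s} - W$ by $U$ via $(\EII_n)$. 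The minimality inequality $E(V) \le E(V+U)$ then simplifies to
\[
  \|Z'\|(\B{x_0}{s}) \le \eta^{-1}\M(U) + \M(W),
\]
and substituting the mass bounds produces $\D_{x_0, r/2}(Z') \le C\,\theta^{1 + 1/n}$ with $C = C(n, \gam, \alpha)$. Choosing $\theta$ small enough ensures $C\,\theta^{1 + 1/n} < \theta/2$, and iterating at the scales $r, r/2, r/4, \ldots$ drives $\D_{x_0, 2^{-k}r}(Z') \to 0$, contradicting the positive density at $x_0$; the bound then extends to all of $\spt(Z')$ by passing to limits.

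Assertion~(4) follows in two stages. The threshold part is immediate from~(2) combined with $\M(Z') \le \M(Z)$. For the distance bound, fix $x \in \spt(Z')$, set $d := \operatorname{dist}(x, \spt(Z))$, and define $h(r) := \mu(\B{x}{r})$. Since $\|Z\|$ vanishes on $\B{x}{d}$, applying~(3) on $B = \overline{\B{x}{r}}$ with $r < d$ and using the coarea bound for $V$ (together with $\|V\|(\B{x}{r}) \le \eta\,h(r)$) yields the integral Gronwall inequality
\[
  \int_0^r h(s)\,ds \le \eta\,h(r),
\]
so that $h(r) \ge m\,e^{(r - \alpha\eta)/\eta}$ for $r \in [\alpha\eta, d]$ in view of the lower bound $h(\alpha\eta) \ge m$ from~(2). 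Comparing with $h(d) \le \mu(X) \le \M(Z)$ then yields the claimed logarithmic distance bound. The main obstacle is the quantitative bookkeeping in~(2): $\alpha$ and $\theta$ must be tuned so that the filling mass from the two isoperimetric inequalities strictly undercuts the mass saved on the ball, which pins down the constants and is the delicate point of the whole variational argument.
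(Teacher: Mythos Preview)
Your approach is the same as the paper's: minimize the penalized functional $E(V)=\eta^{-1}\M(V)+\M(Z-\d V)$ and read off~(1)--(5) from competitor comparisons. Your derivations of~(1), (3) and~(5) are essentially identical to the paper's (for~(5) you iterate upward and use $\mu(\B{p}{R})/R^n\to 0$, while the paper starts from a large $r_0$ and halves; these are equivalent). For~(2) you take a \emph{different but valid} route: the paper argues directly, analyzing the alternative ``either $\M(T_s)\le\tfrac12 f(s)$ and then $f(s)\ge\theta'\eta^n$, or $f'(s)\gtrsim f(s)^{(n-1)/n}$ and integration gives $f(r)\ge\theta r^n$'', which yields explicit $\alpha,\theta$. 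Your contradiction-plus-dyadic-iteration uses the same competitor $V+U$ and is also correct (the recursion $\tau_{k+1}\le\phi(\tau_k)$ with $\phi(\tau)/\tau\to 0$ forces $\tau_k\to 0$, contradicting positive lower density at $x_0$); the paper's version just pins down the constants more transparently.

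Two small points. First, your existence argument (``cut-off reduction to competitors with support in a fixed ball'') is vague and not obviously correct as stated, since a minimizing sequence need not have uniformly bounded support. The paper sidesteps this by minimizing over $\bI_{n+1,\loc}(X)$ via Theorem~\ref{thm:cptness} and then showing $V\in\bI_{n+1,\cs}(X)$ \emph{a posteriori}: once~(2) is established, $Z'$ has compact support, hence so does $\d V=Z-Z'$, and then the minimizing $V$ has compact support by Theorem~\ref{thm:plateau}. Second, in~(4) your integral Gronwall $\int_0^r h\le\eta\,h(r)$ only yields $h(r)\ge\tfrac{m\alpha}{n+1}\,e^{(r-\alpha\eta)/\eta}$, giving a distance bound with an extra additive $\ln\!\bigl(\tfrac{n+1}{\alpha}\bigr)$ that does not match the stated $\eta(\alpha+\ln(\M(Z)/m))$. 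The fix is immediate and is what the paper does: from~(3) you have $h(s)\le\M(T_s)$ for a.e.\ $s<d$, and coarea gives $\M(T_s)\le\tfrac{d}{ds}\|V\|(\B{x}{s})\le\eta\,h'(s)$; integrating the differential inequality $h\le\eta\,h'$ from $\alpha\eta$ to $r$ gives exactly $h(r)\ge m\,e^{(r-\alpha\eta)/\eta}$ and the stated bound.
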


\begin{proof}
Given $Z \in \bZ_{n,\cs}(X)$ and $\eta > 0$, consider the functional  
\[
  F \colon \bI_{n+1,\loc}(X) \to [0,\infty], \quad
  F(V') = \eta^{-1}\M(V') + \M(Z - \d V').
\]
Notice that $F$ is lower semicontinuous with respect to weak convergence,
like~$\M$. Moreover, $\M(V') \le \eta F(V')$
and $\M(\d V') \le F(V') + \M(Z)$ for all~$V'$, and $F(0) = \M(Z) < \infty$. 
We can thus pick a minimizing sequence for $F$ and use
Theorem~\ref{thm:cptness} to find a $V \in \bI_{n+1,\loc}(X)$ that minimizes
$F$. Now if $Z' := Z - \d V$ and $\mu := \eta^{-1}\|V\| + \|Z'\|$, then
\[
  \mu(X) = \eta^{-1}\M(V) + \M(Z') = F(V) \le F(0) = \M(Z),
\]  
so (1) holds. However, we still have to show that in fact
$V \in \bI_{n+1,\cs}(X)$.

We proceed with~(2). Let $x \in \spt(Z')$.
Put $f(s) := \|Z'\|(\B{x}{s}) > 0$ for all $s > 0$.
For almost every $s$, the slice $R_s := \d(Z' \on \B{x}{s})$ is in
$\bZ_{n-1,\cs}(X)$ and satisfies $\M(R_s) \le f'(s)$.
Suppose first that $n \ge 2$.
Then by the isoperimetric inequality there exists a filling
$T_s \in \bI_{n,\cs}(X)$ of $R_s$ such that 
\[
  \M(T_s) \le \gam\,\M(R_s)^{n/(n-1)} \le \gam f'(s)^{n/(n-1)}.
\]
Furthermore, the cycle $Z' \on \B{x}{s} - T_s$ has a filling
$W_s \in \bI_{n+1,\cs}(X)$ with
\[
  \M(W_s) \le \gam \bigl( f(s) + \M(T_s) \bigr)^{(n+1)/n}.  
\]
Since $Z - \d(V + W_s) = Z' - \d W_s = Z' \on (X \sm \B{x}{s}) + T_s$,
we have
\begin{align*}
  F(V + W_s)
  &= \eta^{-1}\M(V + W_s) + \M(Z' \on (X \sm \B{x}{s}) + T_s) \\
  &\le \eta^{-1}(\M(V) + \M(W_s)) + \|Z'\|(X \sm \B{x}{s}) + \M(T_s).
\end{align*}
It follows that $0 \le F(V + W_s) - F(V) \le \eta^{-1}\M(W_s) - f(s) + \M(T_s)$
and  
\[
  \eta \bigl( f(s) - \M(T_s) \bigr) \le \M(W_s)
  \le \gam \bigl( f(s) + \M(T_s) \bigr)^{(n+1)/n}.
\]
Hence, if $\M(T_s) \le f(s)/2$, then
$\eta f(s)/2 \le \gam (3f(s)/2)^{(n+1)/n}$ and thus
\[
  f(s) \ge \theta' \eta^n
\]
for $\theta' := 2/(3^{n+1}\gam^n)$.
Now if $f(r) < \theta' \eta^n$ for some $r$,
then $f(s) < \theta' \eta^n$ for all $s \in (0,r)$, thus
$f(s)/2 < \M(T_s) \le \gam f'(s)^{n/(n-1)}$ and
\[
f'(s)f(s)^{(1-n)/n} \ge (2\gam)^{(1-n)/n}
\]
for almost every such $s$, and integration from $0$ to $r$ yields
\[
f(r) \ge \theta r^n
\]
where $\theta := n^{-n}(2\gam)^{1-n}$. This shows that
$f(r) \ge \min\{\theta' \eta^n, \theta r^n\}$ for all $r > 0$.
Hence~(2) holds with $\alpha := (\theta'/\theta)^{1/n}$ in case $n \ge 2$.
Suppose now that $n = 1$. Then every non-zero slice $R_s \in \bZ_{0,\cs}(X)$
has mass at least $2$. If $R_s = 0$ for some $s$, then $Z' \on \B{x}{s}$
is a cycle, and repeating the above argument with $T_s = 0$ we get that
$\eta f(s) \le \M(W_s) \le \gam f(s)^2$, thus $f(s) \ge \eta/\gam$.
Hence, if $f(r) < \eta/\gam$ for some $r$, then $f'(s) \ge \M(R_s) \ge 2$
for almost every $s \in (0,r)$ and so $f(r) \ge 2r$.
We conclude that in case $n = 1$, (2) holds with $\theta := 2$ and
$\alpha := 1/(2\gam)$.

Since $\M(Z') < \infty$, it now follows from~(2) that $Z'$ has compact 
support. Hence $\spt(\d V)$ is compact, and since $V$ has finite mass
and is evidently minimizing, by Theorem~\ref{thm:plateau} $\spt(V)$ is
compact as well.

We prove~(3). Let $W := V \on B$.
If $T = \d W - (Z - Z') \on B \in \bI_{n,\cs}(X)$,
then also $(Z - Z') \on B \in \bI_{n,\cs}(X)$ and
$W \in \bI_{n+1,\cs}(X)$. Since  
$Z - \d(V - W) = Z' + \d W = Z' \on (X \sm B) + Z \on B + T$,
we have
\begin{align*}
F(V - W)
  &= \eta^{-1}\M(V - W)
    + \M \bigl( Z' \on (X \sm B) + Z \on B + T \bigr) \\
  &\le \mu(X \sm B) + \|Z\|(B) + \M(T).
\end{align*} 
Since $\mu(X) = F(V) \le F(V - W)$, it follows that
$\mu(B) = \mu(X) - \mu(X \sm B) \le \|Z\|(B) + \M(T)$ as claimed.  

The first assertion of~(4) is clear from~(1) and~(2).
Suppose now that $\M(Z) \ge m = \theta(\alpha\eta)^n$ and
$x \in \spt(Z')$ is a point at distance $D > \alpha\eta$ from $\spt(Z)$.
Set $g(s) := \mu(\B{x}{s})$ for all $s \in (0,D)$. For almost every such $s$,
the slice $T_s := \d(V \on \B{x}{s}) + Z' \on \B{x}{s}$ is in $\bI_{n,\cs}(X)$
and satisfies $\M(T_s) \le \frac{d}{ds} \|V\|(\B{x}{s})$
as well as $\M(\d T_s) \le \frac{d}{ds} \|Z'\|(\B{x}{s})$.
Hence, by~(3),
\[
  g(s) \le \M(T_s) \le \M(T_s) + \eta\,\M(\d T_s) \le \eta\,g'(s). 
\]
Integrating the inequality $1 \le \eta\,g'(s)/g(s)$ from $\alpha\eta$
to $t < D$ we get that
\[
  t \le \eta \bigl( \alpha + \ln(g(t)) - \ln(g(\alpha\eta)) \bigr).
\]
By~(1) and~(2), $g(t) \le \mu(X) \le \M(Z)$ and $g(\alpha\eta)
\ge \|Z'\|(\B{x}{\alpha\eta}) \ge m$. 
As this holds for all $t < D$, (4) follows. 

It remains to prove~(5). We will write $B_r$ for $\B{p}{r}$.
First we choose a sufficiently large $r_0 > 0$ so that
\[
\|V\|(B_{r_0}) \le 2^{n+1}\eta\,Cr_0^{\,n},
\]
then we put $r_i := 2^{-i}r_0$ for every integer $i \ge 1$.
There exists an $s \in (r_1,r_0)$ such that the slice
$T_s := \d(V \on B_s) - (\d V) \on B_s\in \bI_{n,\cs}(X)$ satisfies
\[
  \mu(B_s) \le \|Z\|(B_s) + \M(T_s) 
\]
by~(3), as well as
$\M(T_s) \le \|V\|(B_{r_0})/(r_0 - r_1) \le 2^{n+1}\eta\,Cr_0^{\,n}/r_1$.
Now if $r_1 > \max\{a,2^{n+1}\eta\}$,
then $\|Z\|(B_s) \le Cs^n$ and $\M(T_s) \le Cr_0^{\,n}$, hence
\[
  \mu(B_{r_1}) \le \mu(B_s) \le 2Cr_0^{\,n} = 2^{n+1}Cr_1^{\,n}.
\]
This also yields the above inequality for the next smaller scale, 
\[
\|V\|(B_{r_1}) \le 2^{n+1}\eta\,Cr_1^{\,n}.
\]
Finally, given any $r > \max\{a,2^{n+1}\eta\}$, we can choose
$r_0$ initially such that $r = r_k = 2^{-k}r_0$ for some $k \ge 1$.  
When $k \ge 2$, we repeat the above slicing argument 
successively for $i = 2,\dots,k$,
with $(r_i,r_{i-1})$ in place of $(r_1,r_0)$.
This eventually shows that
\[
  \mu(B_r) \le 2^{n+1}Cr^n
\]
for any $r > \max\{a,2^{n+1}\eta\}$. 
\end{proof}

As a first application of parts~(1)--(3) of Proposition~\ref{prop:approx}
we give a short proof of a variant of Proposition~5.8 in~\cite{Wen-FC}
regarding fillings of thin cycles. This result will play
a key role in the next section (see Theorem~\ref{thm:w-fillvol}).
The assumptions on $X$ are as above. The conclusion is that
for a cycle $Z \in \bZ_{n,\cs}(X)$, the {\em filling volume}
\[
  \FillVol_X(Z) := \inf\{\M(V): V \in \bI_{n+1,\cs}(X),\,\d V = Z\}
\]
can be forced to be arbitrarily small by imposing a sufficiently small
bound on the supremal mass
\[
  m_\rho(Z) := \sup_{x \in X} \|Z\|(\B{x}{\rho}) 
\]
in all closed balls of some fixed radius $\rho > 0$. The proof gives
an explicit constant involving a mass bound for $Z$. 

\begin{theorem} \label{thm:thin-cycles}
For all $n \ge 1$ and $\gam,M,\rho,\nu > 0$ there exists a constant
$\del = \del(n,\gam,M,\rho,\nu) > 0$ such that if $X$ is a proper metric space
satisfying $(\EII_n)[\gam]$ and, in case $n \ge 2$, also $(\EII_{n-1})[\gam]$,
then every $Z \in \bZ_{n,\cs}(X)$ with $\M(Z) \le M$ and $m_\rho(Z) \le \del$
has $\FillVol_X(Z) < \nu$.
\end{theorem}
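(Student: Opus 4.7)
The plan is to apply Proposition~\ref{prop:approx} to $Z$ with $\eta := \nu/(2M)$, producing $V \in \bI_{n+1,\cs}(X)$ and $Z' = Z - \d V \in \bZ_{n,\cs}(X)$, and to show that for $\delta$ sufficiently small the residual cycle $Z'$ must vanish. Since part~(1) of the proposition gives $\M(V) \le \eta\,\M(Z) \le \nu/2$, this would yield $\FillVol_X(Z) \le \M(V) < \nu$ and we are done.

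To force $Z' = 0$, I would argue by contradiction: fix $x \in \spt(Z')$ and combine two ingredients. The lower density bound~(2) gives
\[
  \|Z'\|(\B{x}{r}) \ge \theta\,r^n \quad \text{for all } r \in (0,\alpha\eta].
\]
On the other hand, part~(3) applied to $B = \B{x}{s}$ together with the hypothesis $m_\eps(Z) \le \del$ yields, for a.e.\ $s \in (0,\eps)$,
\[
  \|Z'\|(\B{x}{s}) \le \del + \M(T_s), \qquad T_s := \d(V \on B) - (\d V) \on B.
\]
The crucial step is to bound $\M(T_s)$ strictly below $\theta\,s^n$ for some admissible $s$. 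The naive bound $\M(T_s) \lesssim \eta M / \eps$ from the global $\M(V) \le \eta M$ does not decay with $s$ and is insufficient. Instead, since $\M(Z) \le M$ and $m_\eps(Z) \le \del$ imply that $Z$ has $(M/\eps^n,\, \eps(\del/M)^{1/n})$-controlled density at $x$, part~(5) gives the local improvement $\|V\|(\B{x}{r}) \le 2^{n+1}\eta\,(M/\eps^n)\,r^n$ for $r$ above a threshold tending to $0$ with $\del,\eta$; coarea then supplies some $s \in (r, 2r)$ with $\M(T_s) \le c_n \eta\,(M/\eps^n)\,s^{n-1}$. Comparing with the lower bound at a scale $s \lesssim \alpha\eta$ yields an inequality of the form
\[
  \theta\,s^n \le \del + c_n\,\eta\,(M/\eps^n)\,s^{n-1},
\]
which for $\del$ strictly smaller than some $\del_0(n,\gam,M,\eps,\nu) > 0$ is contradicted.

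The main obstacle is ensuring that the two scale regimes genuinely overlap, i.e.\ that $\max\{\eps(\del/M)^{1/n}, 2^{n+1}\eta\} < \alpha\eta$. For small $\del$ this reduces to $2^{n+1} < \alpha$, which is not automatic from $n$ and $\gam$ (for $n=1$, $\alpha = 1/(2\gam)$ from the proof of the proposition). When the inequality fails, one runs Proposition~\ref{prop:approx} iteratively with successively smaller parameters $\eta_k$ chosen along a geometric sequence summing to $\nu/(2M)$: part~(1) keeps $\sum \M(V_k) < \nu/2$, while the local density improvement from part~(5) compounds at each stage until the favorable regime is reached and the density contradiction finishes the argument in finitely many steps. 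Tracking the constants through the finitely many iterations yields the claimed dependence $\del = \del(n,\gam,M,\eps,\nu)$.
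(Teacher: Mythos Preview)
Your strategy of forcing $Z' = 0$ is more than is needed, and the density comparison you set up does not close. Even granting the scale overlap, look at the inequality you want to contradict,
\[
  \theta\,s^n \le \del + c_n\,\eta\,(M/\eps^n)\,s^{n-1},
\]
at the largest admissible scale $s = \alpha\eta$: dividing by $\alpha^{n-1}\eta^n$ and sending $\del \to 0$ leaves $\theta\alpha \le c_n\,M/\eps^n$. Since $\theta,\alpha,c_n$ depend only on $n,\gam$ while $M/\eps^n$ is an independent input, this inequality can certainly hold, and then no positive $\del$ produces a contradiction. The scale-overlap issue you flag ($2^{n+1} < \alpha$) is a second, separate obstruction, and the iteration you propose does not cure it: the threshold $2^{n+1}\eta_1$ from the first application of part~(5) persists through all later steps, while the window $(0,\alpha\eta_k]$ for the lower bound only shrinks as $\eta_k$ decreases, so the two regimes never meet. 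The claim that the improvement ``compounds'' is not substantiated.

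The paper takes a different route and does not try to kill $Z'$. With $r := \min\{\alpha\eta,\eps/4\}$ and $\del := \frac12\theta r^n$, it picks a maximal $2r$-separated net $N \sub \spt(Z')$; the lower density bound~(2) gives $|N| \le \M(Z')/(2\del)$, whence the hypothesis $m_\eps(Z) \le \del$ yields $\|Z\|\bigl(\bigcup_{x\in N}\B{x}{\eps}\bigr) \le \frac12\M(Z')$. Since $\spt(Z')$ lies in the union of the $\frac{\eps}{2}$-balls, applying~(3) to this union at radii $s \in (\eps/2,\eps)$ gives the \emph{lower} bound $\M(T_s) \ge \frac12\M(Z')$, and coarea converts this to $\M(V) \ge \frac{\eps}{4}\M(Z')$, so $\M(Z') \le 4\eta M/\eps$. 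Choosing $\eta$ so that both $\eta M$ and $\gam(4\eta M/\eps)^{(n+1)/n}$ are $< \nu/2$, one then fills $Z'$ by $(\EII_n)$. The essential difference from your attempt is that~(3) is used at the fixed macroscopic scale $\eps$ on a set covering all of $\spt(Z')$, producing a lower bound on $\M(V)$ rather than an upper bound on $\|Z'\|$ in a single small ball.
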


\begin{proof}
Let $\alpha$ and $\theta$ be the constants
from Proposition~\ref{prop:approx}, depending on $n$ and $\gam$.
Given $M,\rho,\nu > 0$, fix $\eta > 0$ such that both $\eta M$ and
$\gam(4\eta M/\rho)^{(n+1)/n}$ are less than $\nu/2$, then put
\[
  r := \min \Bigl\{ \alpha \eta,\frac{\rho}{4} \Bigr\}, \quad
  \del := \frac12 \theta r^n.
\]
Suppose now that $Z \in \bZ_{n,\cs}(X)$ satisfies $\M(Z) \le M$ and
$m_\rho(Z) \le \del$. By Proposition~\ref{prop:approx} there exists
$V \in \bI_{n+1,\cs}(X)$ such that, for $Z' := Z - \d V$,
\begin{enumerate}
\item $\M(V) \le \eta\,\M(Z) \le \eta M < \nu/2$;
\item $\|Z'\|(\B{x}{r}) \ge \theta r^n = 2\del$ for all
  $x \in \spt(Z')$;  
\item $\|Z'\|(B) \le \|Z\|(B) + \M(T)$ whenever $B \sub X$ is a closed set
  and $T := \d(V \on B) - (\d V) \on B$ is in $\bI_{n,\cs}(X)$.
\end{enumerate}
If $Z' = 0$, then $\d V = Z$, and (1)~yields the result.
Now let $Z' \ne 0$. It remains to show that $\FillVol_X(Z') < \nu/2$.
Pick a maximal set $N \sub \spt(Z')$ of distinct points at mutual
distance $> 2r$, and put $B_s := \bigcup_{x \in N}\B{x}{s}$
for all $s > 0$.
By~(2), since the balls $\B{x}{r}$ with $x \in N$ are pairwise disjoint,
we have $2\del\,|N| \le \|Z'\|(B_r) \le \M(Z')$, and so
\[
  \|Z\|(B_\rho) \le |N|\,m_\rho(Z) \le \del\,|N| \le \frac12\,\M(Z').
\]
Furthermore, since $N$ is maximal, $\spt(Z') \sub B_{2r} \sub B_{\rho/2}$.
Hence, for almost every $s \in (\frac{\rho}{2},\rho)$, the slice 
$T_s := \d(V \on B_s) - (\d V) \on B_s \in \bI_{n,\cs}(X)$ satisfies
\[
  \M(Z') = \|Z'\|(B_s) \le \|Z\|(B_s) + \M(T_s) \le \frac12\,\M(Z') + \M(T_s)
\]
by~(3). We conclude that $\M(T_s) \ge \M(Z')/2$ and
\[
  \M(V) \ge \|V\|(B_\rho) \ge \int_{\rho/2}^\rho \M(T_s) \,ds
  \ge \frac{\rho}{4}\,\M(Z').
\]
It now follows from~(1) that $\M(Z') \le 4\eta M/\rho$,
and by the isoperimetric inequality and the choice of $\eta$
we get that $\FillVol_X(Z') < \nu/2$.
\end{proof}  


\section{Convergence of cycles} \label{sect:conv}

A central result in geometric measure theory says that a weakly convergent
sequence $S_i \to S$ of integral $n$-currents with supports in a fixed compact
set and with $\sup_i(\M(S_i) + \M(\d S_i)) < \infty$ converges in the 
{\em flat metric topology}. This means that there exist integral $n$-currents
$T_i$ and integral $(n+1)$-currents $V_i$ such that $S_i - S = T_i + \d V_i$
and $\M(T_i) + \M(V_i) \to 0$. In~$\R^N$, this property can be deduced
from the deformation theorem (see Theorem~5.5 and Theorem~7.1 in~\cite{FedF}).
The result was generalized in~\cite{Wen-FC} to Ambrosio--Kirchheim currents
in complete metric spaces satisfying condition~$(\CI_n)$ locally.
It essentially suffices to show that $\FillVol_X(Z_i) \to 0$
for any bounded sequence of cycles $Z_i$ converging weakly to $0$.
In this section we prove a somewhat amplified version of this fact
for proper metric spaces satisfying~$(\CI_n)$ globally, so as to
facilitate the proof of the sub-Euclidean isoperimetric inequality
in the next section.
The argument relies on Theorem~\ref{thm:thin-cycles} and proceeds along
the same lines as~\cite{Wen-FC}, but is simplified by the use of a uniform
notion of weak convergence. 

Let $S \in \cD_n(X)$, $n \ge 0$, and suppose that $\spt(S)$ is compact.
Note that every such $S$ extends canonically to tuples of Lipschitz functions
whose first entry is no longer required to have compact support. 
We define
\[
  \W(S) := \sup\{ S(f_0,\ldots,f_n) :
  \text{$f_0,\ldots,f_n$ are $1$-Lipschitz, $|f_0| \le 1$} \}.
\]
Evidently $\W(S) \le \M(S)$, and if $n \ge 1$, then $\W(\d S) \le \W(S)$.

The following auxiliary result is an adaptation of Proposition~6.6
in~\cite{Lan-LC} to sequences of cycles in possibly distinct proper
metric spaces. 

\begin{lemma} \label{lem:seq-slice}
Suppose that $n \ge 1$, $Z_i \in \bZ_{n,c}(X_i)$, $\sup_i\M(Z_i) < \infty$,
$\W(Z_i) \to 0$, and $\pi_i \colon X_i \to \R$ is $1$-Lipschitz.
Then for almost every $s \in \R$ there is a sequence $(i_k)$ such that
$Z_{i_k,s} := Z_{i_k} \on \{\pi_{i_k} \le s\} \in \bI_{n,\cs}(X_{i_k})$ and
\[
\sup_k \M(\d Z_{i_k,s}) < \infty, \quad
\W(\d Z_{i_k,s}) \le \W(Z_{i_k,s}) \to 0 \quad (k \to \infty).
\]
\end{lemma}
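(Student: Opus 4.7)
The plan is to combine the coarea inequality with a weak-$*$ compactness argument applied to the push-forward measures $\mu_i := (\pi_i)_\#\|Z_i\|$ on $\R$. Write $M := \sup_i\M(Z_i) < \infty$. Since $Z_i$ is a cycle, the coarea slice equals $\d Z_{i,s}$, so the coarea inequality gives $\int_\R \M(\d Z_{i,s})\,ds \le \M(Z_i) \le M$, and the properness of $\pi_i|_{\spt(Z_i)}$ yields $Z_{i,s} \in \bI_{n,\cs}(X_i)$ for a.e.\ $s$ for each fixed $i$. Fatou applied to the sequence of integrands produces $\liminf_i \M(\d Z_{i,s}) < \infty$ for a.e.\ $s$.

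The nontrivial point is $\W(Z_{i_k,s}) \to 0$. First I pass, once and for all, to a subsequence (still written $Z_i$) along which $\mu_i \to \mu$ weakly-$*$ in $C_0(\R)^*$; this is possible since $\mu_i(\R) \le M$, and the limit $\mu$ is a finite Radon measure. Let $E \sub \R$ be the full-measure set of $s$ at which all the a.e.\ properties above hold and in addition $\mu(\{s\}) = 0$ (a co-countable condition). Fix $s \in E$ and extract a further sub-subsequence $(i_k)$ with $\sup_k \M(\d Z_{i_k,s}) < \infty$ and $Z_{i_k,s} \in \bI_{n,\cs}(X_{i_k})$ for every $k$; this already delivers the first displayed conclusion.

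For the $\W$-estimate, fix a test tuple $(f_0,\ldots,f_n)$ with each $f_j$ 1-Lipschitz and $|f_0| \le 1$, and for $\eps > 0$ let $\phi_\eps \colon \R \to [0,1]$ be the piecewise linear cutoff equal to $1$ on $(-\infty,s]$ and $0$ on $[s+\eps,\infty)$. Then $\psi_\eps := \phi_\eps \circ \pi_{i_k}$ is $(1/\eps)$-Lipschitz, so $g := \psi_\eps f_0$ is $(1+1/\eps)$-Lipschitz with $|g|\le 1$, and rescaling in the first slot gives
\[
|Z_{i_k}(g,f_1,\ldots,f_n)| \le (1+1/\eps)\,\W(Z_{i_k}).
\]
The remainder $\chi_{\{\pi_{i_k}\le s\}}f_0 - g$ is supported in $\{s < \pi_{i_k} < s+\eps\}$ with absolute value $\le 1$, so the mass bound from the preliminaries gives
\[
|Z_{i_k}(\chi_{\{\pi_{i_k}\le s\}}f_0 - g,f_1,\ldots,f_n)| \le \mu_{i_k}([s,s+\eps]).
\]
Summing and taking the supremum over admissible tuples,
\[
\W(Z_{i_k,s}) \le (1+1/\eps)\,\W(Z_{i_k}) + \mu_{i_k}([s,s+\eps]).
\]
Choose $\eps$ avoiding the countable set of atoms of $\mu$ so that $[s,s+\eps]$ is a continuity set for $\mu$; weak-$*$ convergence gives $\mu_{i_k}([s,s+\eps]) \to \mu([s,s+\eps])$, and for fixed such $\eps$ sending $k\to\infty$ yields $\limsup_k \W(Z_{i_k,s}) \le \mu([s,s+\eps])$. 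Letting $\eps \to 0^+$ through these values and invoking $\mu(\{s\}) = 0$ gives $\W(Z_{i_k,s}) \to 0$. The remaining inequality $\W(\d Z_{i_k,s}) \le \W(Z_{i_k,s})$ is the general fact recorded just before the lemma.

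The main obstacle is the mismatch between the Lipschitz test class defining $\W$ and the characteristic function used to cut the current: approximating $\chi_{\{\pi_{i_k}\le s\}}$ by $\psi_\eps$ costs a factor $1+1/\eps$ that blows up as $\eps\to 0$, so one must take $k\to\infty$ first at fixed $\eps$. Extracting a subsequence along which the push-forward measures converge weakly-$*$ is exactly what makes the residual contribution $\mu_{i_k}([s,s+\eps])$ uniformly controllable in $i$ before $\eps$ is sent to $0$.
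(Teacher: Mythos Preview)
Your proof is correct and follows essentially the same approach as the paper: pass to a subsequence along which the push-forward measures $\mu_i = (\pi_i)_\#\|Z_i\|$ converge weakly, use Fatou together with the coarea inequality to control $\M(\d Z_{i_k,s})$ at almost every $s$, and approximate the characteristic cutoff by a piecewise linear $\phi_\eps\circ\pi_{i_k}$ to split $\W(Z_{i_k,s})$ into a term dominated by $(1+1/\eps)\W(Z_{i_k})$ and a term dominated by $\mu_{i_k}([s,s+\eps])$. The only cosmetic difference is that the paper fixes $\eps>0$ and first picks $\del$ with $\mu([s,s+\del])<\eps$, then uses Portmanteau for the closed set $[s,s+\del]$ to get $\mu_{i_k}([s,s+\del])<\eps$ for large $k$, whereas you take $k\to\infty$ first and then send $\eps\to 0^+$ through continuity values; both arrive at the same conclusion.
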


\begin{proof}  
Consider the Borel measures $\mu_i := \pi_{i\#}\|Z_i\|$. Since 
$\sup_i\mu_i(\R) < \infty$, some subsequence $(\mu_{i_k})$ converges weakly
to a finite Borel measure $\mu$ on $\R$. Furthermore, 
for the slices $\d Z_{i_k,s}$,
\[
  \int_{\R} \liminf_{k \to \infty} \M(\d Z_{i_k,s})\,ds
  \le \liminf_{k \to \infty} \int_{\R} \M(\d Z_{i_k,s})\,ds
  \le \sup_i\M(Z_i) < \infty.
\]
We now take $s$ so that $\mu(\{s\}) = 0$ and
$\liminf_{k \to \infty}\M(\d Z_{i_k,s}) < \infty$, then we adjust the sequence
$(i_k)$, if necessary, to arrange that $\sup_k \M(\d Z_{i_k,s}) < \infty$.  
Note that $Z_{i_k,s} \in \bI_{n,\cs}(X_{i_k})$ for all~$k$.
Let $\eps > 0$, and choose $\del > 0$ such that $\mu([s,s+\del]) < \eps$.
Let $\gam_{s,\del} \colon \R \to \R$ denote the
piecewise affine $\del^{-1}$-Lipschitz function that is $1$ on $(-\infty,s]$ and
$0$ on $[s+\del,\infty)$, and put $u_k := \gam_{s,\del} \circ \pi_{i_k}$
and $v_k := \chi_{(-\infty,s]} \circ \pi_{i_k}$.
If $k$ is sufficiently large, then 
\begin{gather*}
  \W(Z_{i_k} \on (u_k - v_k)) \le \mu_{i_k}([s,s+\del]) < \eps, \\
  \W(Z_{i_k} \on u_k) \le (1+\del^{-1})\W(Z_{i_k}) < \eps
\end{gather*}
(note that if $f_0 \colon X_{i_k} \to \R$ is a $1$-Lipschitz function
with $|f_0| \le 1$, then $u_kf_0$ is $(1 + \del^{-1})$-Lipschitz),
thus $\W(Z_{i_k,s}) = \W(Z_{i_k} \on v_k) < 2\eps$.
This gives the result.
\end{proof}  

Next, recall that a $0$-cycle $Z \in \bZ_{0,\cs}(X)$ is of the form
$Z(f) = \sum_i a_if(x_i)$ for finitely many points $x_i \in X$ and weights
$a_i \in \Z$ with $\sum_i a_i = 0$. Note that $Z(f + c) = Z(f)$ for all
$c \in \R$. We define
\[
  \cW(Z) := \sup\{Z(f): \text{$f$ is $1$-Lipschitz}\};
\]
in general $\cW(Z) \ge \W(Z)$. We have the following optimal result
for $n = 0$ (compare Theorem~1.2 in~\cite{Wen-FC}).
  
\begin{proposition} \label{prop:w-fillvol-0}
Let $X$ be a proper metric space satisfying~$(\CI_0)[c]$, that is,
any two points $x,y \in X$ can be connected by a curve of length
$\le c\,d(x,y)$. Then every cycle $Z \in \bZ_{0,\cs}(X)$ has
$\FillVol_X(Z) \le c\,\cW(Z)$, and if\/ $\W(Z) < 2$, then $\cW(Z) = \W(Z)$.
\end{proposition}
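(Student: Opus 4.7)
Write $Z = \sum_i a_i \bb{x_i} \in \bZ_{0,\cs}(X)$ with $a_i \in \Z$ and $\sum_i a_i = 0$, and let $K := \spt(Z)$, a finite set.

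For part~(1), I would invoke Kantorovich--Rubinstein duality for the finite LP on $K$: the quantity $\cW(Z) = \sup_f Z(f)$ equals $\min \sum_j d(x_j,y_j)$, the minimum taken over integer dipole decompositions $Z = \sum_j (\bb{y_j} - \bb{x_j})$, with integrality of the optimum coming from total unimodularity of the transportation-LP constraint matrix with integer supplies $a_i$. For such an optimal decomposition and each $j$, use $(\CI_0)[c]$ to pick an arclength-parametrized (hence $1$-Lipschitz) curve $\gam_j \colon [0,\ell_j] \to X$ from $x_j$ to $y_j$ with $\ell_j \le c\,d(x_j,y_j)$. Then $V_j := (\gam_j)_\# \bb{0,\ell_j} \in \bI_{1,\cs}(X)$ satisfies $\d V_j = \bb{y_j} - \bb{x_j}$ and $\M(V_j) \le \ell_j$, so $V := \sum_j V_j$ fills $Z$ with $\M(V) \le c\,\cW(Z)$.

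For part~(2), the bound $\cW(Z) \ge \W(Z)$ is immediate since the $\W$-admissible test functions form a subclass of the $\cW$-admissible ones. For the reverse under $\W(Z) < 2$, I would treat $\W(Z)$ as a second min-cost flow value: the primal LP for $\W$ differs from that for $\cW$ only by the added box constraints $-1 \le \phi_i \le 1$, and its dual augments the graph with a ghost vertex $\ast$ joined to each $x_i$ by a unit-cost arc in each direction. Total unimodularity again yields an integer optimal flow, and standard flow decomposition (combined with the triangle inequality to collapse multi-hop direct subpaths to single-hop ones) expresses it as a sum of unit-flow paths of the form $x_j \to y_j$ of cost $d(x_j,y_j)$ or $x_j \to \ast \to y_j$ of cost $2$. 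The hypothesis $\W(Z) < 2$ then forces every path-cost to be strictly less than $2$, hence of the first type with $d(x_j,y_j) < 2$; this yields an integer dipole decomposition with $\sum_j d(x_j,y_j) = \W(Z)$. Weak $\cW$-duality now gives $\cW(Z) \le \sum_j d(x_j,y_j) = \W(Z)$, hence equality.

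The main technical point is the ghost-vertex flow identification for $\W$ and the resulting integer optimum; once these are in hand, the rest is short. A self-contained alternative is to realize $\W(Z)$ directly as the Wasserstein cost $W_1(Z^+, Z^-; c)$ in the pseudo-metric $c(x,y) := \min\{d(x,y), 2\}$ (via LP duality), and then invoke Birkhoff--von~Neumann on the transportation polytope between the integer positive measures $Z^+$ and $Z^-$ to obtain an integer optimal plan.
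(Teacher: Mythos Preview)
Your proposal is correct, and for the first assertion it coincides with the paper's argument: both invoke Kantorovich--Rubinstein duality to realize $\cW(Z)$ as an integer dipole sum $\sum_j d(x_j,y_j)$, then join the pairs by curves of length $\le c\,d(x_j,y_j)$. The paper additionally supplies a self-contained replacement for the integrality step (its Lemma~4.3, via Hall's marriage theorem), avoiding the appeal to total unimodularity or Birkhoff--von~Neumann.

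For the second assertion the two arguments diverge in presentation, though they are close cousins. The paper works directly with the truncated metric $\del := \min\{d,2\}$ and the observation $\cW_\del(Z) = \W(Z)$ (your ``self-contained alternative''), picks a $\del$-1-Lipschitz optimizer $f$ together with a matching realizing $\cW_\del(Z)$, uses $\W(Z) < 2$ to arrange $|f| < 1$ on $\spt(Z)$, and then rules out any $d$-1-Lipschitz $g$ with $Z(g) > Z(f)$ by a one-line convex-combination trick: $h = (1-\eps)f + \eps g$ would have $Z(h) > \W(Z)$ while still satisfying $|h| \le 1$. Your main route instead dualizes the box-constrained LP for $\W$ to a min-cost flow on $\spt(Z) \cup \{\ast\}$ with unit-cost ghost arcs, extracts an integer optimum by total unimodularity, decomposes into unit paths, and observes that total cost $< 2$ forbids any ghost path. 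Both are valid; the paper's version is shorter and entirely elementary once the matching lemma is in hand, while yours makes the LP structure explicit and would generalize more mechanically to other box constraints.
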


\begin{proof}
We can write $Z \ne 0$ in the form
\[
  Z(f) = \sum_{i=1}^k (f(x_i) - f(y_i)) = \int_X f \,d\mu - \int_X f \,d\nu
\]
for some (not necessarily distinct) points $x_1,\ldots,x_k$ and
$y_1,\ldots,y_k$ in $X$ and the corresponding measures
$\mu := \sum_i\del_{x_i}$ and $\nu := \sum_i\del_{y_i}$.
Hence, by the Kantorovich--Rubinstein theorem (see~\cite{Edw}),
$\cW(Z)$ equals the Wasserstein distance $W_1(\mu,\nu)$.
It is well-known that for such measures the latter agrees with the
minimum of $\sum_i d(x_i,y_{\pi(i)})$ over all permutations $\pi$ of
$\{1,\ldots,k\}$.
Thus, some such sum is equal to $\cW(Z)$. We will give an alternative direct
proof of this identity in Lemma~\ref{lem:w} below.  
It now follows from condition $(\CI_0)[c]$ that $Z$ has a filling with mass
less than or equal to $c\,\cW(Z)$.

For the second assertion of the proposition,
consider the metric $\del := \min\{d,2\}$ on $X$
and let $\cW_{\del}$ denote the corresponding functional.
Note that $\cW_\del(Z) = \W(Z)$. If $Z$ is as above, then for some
$1$-Lipschitz function $f \colon (X,\del) \to \R$ and some permutation $\pi$,
we have
\[
  \cW_\del(Z) = Z(f)
  = \sum_i (f(x_i) - f(y_{\pi(i)}))
  = \sum_i \del(x_i,y_{\pi(i)}),
\]
in particular $f(x_i) - f(y_{\pi(i)}) = \del(x_i,y_{\pi(i)}) \ge 0$ for all $i$.
We can assume that the set $\bigcup_i[f(y_{\pi(i)}),f(x_i)]$ is connected;
otherwise $f$ can easily be modified so that this holds.
Hence, if $\cW_\del(Z) = \W(Z) < 2$,
then we can further arrange that $|f| < 1$. It follows that there is no
$1$-Lipschitz function $g \colon X \to \R$ with $Z(g) > \W(Z) = Z(f)$, for
otherwise a suitable convex combination $h = (1-\eps)f + \eps g$ would
satisfy $Z(h) > \W(Z)$ and $|h| \le 1$.
\end{proof}  

We now provide the alternative argument mentioned above. It is convenient
to consider pairwise distinct points but to allow distances to be zero.

\begin{lemma} \label{lem:w}
Let $(V,d)$ be a finite pseudo-metric space with a partition $V = V_+ \cup V_-$,
where $|V_+| = |V_-|$. If $f \colon V \to \R$ is a
$1$-Lipschitz function that maximizes the quantity
$\sum_{x \in V_+}f(x) - \sum_{y \in V_-}f(y)$,
then there exists a bijection $\pi \colon V_+ \to V_-$ such that
$f(x) - f(\pi(x)) = d(x,\pi(x))$ for all $x \in V_+$.
\end{lemma}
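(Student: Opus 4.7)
The plan is to apply Hall's marriage theorem to the bipartite graph $H$ on $V_+ \cup V_-$ whose edges are the \emph{tight pairs} $(x,y) \in V_+ \times V_-$ with $f(x) - f(y) = d(x,y)$; any perfect matching in $H$ will serve as the desired bijection $\pi$. It therefore suffices to verify Hall's condition $|N_H(S)| \ge |S|$ for every $S \subseteq V_+$.

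To this end I will consider the directed graph $D$ on $V$ having an arc $z \to w$ exactly when $f(z) - f(w) = d(z,w)$. Fix $S \subseteq V_+$ and let $A \subseteq V$ denote the forward closure of $S$ in $D$ (including the trivial length-$0$ paths, so $S \subseteq A$). If $x \in S$ and $w \in A$ are joined by a directed path $x = z_0 \to z_1 \to \cdots \to z_k = w$, telescoping yields
\[
  f(x) - f(w) = \sum_{i=1}^k d(z_{i-1}, z_i) \ge d(x,w)
\]
by the triangle inequality, while the reverse inequality holds because $f$ is $1$-Lipschitz. Hence $f(x) - f(w) = d(x,w)$; in particular every $w \in A \cap V_-$ lies in $N_H(S)$, so $|A \cap V_-| \le |N_H(S)|$.

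It remains to prove $|A \cap V_+| \le |A \cap V_-|$, since combined with $S \subseteq A \cap V_+$ this gives $|S| \le |A \cap V_+| \le |A \cap V_-| \le |N_H(S)|$. For this I exploit the optimality of $f$ via the perturbation $f_\eps := f + \eps\,\chi_A$. The $1$-Lipschitz property is trivially preserved for pairs $(z,w)$ lying on the same side of $A$. For $z \in A$ and $w \notin A$, the bound $f_\eps(w) - f_\eps(z) \le d(z,w)$ is automatic, while $f_\eps(z) - f_\eps(w) = f(z) - f(w) + \eps$; here the slack $d(z,w) - (f(z) - f(w))$ is strictly positive, for otherwise $z \to w$ would be an arc of $D$ and $w$ would already lie in $A$. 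Finiteness of $V$ then furnishes a uniform $\eps > 0$ for which $f_\eps$ is still $1$-Lipschitz. The target functional changes by $\eps(|A \cap V_+| - |A \cap V_-|)$, which optimality of $f$ forces to be non-positive, giving the desired inequality; Hall's theorem then produces the matching $\pi$. The one delicate point is the Lipschitz check across $\partial A$, which is the reason for defining $A$ as the forward closure of $S$ under the arcs of $D$ rather than, say, a simpler candidate such as $S \cup N_H(S)$.
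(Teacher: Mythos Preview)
Your proof is correct and follows essentially the same route as the paper's: both arguments verify Hall's condition for the bipartite graph of tight pairs by taking the forward closure under the relation $z \to w \Leftrightarrow f(z)-f(w)=d(z,w)$, using transitivity (your telescoping argument) and then the perturbation $f+\eps\chi_A$ together with optimality of $f$. Your packaging is slightly more streamlined---you take the closure in all of $V$ at once, whereas the paper first closes within $V_+$ to a maximal set and then adjoins $\Gamma(A)\subset V_-$---but the resulting perturbation set and the Lipschitz check across its boundary are identical.
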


\begin{proof}
Given $f$, define a relation $\preceq$ on $V$ such that $x \preceq y$
if and only if $f(x) - f(y) = d(x,y)$. Note that if $x \preceq y \preceq z$,
then
\[
  f(x) - f(z) = d(x,y) + d(y,z) \ge d(x,z),
\]
and since $f$ is $1$-Lipschitz, $x \preceq z$.
Thus the relation is transitive. For a set $A \sub V_+$, let $\Gam(A)$
denote the set of all $y \in V_-$ for which there is an $x \in A$ with
$x \preceq y$. Suppose first that $A$ is maximal in $V_+$ in the sense
that there is no pair $(x,y) \in A \times (V_+ \sm A)$ with $x \preceq y$.
Note that $f(x) - f(y) < d(x,y)$ whenever
$x \in A$ and $y \in C := (V_+ \sm A) \cup (V_- \sm \Gam(A))$.
By transitivity, the same strict inequality holds whenever $x \in \Gam(A)$
and $y \in C$. Hence, for some $\eps > 0$, the function $f_\eps$ obtained
from $f$ by increasing the values on $A \cup \Gam(A)$ by $\eps$ is still
$1$-Lipschitz. It follows from the maximality property of $f$ that
\[
  \eps\,|A| = \sum_{x \in V_+} (f_\eps(x) - f(x))
  \le \sum_{y \in V_-} (f_\eps(y) - f(y)) = \eps\,|\Gam(A)|,
\]
thus $|A| \le |\Gam(A)|$. Let now $A \sub V_+$ be arbitrary.
Again by transitivity, the set $A'$ of all points in $V_+$ with a precursor
in $A$ is maximal, and $\Gam(A') = \Gam(A)$, so that
$|A| \le |A'| \le |\Gam(A')| = |\Gam(A)|$. This shows that the bipartite
graph with edge set $\{(x,y) \in V_+ \times V_-: x \preceq y\}$ satisfies
the assumption of Hall's marriage theorem.
Hence, there is a matching (bijection) $\pi$ as claimed.
\end{proof}  
 
In general, for $n \ge 0$, an analog of
Proposition~\ref{prop:w-fillvol-0} holds as follows.

\begin{theorem} \label{thm:w-fillvol}
If $X_i$ satisfies $(\CI_n)[c]$ for $i \in \N$, and if the cycles
$Z_i \in \bZ_{n,c}(X_i)$ satisfy $\sup_i\M(Z_i) < \infty$
and\/ $\W(Z_i) \to 0$, then $\FillVol_{X_i}(Z_i) \to 0$.
\end{theorem}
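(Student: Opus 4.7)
The plan is to proceed by induction on $n$. For the base case $n = 0$, the hypothesis $\W(Z_i) \to 0$ gives $\W(Z_i) < 2$ for all sufficiently large $i$, so Proposition~\ref{prop:w-fillvol-0} yields $\cW(Z_i) = \W(Z_i) \to 0$ and hence $\FillVol_{X_i}(Z_i) \le c\,\cW(Z_i) \to 0$.

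For the inductive step with $n \ge 1$, assume the statement holds in dimension $n-1$, set $M := \sup_i \M(Z_i)$, and fix $\nu > 0$. A routine sub-subsequence argument reduces the goal to proving $\FillVol_{X_i}(Z_i) < \nu$ eventually along some subsequence. Let $\gam > 0$ be furnished by Theorem~\ref{thm:eii} so that every $X_i$ satisfies $(\EII_n)[\gam]$ and, if $n \ge 2$, also $(\EII_{n-1})[\gam]$. I would choose $\eps > 0$ so small that $3c\eps M < \nu/3$, and apply Theorem~\ref{thm:thin-cycles} with parameters $(n,\gam,2M,\eps,\nu/3)$ to obtain $\del > 0$ such that every $Z \in \bZ_{n,\cs}(X_i)$ with $\M(Z) \le 2M$ and $m_\eps(Z) \le \del$ admits a filling of mass $< \nu/3$.

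The key construction is an iterative peeling of the heavy parts of $Z_i$. Set $Z_i^{(0)} := Z_i$; while $m_\eps(Z_i^{(k)}) > \del$, pick $x_{i,k+1} \in X_i$ with $\|Z_i^{(k)}\|(\B{x_{i,k+1}}{\eps}) > \del$ and, via Lemma~\ref{lem:seq-slice} applied with $\pi_i := d(\cdot,x_{i,k+1})$, pass to a subsequence and fix $s_{k+1} \in (\eps, 2\eps)$ so that the boundary slices $R_i^{(k+1)} := \d(Z_i^{(k)} \on \B{x_{i,k+1}}{s_{k+1}}) \in \bZ_{n-1,\cs}(X_i)$ satisfy $\sup_i \M(R_i^{(k+1)}) < \infty$ and $\W(R_i^{(k+1)}) \to 0$. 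By the inductive hypothesis and Theorem~\ref{thm:plateau}, I take mass-minimizing fillings $W_i^{(k+1)}$ of $R_i^{(k+1)}$ with $\M(W_i^{(k+1)}) \to 0$ and $\spt(W_i^{(k+1)}) \sub \B{x_{i,k+1}}{3\eps}$ for large $i$. Then $Y_i^{(k+1)} := Z_i^{(k)} \on \B{x_{i,k+1}}{s_{k+1}} - W_i^{(k+1)}$ is an $n$-cycle supported in $\B{x_{i,k+1}}{3\eps}$, and the cone inequality $(\CI_n)[c]$ produces a filling $U_i^{(k+1)} \in \bI_{n+1,\cs}(X_i)$ of $Y_i^{(k+1)}$ with $\M(U_i^{(k+1)}) \le 3c\eps\,\M(Y_i^{(k+1)})$. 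Set $Z_i^{(k+1)} := Z_i^{(k)} - Y_i^{(k+1)}$.

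Each step reduces the mass by at least $\del - \M(W_i^{(k+1)}) \ge \del/2$ for large $i$, so the procedure terminates after $N \le \lceil 4M/\del \rceil$ iterations with $m_\eps(Z_i^{(N)}) \le \del$ and $\M(Z_i^{(N)}) \le M + o(1)$. A telescoping estimate yields $\sum_{k=1}^N \M(Y_i^{(k)}) \le M + o(1)$, so $\sum_k \M(U_i^{(k)}) \le 3c\eps M + o(1) < \nu/3$ for large $i$; Theorem~\ref{thm:thin-cycles} supplies a filling of $Z_i^{(N)}$ of mass $< \nu/3$, and summing gives a filling of $Z_i = Z_i^{(N)} + \sum_k Y_i^{(k)}$ of total mass $< \nu$. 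The main technical obstacle will be the careful bookkeeping across the $N$ nested subsequence extractions, and in particular verifying that both $\W(Z_i^{(k)}) \to 0$ and $\M(Z_i^{(k)})$ remain uniformly controlled throughout the iteration; both follow because at each step $Z_i^{(k)}$ is modified by a current of $\W$-size and mass-size $o(1)$, which suffices since $N$ is bounded independent of $i$.
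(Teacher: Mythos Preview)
Your proposal is correct and follows essentially the same approach as the paper's own proof: induction on $n$ with the base case handled by Proposition~\ref{prop:w-fillvol-0}, and the inductive step carried out by an iterative peeling of heavy $\eps$-balls via Lemma~\ref{lem:seq-slice} and the induction hypothesis, filling the peeled pieces with the cone inequality and the thin remainder with Theorem~\ref{thm:thin-cycles}. The paper's bookkeeping differs only cosmetically---it frames the reduction as finding a single index $j$ rather than a subsequence, and bounds $\sum_k \M(S_i^k) \le 3M$ rather than $M + o(1)$---but the structure, the key lemmas invoked, and the termination argument via the $\del/2$ mass drop are the same.
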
  

\begin{proof}
The proof is by induction on $n$. For $n = 0$, the result holds by
Proposition~\ref{prop:w-fillvol-0}.
Assume now that $n \ge 1$ and the assertion holds in dimension $n-1$.
It suffices to show that for every sequence $(Z_i)_{i \in \N}$ as in the
statement and for every $\nu > 0$ there is an index $j$ with
$\FillVol_{X_j}(Z_j) < \nu$.
By Theorem~\ref{thm:eii} there is a constant $\gam = \gam(n,c)$ such that
every $X_i$ satisfies $(\EII_n)[\gam]$ and, if $n \ge 2$, also
$(\EII_{n-1})[\gam]$. Put $M := \sup_i\M(Z_i)$ and choose $\rho > 0$ such that
\[
  18c\rho M < \nu.
\]
Let $\del := \del(n,\gam,M,\rho,\nu/2)$
be the constant from Theorem~\ref{thm:thin-cycles}.
If there is an index $j$ with $m_\rho(Z_j) \le \del$, then
$\FillVol_{X_j}(Z_j) < \nu/2$ and we are done.

Suppose now that $m_\rho(Z_i) > \del$ for all~$i$.
Choose points $x_i \in X_i$ such that
\[
  \|Z_i\|(\B{x_i}{\rho}) \ge \del.
\]
By Lemma~\ref{lem:seq-slice} there is an $s \in (\rho,2\rho)$ and an infinite
set $I_1 \sub \N$ such that $Z_{i,s} := Z_i \on \B{x_i}{s} \in \bI_{n,\cs}(X_i)$
for all $i \in I_1$ and
\[
  \sup_{i \in I_1} \M(\d Z_{i,s}) < \infty,
  \quad \W(\d Z_{i,s}) \le \W(Z_{i,s}) \to 0 \quad (I_1 \ni i \to \infty).
\]
By the induction assumption there exist fillings
$T_i \in \bI_{n,\cs}(X_i)$ of $\d Z_{i,s}$ such that $\M(T_i) \to 0$.
By reducing the index set further, if necessary, we arrange that
$\M(T_i) \le \del/2$ and $\spt(T_i) \sub \B{x_i}{3\rho}$
for all $i \in I_1$ (Theorem~\ref{thm:plateau}).
Then $S_i^1 := Z_{i,s} - T_i$ is a cycle with support in $\B{x_i}{3\rho}$, and
\[
  \FillVol_{X_i}(S_i^1) \le 3c\rho\,\M(S_i^1)
\]
by the cone inequality. Let $Z_i^1 := Z_i - Z_{i,s} + T_i$ and consider the
splitting
\[
  Z_i = S_i^1 + Z_i^1.
\]
Notice that $\M(S_i^1) \le \M(Z_{i,s}) + \del/2$ and
$\M(Z_i^1) \le \M(Z_i) - \M(Z_{i,s}) + \del/2$,
moreover $\M(Z_{i,s}) \ge \|Z_i\|(\B{x_i}{\rho}) \ge \del$.
Hence, for all $i \in I_1$, we have
\begin{align*}
  \M(S_i^1) &\le \M(Z_i) - \M(Z_i^1) + \del, \\
  \M(Z_i^1) &\le \M(Z_i) - \frac{\del}{2} \le M.
\end{align*}
Note further that 
$\W(Z_i^1) \le \W(Z_i) + \W(Z_{i,s}) + \M(T_i) \to 0$ as $i \to \infty$.

If $m_\rho(Z_i^1) > \del$ for all $i \in I_1$, then we repeat the above
argument (with the same constants $M,\rho,\del$) and produce similar
splittings $Z_i^1 = S_i^2 + Z_i^2$ for all~$i$ in an infinite set
$I_2 \sub I_1$. If $m_\rho(Z_i^2) > \del$ for all $i \in I_2$, we iterate
this step, and continue in this manner. This eventually
yields an infinite set $I_k \sub \N$, for some $k \ge 1$, and a decomposition
\[
  Z_i = S_i^1 + \ldots + S_i^k + Z_i^k
\]
for every $i \in I_k$,
such that $m_\rho(Z_j^k) \le \del$ for some $j \in I_k$. 
In fact, $k \le 2M/\del$, because
$\M(Z_i^k) \le \M(Z_i) - k \del/2$ for all $i \in I_k$.
It follows that
\begin{gather*}
  \M(S_i^1) + \ldots + \M(S_i^k) \le \M(Z_i) - \M(Z_i^k) + k\del \le 3M, \\
  \FillVol_{X_i}(S_i^1) + \ldots + \FillVol_{X_i}(S_i^k) \le 9c\rho M
  < \frac{\nu}{2},
\end{gather*}
and $\FillVol_{X_j}(Z_j^k) < \nu/2$ by
Theorem~\ref{thm:thin-cycles}. Hence, $\FillVol_{X_j}(Z_j) < \nu$.
\end{proof}

The desired result for sequences of cycles with supports in a fixed
compact set now follows easily.

\begin{theorem} \label{thm:weak-flat}
Let $X$ be a proper metric space satisfying condition $(\CI_n)$ for some
$n \ge 0$. If an $\M$-bounded sequence of cycles $Z_i \in \bZ_{n,\cs}(X)$
with supports in a fixed compact set $K \sub X$ converges weakly to $0$,
then $\W(Z_i) \to 0$ and $\FillVol_X(Z_i) \to 0$.  
\end{theorem}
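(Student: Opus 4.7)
The plan is to establish $\W(Z_i) \to 0$ first, and then to invoke Theorem~\ref{thm:w-fillvol} (with $X_i := X$ for every $i$) to conclude $\FillVol_X(Z_i) \to 0$. The second step is then immediate; all the work lies in proving $\W(Z_i) \to 0$.

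For the first part I would combine Arzel\`a--Ascoli with an equicontinuity estimate stemming from the cycle condition. Fix $x_0 \in K$. Since axiom~(3) in the definition of a current makes $Z_i$ insensitive to additive constants in any of $f_1,\ldots,f_n$, the sup defining $\W(Z_i)$ can be restricted to tuples $(f_0,\ldots,f_n)$ with $|f_0| \le 1$, all $f_j$ being $1$-Lipschitz, and $f_j(x_0) = 0$ for $j \ge 1$. Restricted to $K$, these tuples are uniformly bounded and equi-Lipschitz, so by Arzel\`a--Ascoli their restrictions form a relatively compact subset $\cF_K$ of $C(K)^{n+1}$.

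The main step is then equicontinuity of $\{Z_i\}$ on $\cF_K$ with respect to $\|\cdot\|_{\infty,K}$. For $n \ge 1$, the Leibniz rule $d(f_0 f_1) = f_0\,df_1 + f_1\,df_0$ combined with $\d Z_i = 0$ yields the swap identity
\[
  Z_i(f_0,f_1,f_2,\ldots,f_n) = -Z_i(f_1,f_0,f_2,\ldots,f_n),
\]
and together with antisymmetry in the last $n$ arguments this lets one move any $f_j$ with $j \ge 1$ into the first slot. The mass inequality applied to the resulting tuple gives
\[
  |Z_i(f_0,\ldots,f_j,\ldots,f_n) - Z_i(f_0,\ldots,f_j',\ldots,f_n)| \le \|f_j - f_j'\|_{\infty,K}\,\M(Z_i),
\]
while the $j = 0$ bound is direct from the mass inequality. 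Telescoping over the $n+1$ coordinates yields equicontinuity on $\cF_K$ with constant $(n+1)M$, where $M := \sup_i\M(Z_i)$. Pointwise convergence $Z_i(f_0,\ldots,f_n) \to 0$ on each fixed tuple in $\cF_K$ follows from weak convergence after multiplying $f_0$ by a compactly supported Lipschitz cutoff equal to $1$ on $K$ (this leaves $Z_i(f_0,\ldots,f_n)$ unchanged because $\spt(Z_i) \sub K$). Equicontinuity together with pointwise convergence on the relatively compact $\cF_K$ upgrades this to uniform convergence, so $\W(Z_i) \to 0$. The case $n = 0$ is analogous but simpler, as a $0$-cycle with support in $K$ is a finite signed sum of point masses in $K$.

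The principal obstacle is the equicontinuity estimate: without $\d Z_i = 0$, the naive mass inequality controls $|Z_i(f_0,\ldots,f_j - f_j',\ldots,f_n)|$ only through $\Lip(f_j - f_j')$, which does not vanish as $f_j' \to f_j$ uniformly. It is exactly the cycle property that enables the swap and reduces the relevant perturbation to its $C^0(K)$-norm.
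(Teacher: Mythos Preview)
Your proof is correct and follows essentially the same approach as the paper: Arzel\`a--Ascoli compactness of the test-function space together with the equicontinuity estimate (your swap identity for cycles is exactly the mechanism behind Lemma~5.2 in~\cite{Lan-LC}, which the paper invokes) gives $\W(Z_i) \to 0$, after which Theorem~\ref{thm:w-fillvol} finishes. The paper phrases the compactness step via a finite $\eps$-net rather than uniform convergence on a relatively compact set, but this is the same argument.
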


\begin{proof}
Suppose that $\M(Z_i) \le M$ for all $i$.
Let $\cF$ denote the collection of all $1$-Lipschitz functions
$f \colon X \to \R$ with $|f| \le \diam(K)/2$. Let $\eps > 0$.
There is a finite subcollection $\cG \sub \cF$ such that for all
$f_0,\ldots,f_n \in \cF$ there exist $g_0,\ldots,g_n \in \cG$ with
$\sup_{x \in K}|f_k(x) - g_k(x)| \le \eps/M$ for $k = 0,\ldots,n$; then
\[
  |Z_i(f_0,\ldots,f_n) - Z_i(g_0,\ldots,g_n)| \le (n+1)\eps
\]
for all $i$ by Lemma~5.2 in~\cite{Lan-LC}.
As $Z_i \to 0$ weakly, if $i$ is sufficiently large, then
$Z_i(g_0,\ldots,g_n) \le \eps$ for all tuples $(g_0,\ldots,g_n) \in \cG^{n+1}$,
thus $Z_i(f_0,\ldots,f_n) \le (n+2)\eps$ whenever $f_0,\ldots,f_n \in \cF$.
Hence $\W(Z_i) \to 0$, and $\FillVol_X(Z_i) \to 0$ by
Theorem~\ref{thm:w-fillvol}.
\end{proof}  


\section{Isoperimetric inequalities} \label{sect:ii}

This section is devoted to isoperimetric inequalities and shows
in particular the implications
$(\AR_n) \Rightarrow (\LII_n) \Leftrightarrow (\SII_n)$
in Theorem~\ref{thm:main}.
In fact we prove some stronger uniform statements. To this end we first
extend the notion of asymptotic rank to sequences of metric spaces
$X_i = (X_i,d_i)$.
A compact metric space $\Om$ is called an {\em asymptotic subset\/}
of the sequence $(X_i)_{i \in \N}$ if there exist positive numbers
$r_i \to \infty$ and subsets $Y_i \sub X_i$ such that the rescaled sets
$(Y_i,r_i^{-1}d_i)$ converge to~$\Om$ in the Gromov--Hausdorff topology.
The {\em asymptotic rank\/} of the sequence $(X_i)$ is the supremum of all
$k \ge 0$ such that there exists an asymptotic subset bi-Lipschitz
homeomorphic to a compact subset of $\R^k$ with positive Lebesgue measure.
The asymptotic rank of a single metric space $X$,
as defined in the introduction, equals the asymptotic rank
of the constant sequence $X_i = X$ (see Definition~1.1 and Proposition~3.1
in~\cite{Wen-AR}).

From now on, throughout this section, we assume that $\cX$ is a class of 
proper metric spaces such that for some $n \ge 1$ and $c > 0$,
all members of $\cX$ satisfy $(\CI_n)[c]$, and every sequence
$(X_i)_{i \in \N}$ in $\cX$ has asymptotic rank $\le n$.
The sub-Euclidean isoperimetric inequality for $n$-cycles in spaces
of asymptotic rank at most $n$ was established in greater generality
in~\cite{Wen-AR} (Theorem~1.2), and a slightly restricted version was used
as a key tool in~\cite{KleL} (Theorem~4.4). First we give a proof
of a uniform version of the latter statement. 

\begin{theorem} \label{thm:sii1}  
For all $C,\eps > 0$ there is a constant $\rho_0 = \rho_0(\cX,n,c,C,\eps) > 0$
such that if $X$ belongs to $\cX$, and $Z \in \bZ_{n,\cs}(X)$ satisfies
$\M(Z) \le C r^n$ and $\spt(Z) \sub \B{p}{r}$ for some $p \in X$
and $r \ge \rho_0$, then $\FillVol_X(Z) < \eps r^{n+1}$.
\end{theorem}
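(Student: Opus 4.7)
The plan is to argue by contradiction via an ultralimit construction, combining the variational approximation of Proposition \ref{prop:approx} with Kirchheim's metric rectifiability theorem to exploit the asymptotic rank hypothesis.

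Suppose the assertion fails for some $C,\eps > 0$. Then there exist $X_i \in \cX$, base points $p_i \in X_i$, radii $r_i \to \infty$, and cycles $Z_i \in \bZ_{n,\cs}(X_i)$ with $\spt(Z_i) \subset \B{p_i}{r_i}$ and $\M(Z_i) \le C r_i^n$, yet $\FillVol_{X_i}(Z_i) \ge \eps r_i^{n+1}$. Rescaling by $r_i^{-1}$ produces proper metric spaces $\tilde X_i$ still satisfying $(\CI_n)[c]$, and therefore $(\EII_n)[\gam]$ with $\gam = \gam(n,c)$ by Theorem \ref{thm:eii}, together with cycles $\tilde Z_i$ obeying $\spt(\tilde Z_i) \subset \B{p_i}{1}$, $\M(\tilde Z_i) \le C$, and $\FillVol_{\tilde X_i}(\tilde Z_i) \ge \eps$. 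Fix $\eta = \eta(n,\gam,C,\eps) > 0$ with $\eta C < \eps/2$ and apply Proposition \ref{prop:approx} to each $\tilde Z_i$ to obtain a correction $\tilde V_i$ and a cycle $\tilde Z_i' := \tilde Z_i - \d \tilde V_i$ satisfying $\M(\tilde V_i) \le \eta C$, $\M(\tilde Z_i') \le C$, and the uniform lower density bound $\D_{x,r}(\tilde Z_i') \ge \theta$ for $x \in \spt(\tilde Z_i')$ and $r \in (0,\alpha\eta]$; in particular $\FillVol_{\tilde X_i}(\tilde Z_i') \ge \eps/2$. Parts (4) and (5) of Proposition \ref{prop:approx} further confine $\spt(\tilde Z_i')$ to a ball $\B{p_i}{D}$ of radius $D = D(n,\gam,C,\eps)$ and supply a uniform upper density bound $\|\tilde Z_i'\|(\B{p_i}{s}) \le 2^{n+1}Cs^n$ at scales above a fixed threshold.

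A pointed Gromov--Hausdorff (or ultralimit) construction then produces, after passing to a subsequence, a proper pointed metric space $(X_\omega,p_\omega)$ that still satisfies $(\CI_n)[c]$, together with a limit cycle $Z_\omega \in \bZ_{n,\cs}(X_\omega)$ supported in $\B{p_\omega}{D}$. The uniform upper density bound prevents escape of mass, so the lower density bound $\theta$ persists in the limit via the Portmanteau inequality on closed balls, whence $Z_\omega \neq 0$. Moreover, by construction every compact subset $K \subset X_\omega$ is a pointed Gromov--Hausdorff limit of rescaled bounded subsets of the $X_i$ with the scale factors $r_i \to \infty$, and is therefore an asymptotic subset of the sequence $(X_i)$. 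The hypothesis on $\cX$ consequently forbids any compact subset of $X_\omega$ from being bi-Lipschitz homeomorphic to a subset of $\R^{n+1}$ of positive Lebesgue measure.

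Since $X_\omega$ satisfies $(\EII_n)[\gam]$, Theorem \ref{thm:plateau} supplies a mass-minimizing filling $V_\omega \in \bI_{n+1,\cs}(X_\omega)$ of $Z_\omega$; because $Z_\omega \neq 0$, this forces $\M(V_\omega) > 0$. As $V_\omega$ is integer rectifiable with positive mass, its support is countably $(n+1)$-rectifiable with positive $\mathcal{H}^{n+1}$-measure, and Kirchheim's metric rectifiability theorem extracts a compact subset of $X_\omega$ with positive $\mathcal{H}^{n+1}$-measure that is bi-Lipschitz homeomorphic to a subset of $\R^{n+1}$ of positive Lebesgue measure. This contradicts the asymptotic rank hypothesis, completing the proof. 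The main obstacle in this plan is the ultralimit bookkeeping: establishing propagation of $(\CI_n)$ and of the lower and upper density bounds to the limit cycle and limit space, and identifying compact subsets of $X_\omega$ as asymptotic subsets of $(X_i)$.
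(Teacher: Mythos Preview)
Your overall strategy---contradiction, rescaling, applying Proposition~\ref{prop:approx}, and passing to a limit---matches the paper's. The decisive gap is the claim that the limit cycle $Z_\omega$ is nonzero. The lower density bound $\D_{x,r}(\tilde Z_i') \ge \theta$ controls the \emph{mass measures} $\|\tilde Z_i'\|$, but mass is only lower semicontinuous under weak convergence of currents: one has $\|Z_\omega\|(U) \le \liminf_i \|\tilde Z_i'\|(U)$ for open $U$, never the reverse inequality you would need. Your Portmanteau argument establishes at best that some subsequential weak limit of the \emph{measures} $\|\tilde Z_i'\|$ is nonzero, which says nothing about $\|Z_\omega\|$; currents can oscillate and cancel in the limit while their masses persist. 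And in fact this is exactly what happens here. The paper embeds the supports of minimizing fillings $V_i'$ of $\tilde Z_i'$ (taken in the original spaces) into a common compact space $Y$ via Gromov compactness, passes to a weak limit $V$ of $\phi_{i\#}V_i'$, and then applies the very rectifiability argument you invoke to force $V = 0$, since $\spt(V)$ lies in an asymptotic subset of the sequence $(X_i)$. Consequently $\phi_{i\#}\tilde Z_i' = \d(\phi_{i\#}V_i') \to 0$ weakly, so $Z_\omega = 0$ and there is nothing to fill in the limit.

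This is precisely why the paper cannot stop at that point and needs the machinery of Section~\ref{sect:conv}: from $\phi_{i\#}\tilde Z_i' \to 0$ weakly one invokes Theorem~\ref{thm:weak-flat} to obtain $\W(\tilde Z_i') \to 0$, and then Theorem~\ref{thm:w-fillvol}---applied back in the rescaled spaces $\tilde X_i$, which genuinely satisfy $(\CI_n)[c]$---to conclude $\FillVol_{\tilde X_i}(\tilde Z_i') \to 0$, contradicting the lower bound $\eps/2$. A secondary but real issue in your plan is the assertion that the limit space $X_\omega$ inherits $(\CI_n)[c]$; cone inequalities for integral currents are not known to pass to Gromov--Hausdorff or ultralimits, and the paper sidesteps this by taking the fillings $V_i'$ in the original spaces and, when $(\CI_n)$ is needed for Theorem~\ref{thm:weak-flat}, replacing the compact container $Y$ by its injective hull.
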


\begin{proof}
Suppose to the contrary that there exist $C,\eps > 0$, 
a sequence of positive radii $(r_i)_{i \in \N}$ tending to infinity,
and cycles $Z_i \in \bZ_{n,\cs}(X_i)$, where $X_i = (X_i,d_i)$ belongs to $\cX$,
each with mass $\M(Z_i) \le C r_i^{\,n}$ and support in some ball $\B{p_i}{r_i}$,
such that
\[
  \FillVol_{X_i}(Z_i) \ge \eps r_i^{\,n+1}.
\]
By Theorem~\ref{thm:eii} there is a constant $\gam = \gam(n,c)$ such that
every $X_i$ satisfies $(\EII_n)[\gam]$ and, if $n \ge 2$,
also $(\EII_{n-1})[\gam]$.
Put $\eta_i := \eps r_i/(2C)$ and apply Proposition~\ref{prop:approx}
to $Z_i$ to get $V_i \in \bI_{n+1,\cs}(X_i)$ and $Z_i' := Z_i - \d V_i$ such that
\begin{enumerate}
\item
  $\M(Z_i') \le C r_i^{\,n}$ and
  $\M(V_i) \le \eta_i C r_i^{\,n} = \eps r_i^{\,n+1}/2$; 
\item
  $\D_{x,r}(Z_i') \ge \theta$ whenever $x \in \spt(Z_i')$ and
  $0 < r \le \alpha\eta_i = \alpha\eps r_i/(2C)$, where $\alpha,\theta$
  depend only on $n,\gam$;
\item
  $\spt(Z_i') \sub \B{p_i}{\lam r_i}$ for some constant $\lam > 1$ depending
  only on $n,\gam,C,\eps$ (this uses part~(4) of
  Proposition~\ref{prop:approx}).
\end{enumerate}
By (1), (3) and the coning inequality $(\CI_n)[c]$ there exists a filling
$V_i' \in \bI_{n+1,\cs}(X_i)$ of $Z_i'$ with mass
\[
  \M(V_i') \le c\lam r_i\,\M(Z_i') \le c\lam C r_i^{\,n+1}.
\]
By Theorem~\ref{thm:plateau} we can assume that $V_i'$ is minimizing and has
support in $\B{p_i}{\lam'r_i}$ for some $\lam' > \lam$ independent of $i$.
Let $Y_i$ denote the set $\spt(V_i')$ equipped with the metric induced
by $r_i^{-1}d_i$. Note that $\M(Z_i') \le C$ and $\M(V_i') \le c\lam C$
with respect to this metric, and $Y_i$ has diameter at most $2\lam'$.
It follows from~(2) and the lower density bound for $V_i'$
that the family of all $Y_i$ is uniformly precompact.
By Gromov's compactness theorem~\cite{Gro-PG}, after passage to subsequences
and relabelling, there exist a compact metric space $Y$ and
isometric embeddings $\phi_i \colon Y_i \to Y$ 
such that the images $\phi_i(Y_i)$ converge to some compact set 
$\Om \sub Y$ in the Hausdorff distance. By Theorem~\ref{thm:cptness}
we can further assume that the push-forwards
$\phi_{i\#}V_i' \in \bI_{n+1,\cs}(Y)$ converge weakly to a current
$V \in \bI_{n+1,\cs}(Y)$. Evidently $\spt(V) \sub \Om$. 
Since the (sub)sequence $(X_i)_{i \in \N}$ has asymptotic rank $\le n$,
and $\Om$ is an asymptotic subset, it follows that there is no bi-Lipschitz
embedding of a compact subset of $\R^{n+1}$ with positive Lebesgue measure
into $\Om$, and therefore $V$ must be zero
(compare Theorem~8.3 in~\cite{Lan-LC} and the comments thereafter).
Hence, the cycles $\phi_{i\#}Z_i' \in \bZ_{n,\cs}(Y)$ 
converge weakly to $\d V = 0$. We can assume that $Y$ satisfies
condition $(\CI_n)$; for example, the injective hull of $Y$ admits a conical
geodesic bicombing~\cite{Lan-IH} and is still compact.
It then follows from Theorem~\ref{thm:weak-flat} that
$\W(\phi_{i\#}Z_i') \to 0$. As this is an intrinsic property of the currents, 
we conclude that $\W(Z_i') \to 0$ with respect to the metrics
$r_i^{-1}d_i$. However, it follows from the inequality
$\FillVol_{X_i}(Z_i) \ge \eps r_i^{\,n+1}$ and~(1) that
$\FillVol_{X_i}(Z_i') \ge \eps/2$ with respect to $r_i^{-1}d_i$.
This contradicts Theorem~\ref{thm:w-fillvol}
(note that $(X_i,r_i^{-1}d_i)$ still satisfies $(\CI_n)[c]$).
\end{proof}

As a first application of Theorem~\ref{thm:sii1} we derive a 
density bound for minimizing fillings. This is similar to assertion~(5)
of Proposition~\ref{prop:approx} and to Proposition~4.5 in~\cite{KleL}.

\begin{proposition} \label{prop:min-fill}
For all\/ $C,\del > 0$ there is a $\rho = \rho(\cX,n,c,C,\del) > 0$
such that if $X$ belongs to $\cX$, and $Z \in \bZ_{n,\cs}(X)$ is a cycle with
$\D_{p,r}(Z) \le C$ for some $p \in X$ and for all $r > a \ge 0$,
then every minimizing filling $V \in \bI_{n+1,\cs}(X)$ of $Z$ satisfies
\[
  \D_{p,r}(V) = \frac{\|V\|(\B{p}{r})}{r^{n+1}} < \del
\]
for all $r > \max\{\rho,a\}$.
\end{proposition}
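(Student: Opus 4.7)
The plan is to combine the mass-minimality of $V$ with the sub-Euclidean isoperimetric inequality of Theorem~\ref{thm:sii1} via a dyadic slicing dichotomy at scales $r_k := 2^k r$. I would fix $\eps := \min\{\del\,2^{-n-1},\,4^{-n-1}\}$ and take $\rho := \rho_0(\cX,n,c,C+1,\eps)$, where $\rho_0$ is the constant from Theorem~\ref{thm:sii1}. Setting $g(s) := \|V\|(\B{p}{s})$, for each $k \ge 0$ and almost every $s \in (r_k, r_{k+1})$ I would slice $V$ via the radial distance from $p$ to obtain an integral cycle $T_s \in \bI_{n,\cs}(X)$ with $\d(V \on \B{p}{s}) = T_s + Z \on \B{p}{s}$, so that the closed-up piece $U_s := T_s + Z \on \B{p}{s}$ is supported in $\B{p}{s}$ with $\M(U_s) \le \M(T_s) + Cs^n$ (using $s > a$). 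Comparing $V$ with the competitor $V - V\on\B{p}{s} + W$ for any filling $W$ of $U_s$ then yields the basic minimality inequality $g(s) \le \FillVol_X(U_s)$, while coarea provides $\int_{r_k}^{r_{k+1}} \M(T_s)\,ds \le g(r_{k+1})$.

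The key step is the following dichotomy at each level $k$. Either $(\mathrm A_k)$ some $s \in (r_k,r_{k+1})$ satisfies $\M(T_s) \le s^n$, in which case $\M(U_s) \le (C+1)s^n$, and Theorem~\ref{thm:sii1} (applicable because $s > r \ge \rho$) together with the above minimality inequality gives $g(r_k) \le g(s) < \eps s^{n+1} \le 2^{n+1}\eps\,r_k^{\,n+1}$; or $(\mathrm B_k)$ no such $s$ exists, i.e.\ $\M(T_s) > s^n \ge r_k^{\,n}$ for almost every $s \in (r_k,r_{k+1})$, so coarea forces $g(r_{k+1}) > r_k^{\,n+1} = r_{k+1}^{\,n+1}/2^{n+1}$. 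The calibration $\eps \le 4^{-n-1}$ is precisely what makes $(\mathrm A_k)$ and $(\mathrm B_{k-1})$ incompatible: the former gives density $<2^{n+1}\eps\le 2^{-n-1}$ at $r_k$, while the latter demands density $>2^{-n-1}$ there.

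The endgame is then a propagation argument. If $(\mathrm A_0)$ holds, $g(r) < 2^{n+1}\eps\,r^{n+1} \le \del\,r^{n+1}$ and we are done. Otherwise $(\mathrm B_0)$ holds, and the incompatibility forces $(\mathrm B_k)$ for every $k \ge 1$, giving $g(r_k) > r_k^{\,n+1}/2^{n+1} \to \infty$ and contradicting $g \le \M(V) < \infty$. I expect the main obstacle to be exactly this calibration: the good-case density upper bound must sit strictly below the bad-case lower bound, since this is what lets the cascade collide with the trivial bound $g \le \M(V)$, which is the only place where $V \in \bI_{n+1,\cs}(X)$, as opposed to $\bI_{n+1,\loc}(X)$, is used.
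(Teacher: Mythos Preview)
Your proof is correct and follows essentially the same approach as the paper: a dyadic slicing argument that combines the minimality of $V$ (so that $\|V\|(\B{p}{s}) \le \FillVol_X(U_s)$) with Theorem~\ref{thm:sii1} applied to the closed-up slice cycle $U_s$. The only cosmetic difference is the direction of the iteration --- the paper starts at a large scale $r_0$ where $\|V\|(\B{p}{r_0}) \le \M(V) < \del r_0^{\,n+1}$ holds trivially and propagates the density bound \emph{downward} to the target radius, while you ascend from $r$ via your dichotomy and reach the same contradiction with $\M(V) < \infty$; the underlying mechanism (and the constants produced) are essentially the same.
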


\begin{proof}
Let $V \in \bI_{n+1,\cs}(X)$ be a minimizing filling of $Z$, and set
$B_r := \B{p}{r}$ for all $r > 0$.
Choose a sufficiently large radius $r_0 > 0$ such that
\[
\del r_0^{\,n+1} > \M(V) \ge \|V\|(B_{r_0}),
\]
and put $r_i := 2^{-i}r_0$ for every integer $i \ge 1$.
There exists an $s \in (r_1,r_0)$ such that the slice
$T_s := \d(V \on B_s) - Z \on B_s$ is in $\bI_{n,\cs}(X)$ and has mass
$\M(T_s) \le \|V\|(B_{r_0})/(r_0 - r_1) < 2\del r_0^{\,n}$.
Furthermore, if $r_1 > a$, then $\M(Z \on B_s) \le Cs^n$ by assumption,
thus the cycle $Z_s := Z \on B_s + T_s$ satisfies
\[
\M(Z_s) \le Cs^n + 2\del r_0^{\,n} \le (C + 2\del)r_0^{\,n},
\]
and $\spt(Z_s) \sub B_{r_0}$. Note that $V \on B_s$ is a minimizing filling
of $Z_s$. By Theorem~\ref{thm:sii1} there is a constant
$\rho := 2^{-1}\rho_0(\cX,n,c,C+2\del,2^{-(n+1)}\del) > 0$ such that
if $r_1 > \max\{\rho,a\}$ and, hence, $r_0 \ge 2\rho$, then 
\[
\|V\|(B_{r_1}) \le \M(V \on B_s) < 2^{-(n+1)} \del r_0^{\,n+1} = \del r_1^{\,n+1}.
\]
Finally, given any $r > \max\{\rho,a\}$, we can choose
$r_0$ initially such that $r = r_k = 2^{-k}r_0$ for some $k \ge 1$.  
When $k \ge 2$, we repeat the above slicing argument 
successively for $i = 2,\dots,k$, with $(r_i,r_{i-1})$ in place of $(r_1,r_0)$.
This eventually shows that 
\[
\|V\|(B_r) < \del r^{n+1}
\]
for all $r > \max\{\rho,a\}$.
\end{proof}  

Next we prove a linear isoperimetric inequality for cycles
with controlled density. This yields the implication
$(\AR_n) \Rightarrow (\LII_n)$ in Theorem~\ref{thm:main}.

\begin{theorem}[linear isoperimetric inequality] \label{thm:lii}
There is a constant $\nu = \nu(n,c) > 0$, and for all $C > 0$ there is
a $\lam = \lam(\cX,n,c,C) > 0$,
such that if $X$ belongs to $\cX$ and $Z \in \bZ_{n,\cs}(X)$
is a cycle with $(C,a)$-controlled density, $a \ge 0$,
then $\FillVol_{X}(Z) \le \max\{\lam,\nu a\}\,\M(Z)$.
\end{theorem}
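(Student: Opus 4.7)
The plan is to combine three ingredients: the variational approximation of Proposition~\ref{prop:approx}, the minimizing-filling density bound of Proposition~\ref{prop:min-fill}, and the lower density bound $\del_0$ of Theorem~\ref{thm:plateau}. The key idea is that if one takes a mass-minimizing filling $V$ of a cycle whose upper density is controlled at scales beyond some threshold $\sim\eta$, then Proposition~\ref{prop:min-fill} pushes the upper density of $V$ below $\del_0$ at those scales, while Theorem~\ref{thm:plateau} forces the lower density of $V$ to be at least $\del_0$ away from $\spt(\d V)$. These two bounds are incompatible unless $\spt(V)$ stays within distance $\sim\eta$ of $\spt(\d V)$, which confines the filling enough that a covering argument gives a mass bound linear in the mass of the cycle.

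Concretely, I would let $\gam = \gam(n,c)$ from Theorem~\ref{thm:eii}, $\del_0 = \del_0(n,\gam)$ from Theorem~\ref{thm:plateau}, fix $\del := \del_0/2$, let $\rho' := \rho(\cX,n,c,2^{n+1}C,\del)$ from Proposition~\ref{prop:min-fill}, and set $\eta^* := \rho'/2^{n+1}$ and $\eta := \max\{\eta^*,a\}$, so that $\max\{a,2^{n+1}\eta\} = 2^{n+1}\eta$. First I would apply Proposition~\ref{prop:approx} to $Z$ at scale $\eta$ to obtain $V_1$ and $Z' := Z - \d V_1$ with $\M(V_1) \le \eta\,\M(Z)$, with lower density $\D_{x,r}(Z') \ge \theta$ for $r \in (0,\alpha\eta]$, and---invoking part~(5) at every $p$, which is legitimate since $Z$ has $(C,a)$-controlled density globally---with $\D_{p,r}(Z') \le 2^{n+1}C$ for every $p$ and every $r > 2^{n+1}\eta$. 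If $Z' = 0$ the claim is immediate, so I assume otherwise and take a mass-minimizing filling $V$ of $Z'$ via Theorem~\ref{thm:plateau}. Proposition~\ref{prop:min-fill} then yields $\D_{p,r}(V) < \del$ for every $p$ and every $r > 2^{n+1}\eta$. For $x \in \spt(V)$ with $d := d(x,\spt(Z')) > 2^{n+1}\eta$, any $r \in (2^{n+1}\eta,d)$ satisfies $\B{x}{r} \cap \spt(\d V) = \es$, so Theorem~\ref{thm:plateau} gives $\D_{x,r}(V) \ge \del_0 > \del$, a contradiction. Hence $\spt(V)$ lies within distance $2^{n+1}\eta$ of $\spt(Z')$.

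For the final estimate I would pick a maximal $\alpha\eta$-separated set $N \sub \spt(Z')$. The pairwise disjoint balls $\B{x}{\alpha\eta/2}$ with $x \in N$ each carry $Z'$-mass at least $\theta(\alpha\eta/2)^n$, giving $|N| \le 2^n\,\M(Z)/(\theta(\alpha\eta)^n)$; by maximality the balls $\B{x}{R}$ of radius $R := (\alpha+2^{n+1})\eta$ cover $\spt(V)$. Since $R > 2^{n+1}\eta$, summing $\|V\|(\B{x}{R}) \le \del R^{n+1}$ over $x \in N$ yields $\M(V) \le K\eta\,\M(Z)$ with $K := 2^n\del(\alpha+2^{n+1})^{n+1}/(\theta\alpha^n)$, depending only on $n$ and $c$. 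Combining, $\FillVol_X(Z) \le \M(V_1)+\M(V) \le (1+K)\eta\,\M(Z) \le \max\{(1+K)\eta^*,(1+K)a\}\,\M(Z)$, which is the desired inequality with $\nu := 1+K$ depending only on $n,c$ and $\lam := (1+K)\eta^*$ depending on $\cX,n,c,C$. The main calibration to be careful about is the choice of $\eta^*$: it must be at least $\rho'/2^{n+1}$ so that the threshold in Proposition~\ref{prop:min-fill} simplifies to $2^{n+1}\eta$, which in turn makes the product $|N|\,R^{n+1}$ scale as $\eta\,\M(Z)$ rather than a larger power of $\eta$.
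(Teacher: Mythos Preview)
Your proof is correct and follows essentially the same route as the paper's: apply Proposition~\ref{prop:approx} at a scale $\eta$ calibrated so that the controlled-density threshold of $Z'$ and the threshold $\rho'$ from Proposition~\ref{prop:min-fill} both equal $2^{n+1}\eta$, then confine a minimizing filling of $Z'$ to the $2^{n+1}\eta$-neighborhood of $\spt(Z')$ via the clash between the lower density bound $\del_0$ and the upper bound from Proposition~\ref{prop:min-fill}, and finish with a covering by balls centered at a maximal net in $\spt(Z')$. The only differences are cosmetic (you take $\del = \del_0/2$ and an $\alpha\eta$-net rather than a $2\alpha\eta$-net, and bound by $\M(Z)$ where the paper uses $\M(Z')$), none of which affects the argument.
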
  

\begin{proof}
Note again that for some $\gam = \gam(n,c)$, every member of $\cX$ satisfies
$(\EII_n)[\gam]$ and, if $n \ge 2$, also $(\EII_{n-1})[\gam]$.
Suppose that $Z \in \bZ_{n,\cs}(X)$ has $(C,a)$-controlled density.
For any $\eta > 0$, to be specified below, Proposition~\ref{prop:approx}
provides a $V \in \bI_{n+1,\cs}(X)$ such that,
for $Z' := Z - \d V \in \bZ_{n,\cs}(X)$
and some constants $\alpha,\theta > 0$ depending only on $n$ and $\gam$,
\begin{enumerate}
\item
  $\eta^{-1}\M(V) + \M(Z') \le \M(Z)$;
\item
  $\D_{x,r}(Z') \ge \theta$ for all $x \in \spt(Z')$ and $r \in (0,\alpha\eta]$;
\item
  $Z'$ has $(2^{n+1}C,\max\{a,2^{n+1}\eta\})$-controlled density.
\end{enumerate}
By Theorem~\ref{thm:plateau} there exists a minimizing filling
$V' \in \bI_{n+1,\cs}(X)$ of $Z'$, and there is a $\del_0 = \del_0(n,\gam) > 0$
such that $\D_{x,r}(V') \ge \del_0$ whenever $x \in \spt(V')$, $r > 0$,
and $\B{x}{r} \cap \spt(Z') = \es$. On the other hand, by~(3) and
Proposition~\ref{prop:min-fill}, there is a constant
$\rho' := \rho(\cX,n,c,2^{n+1}C,\del_0) > 0$ such that $\D_{p,r}(V') < \del_0$
for all $p \in X$ and $r > \max\{\rho',a,2^{n+1}\eta\}$.
We now fix $\eta$ so that
\[
  2^{n+1}\eta = \max\{\rho',a\}.
\]
Then $\spt(V')$ is within distance at most $2^{n+1}\eta$ from $\spt(Z')$.
Pick a maximal set $N \sub \spt(Z')$ of distinct points at mutual distance 
$> 2\alpha\eta$. The collection of all balls $\B{x}{2\alpha\eta}$ with
$x \in N$ covers $\spt(Z')$, and the corresponding balls with radius
\[
  r := 2(\alpha + 2^n)\eta
\]
cover $\spt(V')$.
Furthermore, the balls $\B{x}{\alpha\eta}$ with $x \in N$ are pairwise
disjoint, and $\|Z'\|(\B{x}{\alpha\eta}) \ge \theta (\alpha\eta)^n$ by~(2).
Hence, $|N| \le \M(Z')/(\theta\alpha^n\eta^n)$, and since $r > 2^{n+1}\eta$,
we have $\|V'\|(\B{x}{r}) < \del_0 r^{n+1}$ for all $x \in N$.
(Possibly $V' = 0$ and $|N| = 0$.) Thus
\[
  \M(V') \le |N| \,\del_0 r^{n+1}
  \le \frac{\del_0 r^{n+1}}{\theta \alpha^n\eta^n}\,\M(Z')
  \le \nu'\eta\,\M(Z')
\]
for some $\nu' = \nu'(n,c) \ge 1$.
Now $V + V'$ is a filling of $Z$ with mass
\[
  \M(V + V') \le \nu'(\M(V) + \eta\,\M(Z')) \le \nu'\eta\,\M(Z)
\]
by~(1). In view of the choice of $\eta$, this gives the result.
\end{proof}

We now turn to the sub-Euclidean isoperimetric inequality as stated in
Theorem~\ref{thm:main}.
The proof below shows that $(\LII_n) \Rightarrow (\SII_n)$.

\begin{theorem}[sub-Euclidean isoperimetric inequality]
\label{thm:sii2}  
For all $\eps > 0$ there is a constant $M_0 = M_0(\cX,n,c,\eps) > 0$
such that if $X$ belongs to $\cX$ and $Z \in \bZ_{n,\cs}(X)$,
then $\FillVol_X(Z) < \eps \max\{M_0,\M(Z)\}^{(n+1)/n}$.
\end{theorem}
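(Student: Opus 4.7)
The plan is to apply Proposition~\ref{prop:approx} to $Z$ in order to produce a cycle $Z' = Z - \d V$ with $\M(V)$ small and with explicit $(C,a)$-controlled density, and then to fill $Z'$ via the linear isoperimetric inequality of Theorem~\ref{thm:lii}. The crucial observation is that the trivial bound $\|Z\|(\B{p}{r}) \le M := \M(Z)$ says that, for any $\del > 0$, the cycle $Z$ itself has $(\del,(M/\del)^{1/n})$-controlled density at every point $p$; one can therefore trade the density constant $\del$ against the coarseness parameter in $(\LII_n)$.

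Concretely, I fix the constants $\nu = \nu(n,c)$ and $\gam = \gam(n,c)$ from Theorems~\ref{thm:lii} and~\ref{thm:eii}, set $\kap := 1 + 2^{-(n+1)}/\nu$ and $\eps'' := \eps/\kap$, and choose $\del := (\nu/\eps'')^n$ together with $\eta := \frac{\eps''}{2^{n+1}\nu}\,M^{1/n}$, calibrated so that $(M/\del)^{1/n} = 2^{n+1}\eta = \frac{\eps''}{\nu}\,M^{1/n}$. Applying Proposition~\ref{prop:approx} at scale $\eta$ produces $V \in \bI_{n+1,\cs}(X)$ with $\M(V) \le \eta M = \frac{\eps''}{2^{n+1}\nu}\,M^{(n+1)/n}$; and invoking part~(5) at every $p \in X$ with $C = \del$ and $a = (M/\del)^{1/n}$ (a hypothesis which is just the trivial bound $\|Z\|(\B{p}{r}) \le M$) shows that $Z' := Z - \d V$ has $\bigl(2^{n+1}\del,\,\frac{\eps''}{\nu}\,M^{1/n}\bigr)$-controlled density, with $\M(Z') \le M$.

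Now Theorem~\ref{thm:lii} applied to $Z'$, with $\lam_0 := \lam(\cX,n,c,2^{n+1}\del)$, gives
\[
  \FillVol_X(Z') \le \max\bigl\{\lam_0,\,\eps''\,M^{1/n}\bigr\}\,\M(Z').
\]
For $M \ge M_0^\ast := (\lam_0/\eps'')^n$ the second term in the max dominates, so $\FillVol_X(Z') \le \eps''\,M^{(n+1)/n}$; combining with the bound on $\M(V)$ yields $\FillVol_X(Z) \le \kap\eps''\,M^{(n+1)/n} = \eps\,M^{(n+1)/n}$, which is what is needed in this regime.

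It remains to absorb $M < M_0^\ast$, where Theorem~\ref{thm:eii} gives $\FillVol_X(Z) \le \gam M^{(n+1)/n} \le \gam(M_0^\ast)^{(n+1)/n}$; this is at most $\eps M_0^{(n+1)/n}$ as soon as $M_0 \ge (\gam/\eps)^{n/(n+1)} M_0^\ast$, so setting $M_0 := \max\bigl\{1,(\gam/\eps)^{n/(n+1)}\bigr\}\,M_0^\ast$ closes the argument. The main challenge here is not conceptual but bookkeeping: one must tune $\eta$ and $\del$ so that the two candidates $\lam_0$ and $\nu a$ inside the $(\LII_n)$ maximum cross below the chosen threshold $M_0^\ast$, which is exactly what is accomplished by taking $\del$ independent of $M$ and $\eta$ proportional to $M^{1/n}$.
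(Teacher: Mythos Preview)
Your argument is correct, and the core idea---observing that the trivial bound $\|Z\|(\B{p}{r}) \le \M(Z)$ already gives $Z$ itself $(\del,(M/\del)^{1/n})$-controlled density for any $\del>0$, and then feeding this into Theorem~\ref{thm:lii}---is exactly the paper's approach. However, you take an unnecessary detour. The paper applies Theorem~\ref{thm:lii} \emph{directly} to $Z$: with $t<\eps/\nu$ fixed and $C:=t^{-n}$, $a:=t\,\M(Z)^{1/n}$, one has $\nu a<\eps\,\M(Z)^{1/n}$, and choosing $M_0$ so that $\lam(\cX,n,c,C)<\eps M_0^{1/n}$ yields
\[
  \FillVol_X(Z)\le\max\{\lam,\nu a\}\,\M(Z)<\eps\max\{M_0,\M(Z)\}^{(n+1)/n}
\]
in a single line, covering both the large-mass and small-mass regimes uniformly. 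Your preliminary pass through Proposition~\ref{prop:approx} is redundant: the proof of Theorem~\ref{thm:lii} already invokes that proposition internally, so you are effectively applying it twice, and the resulting $Z'$ has essentially the same controlled-density profile (up to the factor $2^{n+1}$) as $Z$ itself. Likewise, the separate appeal to $(\EII_n)$ for the small-mass case is avoidable. Nothing is wrong, but the proof can be cut to a third of its length.
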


\begin{proof}
Given $Z \in \bZ_{n,\cs}(X)$, note that if $t > 0$ and $r > t\,\M(Z)^{1/n}$,
then $\M(Z) < t^{-n} r^n$, thus $Z$ has $(t^{-n},t\,\M(Z)^{1/n})$-controlled
density. For $\eps > 0$, let $\nu = \nu(n,c)$ and $\lam = \lam(\cX,n,c,C)$
be the constants from Theorem~\ref{thm:lii}, where now
$C := t^{-n}$ for any fixed $t < \eps/\nu$.
Let $M_0 > 0$ be such that $\lam < \eps M_0^{1/n}$. Then
\[
  \max\{\lam,\nu t\,\M(Z)^{1/n}\}\,\M(Z) < \eps\max\{M_0,\M(Z)\}^{(n+1)/n},
\]
and the result follows from Theorem~\ref{thm:lii}.
\end{proof}  

Finally, we show that Theorem~\ref{thm:sii1} follows easily from
Theorem~\ref{thm:sii2}. 
Given $C,\eps > 0$, put $\eps' := \eps/C^{(n+1)/n}$.
If $Z \in \bZ_{n,\cs}(X)$ is a cycle with $\M(Z) \le Cr^n$, and $r$ is
sufficiently large, so that $Cr^n \ge M_0 = M_0(\eps')$, then
\[
  \FillVol_X(Z) < \eps'(Cr^n)^{(n+1)/n} = \eps r^{n+1}
\]
by Theorem~\ref{thm:sii2}. Since the asymptotic rank assumption in
Theorem~\ref{thm:lii} is only used through Theorem~\ref{thm:sii1},
this also shows that $(\SII_n) \Rightarrow (\LII_n)$.


\section{Quasiflats and quasi-minimizers} \label{sect:qmin}

A map $f \colon W \to X$ from another metric space $W$ into $X$ is an
{\em $(L,a)$-quasi-isometric embedding}, for constants $L \ge 1$ and
$a \ge 0$, if
\[
  L^{-1}d(x,y) - a \le d(f(x),f(y)) \le L\,d(x,y) + a
\]
for all $x,y \in W$. Propositions~3.6 and~3.7 in~\cite{KleL} show that 
quasi-isometric embeddings of domains $W \sub \R^n$ into $X$ give rise
to quasi-minimizing currents with controlled density,
as defined in the introduction.
An inspection of the proofs reveals that the statements hold in a
stronger form, in particular with a quasi-minimality constant $Q$ independent
of the parameter~$a$. We provide the details for convenience, and also
because parts of the proof will be used later.
The first result refers to the simpler case when the map is actually
Lipschitz.

\begin{proposition} \label{prop:lip-qflats}
For all $n \ge 1$ and $L \ge 1$ there exist\/ $C > 0$ and $Q \ge 1$ such that
the following holds. Let\/ $W \sub \R^n$ be a compact set with finite perimeter,
so that the associated current $E := \bb{W}$ (with $\spt(E) \sub W$ and
$\spt(\d E) \sub \d W$) is in $\bI_{n,\cs}(\R^n)$.
Suppose that $a \ge 0$ and $f \colon W \to X$ is an $L$-Lipschitz,
$(L,a)$-quasi-isometric embedding into a proper metric space~$X$.
Then $S := f_\#E \in \bI_{n,\cs}(X)$ has $(C,a)$-controlled density
and is $(Q,Qa)$-quasi-minimizing mod $f(\d W)$,
furthermore $d(f(x),\spt(S)) \le Qa$ for all $x \in W$ with
$d(x,\d W) > Qa$.
\end{proposition}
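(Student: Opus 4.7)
The plan is to verify the three conclusions in turn, with constants $C$ and $Q$ depending only on $n$ and $L$. The guiding principle is that at scales exceeding $a$, the map $f$ behaves like an $L$-bilipschitz embedding, reducing everything to familiar Euclidean statements.

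The density bound is immediate from the $L$-Lipschitz pushforward estimate $\|f_\#E\|(\B{x}{r}) \le L^n |W \cap f^{-1}(\B{x}{r})|$ combined with the lower quasi-isometric estimate: any two preimages of points in $\B{x}{r}$ lie at Euclidean distance at most $L(2r+a) \le 3Lr$ for $r > a$. Hence $W \cap f^{-1}(\B{x}{r})$ sits in a Euclidean ball of radius $3Lr$, with Lebesgue measure at most $3^n L^n \omega_n r^n$, yielding $C := 3^n L^{2n} \omega_n$. The distance bound in the third conclusion follows similarly: for $y \in W$ with $d(y, \d W) > Qa$ and small $\eps > 0$, the Euclidean ball $B_{\R^n}(y, L^{-1}\eps a) \sub W$ maps into $\B{f(y)}{\eps a}$, and by Kirchheim's metric differentiability theorem combined with the lower quasi-isometric bound, the metric Jacobian of $f$ is almost everywhere at least $c(n, L) > 0$; so $S$ has positive mass in $\B{f(y)}{(1+\eps)a}$, hence $d(f(y), \spt(S)) \le (1+\eps)a \le Qa$.

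The main work is the quasi-minimality. Fix $x = f(y_0) \in \spt(S)$ with $b := d(x, f(\d W)) > Qa$. First I would construct a Lipschitz coarse inverse $\tilde g \colon X \to \R^n$ by choosing a maximal $4La$-separated net $N$ in a sufficiently large neighborhood of $x$, picking any section $g \colon N \to W$ with $f \circ g = \mathrm{id}_N$; this $g$ satisfies $|g(z) - g(z')| \le L(d(z,z') + a) \le (5L/4)\,d(z,z')$ on $N$ and hence extends coordinate-wise by McShane to an $L_1 = L_1(n, L)$-Lipschitz map $\tilde g$ on $X$. The composition $h := \tilde g \circ f$ then satisfies $|h(y) - y| \le C_1(n, L)\,a$ on $W_r := W \cap f^{-1}(\B{x}{r})$. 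Given any competitor $T \in \bI_{n,\cs}(X)$ with $\d T = \d(S \on \B{x}{r})$, the pushforward $\tilde g_\# T$ is a current in $\R^n$ whose boundary equals $\d(h_\# \bb{W_r})$; since the only compactly supported $n$-cycle in $\R^n$ is zero, $\tilde g_\# T = h_\# \bb{W_r}$. A standard degree/homotopy argument then gives $\M(h_\# \bb{W_r}) \ge |W_r| - C_2\,a\,\M(\d \bb{W_r})$, while the Lipschitz bound gives $\M(\tilde g_\# T) \le L_1^n \M(T)$. Combined with $\M(S \on \B{x}{r}) \le L^n |W_r|$ and $|W_r| \ge \omega_n L^{-n}(r-a)^n$, this yields $\M(S \on \B{x}{r}) \le Q\,\M(T)$ once the boundary-layer error is controlled.

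The main obstacle I anticipate is controlling the error $a\,\M(\d \bb{W_r})$ for almost every $r \in (Qa, b)$: ensuring $a\,\M(\d \bb{W_r}) \le \tfrac{1}{2}|W_r|$ requires a careful coarea-type argument and forces the threshold $Q$ to be chosen large enough in terms of $n$ and $L$. All final constants depend only on these two parameters, as required.
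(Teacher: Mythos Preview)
Your density bound and the construction of the Lipschitz coarse inverse $\tilde g$ match the paper. There are, however, two genuine gaps.

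First, the metric Jacobian claim in your proof of the distance bound is false: the lower inequality $d(f(x),f(y)) \ge L^{-1}|x-y| - a$ is vacuous at scales below $La$ and gives no information about the metric differential. For instance, $f$ may be constant on a Euclidean ball of radius $La/2$ and still be an $L$-Lipschitz $(L,a)$-quasi-isometric embedding; the Jacobian vanishes identically there. So you cannot conclude that $\|S\|(\B{f(y)}{\eps a}) > 0$ this way.

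Second, the obstacle you flag at the end is not resolvable by a coarea argument. The definition of $(Q,Qa)$-quasi-minimizing requires the comparison inequality for \emph{almost every} $r \in (Qa,b)$, but there is no pointwise bound on $\M(\d\bb{W_r})$ in terms of $n,L,r$: coarea only controls $\int \M(\d\bb{W_s})\,ds$, and $\M(\d\bb{W_r})$ can exceed $|W_r|/(2C_2a)$ on a set of $r$'s of positive measure. The paper avoids this entirely by using the \emph{support} of the homotopy error rather than its mass. Writing $h_\#E_r = E_r + R$ (equivalently $\tilde g_\#S_r = E_r + R$, since both fill the same boundary in $\R^n$), one has $\spt(R)$ within distance $C_1a$ of $\spt(\d E_r) \sub \{|y - y_0| \ge r/L\}$, hence $\spt(R)$ misses the ball $\B{y_0}{r/(2L)}$ once $r > 2LC_1a$. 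This gives directly
\[
  \M(\tilde g_\#S_r) \ge \|E_r\|(\B{y_0}{r/(2L)}) \ge \eps r^n
\]
for some $\eps = \eps(n,L) > 0$, with no boundary term to absorb. The quasi-minimality then follows from $\M(S_r) \le Cr^n \le C\eps^{-1}\M(\tilde g_\#T) \le Q\,\M(T)$, and the distance bound is a free byproduct: the same inequality shows $S_r \ne 0$, hence $d(f(y_0),\spt(S)) \le Qa$.
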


\begin{proof}  
Let $B := \B{p}{r}$ for some $p \in X$ and $r > a$. Then
\[
  \|f_\#E\|(B) \le L^n\,\|E\|(f^{-1}(B)),
\]
and $f^{-1}(B)$ has diameter $\le L(2r + a) \le 3Lr$, thus $\|S\|(B) \le Cr^n$
for some constant $C = C(n,L)$. This shows that $S$ has $(C,a)$-controlled
density.

Next, let $V \sub W$ be a maximal subset of distinct points at mutual
distance $> 2La$ ($V = W$ in case $a = 0$).
Note that $d(f(x),f(y)) \ge (2L)^{-1}d(x,y)$
for any $x,y \in V$, thus $f|_V$ has a $2L$-Lipschitz inverse, which we can
extend to an $\bar L$-Lipschitz map $\bar f \colon X \to \R^n$ for some 
$\bar L = \bar L(n,L)$. Put $h := \bar f \circ f \colon W \to \R^n$.
For every $x \in W$ there is a $y \in V$ with $d(x,y) \le 2La$;
then $h(y) = y$ and
\[
d(h(x),x) \le d(h(x),h(y)) + d(y,x) \le (\bar L L + 1)\,d(x,y) \le Na,
\]
where $N := 2(\bar L L + 1)L$.

Let $x \in W$. Suppose that $r > 2LNa$ and $B_r := \B{f(x)}{r}$ is disjoint
from $f(\d W)$. For almost every such $r$, both $S_r := S \on B_r$ and
$E_r := E \on f^{-1}(B_r)$ are integral currents, and $f_\#E_r = S_r$.
Since $f^{-1}(B_r) \cap \spt(\d E) = \es$, the support of $\d E_r$ lies in the
boundary of $f^{-1}(B_r)$ and is thus at distance at least $L^{-1}r$ from $x$.
The geodesic homotopy from the inclusion map $W \to \R^n$ to $h$ provides a
current $R \in \bI_{n,\cs}(\R^n)$ with $\d R = h_\#(\d E_r) - \d E_r$ such that
$\spt(R)$ is within distance $Na$ from $\spt(\d E_r)$.
In fact, $R = \bar f_\#S_r - E_r$, because
$h_\#(\d E_r) = \d(h_\# E_r) = \d(\bar f_\#S_r)$ and $\bZ_{n,\cs}(\R^n) = \{0\}$.
By the choice of $r$ we have $L^{-1}r - Na > (2L)^{-1}r$,
thus $\spt(R)$ lies outside $\B{x}{(2L)^{-1}r}$. It follows that
\[
\M(\bar f_\#S_r) = \M(E_r + R) \ge \|E\|(\B{x}{(2L)^{-1}r}) \ge \eps r^n
\]
for some $\eps = \eps(n,L) > 0$. 
Now if $T \in \bI_{n,\cs}(X)$ is such that $\d T = \d S_r$, then 
$\bar f_\#T = \bar f_\#S_r$, and
\[
\M(S_r) \le Cr^n \le C\eps^{-1} \M(\bar f_\#T) \le Q'\,\M(T)
\]
for $Q' := C\eps^{-1}\bar L^n$. This holds for all 
$x \in W$ and almost all $r > 2LNa$ as long as $\B{f(x)}{r}$ is
disjoint from $f(\d W)$. In particular, since $\spt(S) \sub f(W)$,
$S$ is $(Q',2LNa)$-quasi-minimizing mod $f(\d W)$.

Finally, put $Q := \max\{Q',L(2LN + 1)\}$. Let $x \in W$
with $d(x,\d W) > Qa$. Then $w := d(f(x),f(\d W)) > L^{-1}Qa - a \ge 2LNa$.
For almost every $r \in (2LNa,w)$, the above argument shows that 
$\M(\bar f_\#S_r) > 0$, thus $S_r = S \on \B{f(x)}{r} \ne 0$,
and this implies that $d(f(x),\spt(S)) \le 2LNa \le Qa$. 
\end{proof}

For the second result, we suppose that the compact set $W \sub \R^n$ is a
{\em triangulated\/ polyhedral set}, that is, $W$ has the structure of a
finite simplicial complex all of whose maximal cells are Euclidean
$n$-simplices.
We write $W^0$ and $(\d W)^0$ for the set of vertices and boundary vertices
of the triangulation, respectively. Furthermore,
$[\,\cdot\,]_a$ stands for the closed $a$-neighborhood of a subset of $X$.

\begin{proposition} \label{prop:qflats}
For all $n \ge 1$, $c > 0$, and $K,L \ge 1$ there exist
$C > 0$ and $Q \ge 1$ such that the following holds.
Let $X$ be a proper metric space satisfying condition~$(\CI_{n-1})[c]$.
Suppose that $a > 0$, and $W \sub \R^n$ is a triangulated polyhedral set with
simplices of diameter $\le a$ such that every ball in $\R^n$ of radius
$r > a$ intersects at most $Ka^{-n}r^n$ maximal simplices.
Let $\cP_*(W)$ denote the corresponding chain complex of
simplicial integral currents. 
If $f \colon W \to X$ is an $(L,a)$-quasi-isometric embedding,
then there exists a chain map
$\iota \colon \cP_*(W) \to \bI_{*,\cs}(X)$ such that 
\begin{enumerate}
\item
$\iota$ maps every vertex $\bb{x_0} \in \cP_0(W)$ to $\bb{f(x_0)}$ and, for 
$1 \le k \le n$, every basic oriented simplex $\bb{x_0,\dots,x_k} \in \cP_k(W)$ 
to a minimizing current with support in $[f(\{x_0,\dots,x_k\})]_{Qa}$; 
\item 
$S := \iota\bb{W} \in \bI_{n,\cs}(X)$ has $(C,a)$-controlled density and
is $(Q,Qa)$-quasi-minimizing mod $[f((\d W)^0)]_{Qa}$;
\item 
$d(f(x),\spt(S)) \le Qa$ for all $x \in W$ with $d(x,(\d W)^0) > Qa$. 
\end{enumerate}
\end{proposition}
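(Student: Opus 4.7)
The plan is to construct $\iota$ inductively by minimizing fillings and then mimic the argument of Proposition~\ref{prop:lip-qflats}, with an approximate Lipschitz inverse $\bar f \colon X \to \R^n$ of $f$ playing the same role that the (honest) Lipschitz map did there. On vertices I set $\iota\bb{x_0} := \bb{f(x_0)}$, and for $1 \le k \le n$ proceed inductively: for each basic $k$-simplex $\sigma = \bb{x_0,\ldots,x_k}$ the chain $\iota(\d\sigma)$ is a $(k-1)$-cycle by the chain-map property, and I take $\iota\sigma$ to be a minimizing filling, which exists by Theorem~\ref{thm:plateau} since $(\CI_{n-1})[c]$ entails $(\EII_{k-1})[\gam]$ via Theorem~\ref{thm:eii}. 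I then prove by induction on $k$ that $\M(\iota\sigma) \le M_k a^k$ and $\spt(\iota\sigma) \sub [f(V_\sigma)]_{R_k a}$ for constants depending only on $n,c,L$: the vertex images $f(V_\sigma)$ and, inductively, $\spt(\iota\d\sigma)$ have diameter $O(a)$, so the cone inequality $(\CI_{k-1})[c]$ yields a filling of mass $O(a^k)$, which bounds $\M(\iota\sigma)$, and the lower density estimate of Theorem~\ref{thm:plateau} forces $\spt(\iota\sigma)$ to lie within $(\M(\iota\sigma)/\del_0)^{1/k} = O(a)$ of $\spt(\iota\d\sigma)$. This gives~(1) with any $Q$ above all $R_k$. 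The density part of~(2) follows by a counting argument: a maximal simplex $\sigma$ with $\spt(\iota\sigma) \cap \B{p}{r} \ne \es$ has a vertex $x_0$ with $f(x_0) \in \B{p}{(R_n+1)r}$, so by the quasi-isometric embedding the preimage set lies in a ball of radius $O(r)$ in $\R^n$; the triangulation hypothesis limits the number of such simplices to $O(a^{-n}r^n)$, and each contributes mass $\le M_n a^n$.

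To set up the quasi-minimality mod $Y := [f((\d W)^0)]_{Qa}$ I pick a maximal $2La$-separated subset $V \sub W^0$, extend $(f|_V)^{-1}$ to an $\bar L$-Lipschitz map $\bar f \colon X \to \R^n$ (with $\bar L = \bar L(n,L)$), and set $h := \bar f \circ f$, so that $d(h(z),z) \le Na$ on $W$. The key is a chain homotopy $P \colon \cP_*(W) \to \bI_{*+1,\cs}(\R^n)$ between $\bar f_\# \iota$ and $h_\#$: set $P_0 := 0$, and for $1 \le k \le n-1$ define $P\sigma$ to be a minimizing filling in $\R^n$ of the $k$-cycle $\bar f_\# \iota\sigma - h_\#\sigma - P\d\sigma$. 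Since $\bI_{n+1,\cs}(\R^n) = \{0\}$ and $\bZ_{n,\cs}(\R^n) = \{0\}$, the chain-homotopy relation collapses at $k=n$ to the automatic identity
\[
  \bar f_\# S = h_\#\bb{W} + P\d\bb{W}.
\]
An inductive support count shows $\spt(P\sigma) \sub [h(\sigma)]_{O(a)}$, so $P\d\bb{W}$ is supported in a neighbourhood of $h(\d W)$ of size $O(a)$.

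Now fix $x \in \spt(S)$ at distance $b > Qa$ from $Y$ and $r \in (Qa,b)$ for which $S_r := S \on \B{x}{r}$ is integral. By~(1) there exists $w' \in W^0$ with $d(x,f(w')) \le Qa$; the quasi-isometric embedding inequality gives $d(w',(\d W)^0) \ge L^{-1}(b-a)$, and $|\bar f(x) - w'| \le (\bar L Q + N)a$. A parallel argument on the complement shows that $\bar f$ maps $\spt(S) \sm \B{x}{r}$ to points at distance $\gtrsim r/L$ from $\bar f(x)$. Hence on a ball $B' := \B{\bar f(x)}{r/(2L)}$, which lies in the interior of $W$, away from $[h(\d W)]_{Na}$, and disjoint from the $\bar f$-image of $S - S_r$, the identity above reduces to $\bar f_\# S_r = h_\#\bb{W} = \bb{W}$ (the last equality because $h$ is close to the identity far from $\d W$), yielding
\[
  \M(\bar f_\# S_r) \ge \|\bb{W}\|(B') \ge \eps r^n.
\]
Since $\bZ_{n,\cs}(\R^n) = \{0\}$, any $T \in \bI_{n,\cs}(X)$ with $\d T = \d S_r$ satisfies $\bar f_\# T = \bar f_\# S_r$, so the density bound of~(2) combined with $\|\bar f_\# T\| \le \bar L^n\|T\|$ gives $\M(S_r) \le Cr^n \le C\eps^{-1}\bar L^n\M(T)$, which is quasi-minimality with $Q := \max\{Q,\,C\eps^{-1}\bar L^n\}$. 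Assertion~(3) drops out because $\M(\bar f_\# S_r) > 0$ implies $S_r \ne 0$, so a point of $\spt(S)$ lies within $Qa$ of $f(x)$ whenever $d(x,(\d W)^0) > Qa$.

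The main obstacle is the construction of $P$ with quantitative support control and the subsequent mass lower bound $\M(\bar f_\# S_r) \ge \eps r^n$: unlike the Lipschitz setting of Proposition~\ref{prop:lip-qflats} where $S = f_\#\bb{W}$ gives the identification with $\bb{W}$ for free, here one must propagate the identity $\bar f_\# S = h_\#\bb{W} + P\d\bb{W}$ through the induction and carefully verify that on a ball around $\bar f(x)$ of radius $\gtrsim r$ the contributions from $P\d\bb{W}$ and from the exterior part of $S$ are invisible. Tracking constants through this induction is precisely what makes $Q$ independent of $a$, as needed for the application in Section~\ref{sect:slim}.
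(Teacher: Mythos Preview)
Your overall strategy matches the paper's, but there is a gap at the chain-homotopy step. You set $h := \bar f \circ f$ and then use $h_\#$ as a chain map $\cP_*(W) \to \bI_{*,\cs}(\R^n)$, later appealing to ``$h_\#\bb{W} = \bb{W}$ because $h$ is close to the identity''. But in this proposition $f$ is only an $(L,a)$-quasi-isometric embedding, not Lipschitz---that is precisely the difference from Proposition~\ref{prop:lip-qflats}---so $h$ need not even be continuous and the current push-forward $h_\#$ is undefined. You can repair this by redeclaring $h_\#$ to be the affine simplicial map $\bb{x_0,\ldots,x_k} \mapsto \bb{h(x_0),\ldots,h(x_k)}$, but then the comparison of $h_\#\bb{W}$ with $\bb{W}$ on $B'$ becomes a second straight-line chain-homotopy argument that you have only gestured at.

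The paper avoids both the definitional issue and the extra step at once: it builds the chain homotopy directly between $\bar\iota := \bar f_\# \circ \iota$ and the \emph{inclusion} $\cP_*(\d W) \hookrightarrow \bI_{*,\cs}(\R^n)$ (each simplex sent to itself as a current), working on the boundary complex only. At level~$0$ this homotopy is the segment from $x_0$ to $h(x_0)$; higher levels are minimizing fillings in $\R^n$, just as in your construction. The resulting $R \in \bI_{n,\cs}(\R^n)$ satisfies $\d R = \bar\iota(\d\bb{W}) - \d\bb{W}$ and hence $R = \bar f_\# S - \bb{W}$ immediately (since $\bZ_{n,\cs}(\R^n) = 0$), with $\spt(R) \sub [(\d W)^0]_{\bar M a}$. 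No reference to $h_\#$ on currents, no second comparison. The remainder of your argument---the density count, the lower bound $\M(\bar f_\# S_r) \ge \eps r^n$ via exclusion of $\spt(R)$ and $\bar f(\spt(S - S_r))$ from a ball of radius $r/(2L)$ in $W$, and the deduction of quasi-minimality and~(3)---is then exactly the paper's.
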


Note that by~(1), $\spt(S) \sub [f(W^0)]_{Qa}$ and 
$\spt(\d S) \sub [f((\d W)^0)]_{Qa}$.

\begin{proof}
Put $\cS_* := \bigcup_{k=0}^n \cS_k$, where $\cS_k$ denotes the set of all 
basic oriented simplices $s = \bb{x_0,\dots,x_k} \in \cP_k(W)$
(compare p.~365 in~\cite{Fed} for the notation).
We define a map $\iota \colon \cS_* \to \bI_{*,\cs}(X)$ by induction on $k$.
For $\bb{x_0} \in \cS_0$, we put $\iota\bb{x_0} := f_\#\bb{x_0} = \bb{f(x_0)}$. 
Suppose now that $k \in \{1,\ldots,n\}$ and $\iota$ is defined on $\cS_{k-1}$.
For every $k$-cell of $W$, we choose an orientation
$s = \bb{x_0,\dots,x_k} \in \cS_k$, then we let 
$\iota(s) \in \bI_{k,\cs}(X)$ be a minimizing filling of the cycle
\[
\sum_{i=0}^k (-1)^i\,\iota\bb{x_0,\dots,x_{i-1},x_{i+1},\dots,x_k}
\in \bZ_{k-1,\cs}(X),
\]
and we put $\iota(-s) := -\iota(s)$. 
The resulting map on $\cS_*$ readily extends to a chain map 
$\iota \colon \cP_*(W) \to \bI_{*,\cs}(X)$.
Note that $f$ maps the vertex set of any cell of $W$ to a set of diameter
at most $(L+1)a$. It follows inductively from condition~$(\CI_{n-1})[c]$ and
Theorem~\ref{thm:plateau} (if $n \ge 2$, then $X$ satisfies~$(\EII_{n-1})$
by Theorem~\ref{thm:eii}) that for all $s = \bb{x_0,\dots,x_k} \in \cS_k$,
\[
\M(\iota(s)) \le Ma^k
\]
and $\spt(\iota(s)) \sub [f(\{x_0,\dots,x_k\})]_{Ma}$ for some 
constant $M \ge L + 1$ depending only on $n,c,L$.

Let now $\cS_n^+ \sub \cS_n$ be the set of all positively oriented 
$n$-simplices, whose sum is $\bb{W}$. Put $S := \iota\bb{W}$. 
To show that $S$ has controlled density, 
let $p \in X$ and $r > a$, and consider the set of all $s \in \cS_n^+$ for 
which $\spt(\iota(s)) \cap \B{p}{r} \ne \es$. Every such $s$ has a 
vertex $x^s$ with $f(x^s) \in \B{p}{r + Ma}$, thus the set of all
$x^s$ has diameter at most $L(2(r + Ma) + a) \le L(2M + 3)r$.
It follows that there are at most $Ka^{-n}(L(2M + 3)r)^n$ such simplices
and that
\[
  \D_{p,r}(S) \le C := KL^n(2M + 3)^nM
\]
for $p \in X$ and $r > a$.  

Similarly as in the proof of Proposition~\ref{prop:lip-qflats}, 
there exists an $\bar L$-Lipschitz map $\bar f \colon X \to \R^n$ such that 
$h := \bar f \circ f \colon W \to \R^n$ satisfies
\[
  d(h(x),x) \le Na
\]
for all $x \in W$,
where $\bar L$ and $N$ depend only on $n,L$. Then
\[
\bar\iota := \bar f_\# \circ \iota \colon \cP_*(W) \to \bI_{*,\cs}(\R^n)
\]
is a chain map that sends every $\bb{x_0} \in \cS_0$ to $\bb{h(x_0)}$ and
every $\bb{x_0,\dots,x_k} \in \cS_k$ to a current with support in
$[\{x_0,\dots,x_k\}]_{(\bar L M + N)a}$. 
Let $\cP_*(\d W)$ be the complex of simplicial integral currents in $\d W$.
A similar inductive construction as above, using minimizing fillings of
cycles in $\R^n$, produces a chain homotopy between the inclusion map 
$\cP_*(\d W) \to \bI_{*,\cs}(\R^n)$ and the restriction of $\bar\iota$ to
$\cP_*(\d W)$. This yields an $R \in \bI_{n,\cs}(\R^n)$ with boundary
$\d R = \bar\iota(\d\bb{W}) - \d\bb{W}$
and support $\spt(R) \sub [(\d W)^0]_{\bar Ma}$ for some constant
$\bar M = \bar M(n,c,L) \ge 1$. In fact,
\[
  R = \bar f_\#S - \bb{W},
\]
because $\bar\iota(\d\bb{W}) = \d(\bar\iota\bb{W}) = \d(\bar f_\#S)$.

Note that $\spt(S) \sub [f(W^0)]_{Ma}$. Let $\bar x \in [f(W^0)]_{Ma}$ and
$r > 0$ be such that $\B{\bar x}{r} \cap f((\d W)^0) = \es$ and
$S_r := S \on \B{\bar x}{r} \in \bI_{n,\cs}(X)$. 
We want to show that if $r > Pa$, for some sufficiently large constant
$P = P(n,c,L) \ge 1$, then $\M(\bar f_\# S_r) \ge \eps r^n$
for some $\eps = \eps(n,L) > 0$. Choose $x \in  W^0$ with
$d(f(x),\bar x) \le Ma$, and put $B_x := \B{x}{(2L)^{-1}r}$.
For all $y \in (\d W)^0$,
\[
  r < d(\bar x,f(y)) \le d(f(x),f(y)) + Ma \le L\,d(x,y) + (M+1)a
\]
and thus $d(x,y) > (2L)^{-1}r + \bar Ma$ for sufficiently large $P$;
then
\[
  (\spt(R) \cup \d W) \cap B_x = \es.
\]
Moreover, for every $\bar y \in \spt(S - S_r) \sub \spt(S)$ there is a vertex
$y \in W^0$ such that $d(f(y),\bar y) \le Ma$,
\[
  r \le d(\bar x,\bar y) \le d(f(x),f(y)) + 2Ma \le L\,d(x,y) + (2M+1)a,
\]
and $d(x,y) \le d(x,\bar f(\bar y)) + d(\bar f(\bar y),h(y)) + Na
\le d(x,\bar f(\bar y)) + (\bar L M + N)a$;  
thus $d(x,\bar f(\bar y)) > (2L)^{-1}r$ for sufficiently large $P$,
implying that
\[
\spt(\bar f_\#(S - S_r)) \cap B_x = \es.
\]
Since $\bar f_\#S_r = \bb{W} + R - \bar f_\#(S-S_r)$, it then follows that 
\[
\M(\bar f_\#S_r) \ge \|\bb{W}\|(B_x) \ge \eps r^n
\]
for some $\eps = \eps(n,L) > 0$, as desired.
Now if $T \in \bI_{n,\cs}(X)$ is such that $\d T = \d S_r$, then 
$\bar f_\#T = \bar f_\#S_r$, and
\[
\M(S_r) \le Cr^n \le C\eps^{-1} \M(\bar f_\#T) \le Q'\,\M(T)
\]
for $Q' := C\eps^{-1}\bar L^n$. Since $\spt(S)$ and $\spt(\d S)$ are within
distance $Ma$ from $f(W^0)$ and $f((\d W)^0)$, respectively, this shows
in particular that $S$ is $(Q',Pa)$-quasi-minimizing mod $[f((\d W)^0)]_{Ma}$.

Finally, put $Q := \max\{M,Q',L(P + 1)\}$. Let $x' \in W$ with
$d(x',(\d W)^0) > Qa$. Then $w := d(f(x'),f((\d W)^0)) > L^{-1}Qa - a \ge Pa$.
Note that $f(x') \in [f(W^0)]_{Ma}$, as $M \ge L + 1$.
For $\bar x = f(x')$ and almost every $r \in (Pa,w)$, the above argument
shows that $\M(\bar f_\#S_r) > 0$, thus $S_r = S \on \B{\bar x}{r} \ne 0$,
and this implies that $d(f(x'),\spt(S)) \le Pa \le Qa$. 
\end{proof}


\section{Morse lemma, slim simplices, and filling radius}
\label{sect:slim}

We now turn to the remaining assertions in Theorem~\ref{thm:main}.
For the first three results, we assume as in Sect.~\ref{sect:ii}
that $\cX$ is a class of proper metric spaces such that for some $n \ge 1$
and $c > 0$, all members of $\cX$ satisfy condition $(\CI_n)[c]$,
and every sequence $(X_i)_{i \in \N}$ in $\cX$ has asymptotic rank $\le n$.
We begin with a uniform version of the Morse lemma, analogous to
Theorem~5.1 in~\cite{KleL}. The asymptotic rank assumption is only used
through Proposition~\ref{prop:min-fill}, or Theorem~\ref{thm:sii1}, which
in turn follows from Theorem~\ref{thm:sii2}.
Hence, $(\SII_n) \Rightarrow (\ML_n)$.

\begin{theorem}[Morse lemma] \label{thm:morse}
For all $C > 0$ and $Q \ge 1$ there is a constant $l = l(\cX,n,c,C,Q) \ge 0$
such that if $X$ belongs to $\cX$, and $Z \in \bZ_{n,\cs}(X)$
has $(C,a)$-controlled density and is $(Q,a)$-quasi-minimizing mod $Y$,
where $Y \sub X$ is a closed set and $a \ge 0$,
then the support of $Z$ is within distance at most $\max\{l,4a\}$ from~$Y$.
\end{theorem}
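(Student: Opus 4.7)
We argue by contradiction. Suppose there is some $x \in \spt(Z)$ with $b := d(x,Y) > \max\{l,4a\}$ for a threshold $l = l(\cX,n,c,C,Q)$ to be determined. Set $f(r) := \|Z\|(\B{x}{r})$. The controlled-density hypothesis immediately gives the upper bound $f(r) \le Cr^n$ for all $r > a$. The strategy is to derive a polynomial lower bound on $f$ (from quasi-minimality together with the Euclidean isoperimetric inequality $(\EII_{n-1})$), and to control $\int_a^b f(r)\,dr$ from above by slicing a minimizing filling of $Z$ (invoking Proposition~\ref{prop:min-fill}). These two estimates will be incompatible once $b$ exceeds a threshold depending only on the data.

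For the lower bound, for a.e.\ $r \in (a,b)$ the slice $R_r := \d(Z \on \B{x}{r})$ belongs to $\bZ_{n-1,\cs}(X)$, is supported in $\d\B{x}{r}$, and satisfies $\M(R_r) \le f'(r)$. When $n \ge 2$, Theorem~\ref{thm:eii} supplies $(\EII_{n-1})[\gam]$ with $\gam = \gam(n,c)$, producing a filling $T_r \in \bI_{n,\cs}(X)$ of $R_r$ with $\M(T_r) \le \gam f'(r)^{n/(n-1)}$; when $n = 1$, any nonzero $0$-cycle has mass $\ge 2$, and $R_r$ cannot vanish on a set of positive measure without violating quasi-minimality applied with $T = 0$. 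The $(Q,a)$-quasi-minimality then yields $f(r) \le Q\M(T_r)$, giving the differential inequality $f'(r) \ge (f(r)/(Q\gam))^{(n-1)/n}$ for $n \ge 2$ and $f'(r) \ge 2$ for $n = 1$. Integrating from $a$ produces $f(r) \ge K_2 (r-a)^n$ throughout $(a,b)$ for some $K_2 = K_2(n,c,Q) > 0$, hence $\int_a^b f(r)\,dr \ge K_2 (b-a)^{n+1}/(n+1)$.

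For the competing upper bound, let $V \in \bI_{n+1,\cs}(X)$ be a minimizing filling of $Z$, which exists by Theorem~\ref{thm:plateau}. Since $Z$ has $(C,a)$-controlled density, in particular $\D_{x,r}(Z) \le C$ for every $r > a$, Proposition~\ref{prop:min-fill} supplies, for any prescribed $\del > 0$, a constant $\rho = \rho(\cX,n,c,C,\del)$ such that $\|V\|(\B{x}{r}) < \del r^{n+1}$ for all $r > \max\{\rho,a\}$. For a.e.\ $r$, the slice $\sigma_r := \d(V \on \B{x}{r}) - Z \on \B{x}{r}$ is an integral $n$-current supported in $\d\B{x}{r}$, and since $\d V = Z$ the identity $\d\sigma_r = -R_r$ holds. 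Thus $-\sigma_r$ is a filling of $R_r$, and quasi-minimality gives $f(r) \le Q\M(\sigma_r)$ for a.e.\ $r \in (a,b)$. The coarea inequality applied to $\pi = d(x,\cdot)$ then yields
\[
  \int_a^b f(r)\,dr \le Q \int_a^b \M(\sigma_r)\,dr \le Q\|V\|(\B{x}{b}) < Q\del\, b^{n+1},
\]
valid once $b > \max\{\rho,a\}$. Combined with the lower bound and the inequality $(b-a)/b > 3/4$ (from $b > 4a$), this forces $K_2 (3/4)^{n+1}/(n+1) \le Q\del$. Fixing $\del := K_2 (3/4)^{n+1}/(2(n+1)Q)$, which depends only on $n, c, Q$, turns this into an impossibility; setting $l := \rho(\cX,n,c,C,\del)$ yields $b \le \max\{l, 4a\}$, a contradiction.

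The main technical obstacle is the bookkeeping of slicing identities and constants: verifying that the differential inequality integrates cleanly down to $r = a^+$, that the integrand $\M(\sigma_r)$ captures the coarea contribution for the right range of $r$, and that the several constants $\gam, K_2, \del, \rho$ depend only on the asserted parameters. Once these are in place, the proof reduces to a simple three-line comparison of the lower growth estimate $f \gtrsim (r-a)^n$ against the integrated upper bound coming from Proposition~\ref{prop:min-fill}.
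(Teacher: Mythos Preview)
Your proof is correct and follows essentially the same approach as the paper's: both derive a polynomial lower density bound for $Z$ from quasi-minimality together with $(\EII_{n-1})$, convert this via slicing and coarea into a lower bound on the mass of a minimizing filling $V$ in $\B{x}{b}$, and then contradict the upper bound supplied by Proposition~\ref{prop:min-fill}. The only cosmetic difference is that the paper compares $\D_{x,r}(V)$ directly at a single radius (integrating the slice bound over $[r/2,r]$), whereas you integrate $f$ over $[a,b]$ and invoke $b-a>\tfrac{3}{4}b$; the constants and dependencies are the same.
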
  

\begin{proof}  
Let $x \in \spt(Z) \sm Y$.  
Essentially the same argument as for the second part of
Theorem~\ref{thm:plateau} (using $(\EII_{n-1})$ if $n \ge 2$) shows
that there is a constant $\del_0' = \del_0'(n,c) > 0$ such that 
$\D_{x,s}(Z) \ge \del_0'\,Q^{1-n}$ whenever $s > 2a$ and
$\B{x}{s} \cap Y = \es$ (see Lemma~3.3 in~\cite{KleL}).
Now let $V \in \bI_{n+1,\cs}(X)$ be a
minimizing filling of $Z$, and suppose that $r > 4a$ and
$\B{x}{r} \cap Y  = \es$. For almost every $s \in (2a,r)$, the slice
$T_s = \d(V \on \B{x}{s}) - Z \on \B{x}{s} \in \bI_{n,\cs}(X)$ satisfies
\[
  Q\,\M(T_s) \ge \M(Z \on \B{x}{s}) \ge \del_0'\,Q^{1-n}s^n,
\]
and integrating the inequality $\M(T_s) \ge \del_0'\,Q^{-n}s^n$ from
$r/2 > 2a$ to $r$ we get that $\D_{x,r}(V) \ge \del$ for some
$\del = \del(n,c,Q) > 0$ (compare Lemma~3.4 in~\cite{KleL}). 
On the other hand, by Proposition~\ref{prop:min-fill}
there is a constant $l := \rho(\cX,n,c,C,\del) > 0$ such that
$\D_{x,r}(V) < \del$ for all $r > \max\{l,a\}$.
Hence, $r \le \max\{l,4a\}$. 
\end{proof}  

The next statement strengthens Theorem~5.2 in~\cite{KleL}.
The proof shows that $(\ML_n) \Rightarrow (\SS_n)$.
A {\em facet\/} of an $(n+1)$-simplex is an $n$-dimensional face.

\begin{theorem}[slim simplices] \label{thm:slim}
For all $L \ge 1$ there is a constant $D = D(\cX,n,c,L) \ge 0$ such that
the following holds. Let $\Del$ be a Euclidean $(n+1)$-simplex, $X$ a member
of $\cX$, and $a \ge 0$. Suppose that $f \colon \d\Del \to X$ is a map whose
restriction to each facet of\/ $\Del$ is an $(L,a)$-quasi-isometric embedding.
Then the image of every facet is within distance at most $D(1+a)$ from the
union of the images of the remaining ones.
\end{theorem}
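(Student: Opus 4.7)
Fix $L \ge 1$, and let $X \in \cX$, $a \ge 0$, and $f \colon \d\Del \to X$ be as in the statement. The plan is to assemble the quasi-minimizing currents produced by Proposition~\ref{prop:qflats} on the individual facets into a single $n$-cycle $Z$ in $X$, and then apply the Morse lemma (Theorem~\ref{thm:morse}). Set $a' := \max\{a,1\}$, so that $f$ remains an $(L,a')$-quasi-isometric embedding on each facet; this absorbs the case $a = 0$. Choose a simplicial subdivision of $\d\Del$ with simplices of diameter $\le a'$ satisfying the ball-intersection bound of Proposition~\ref{prop:qflats} for some $K = K(n)$, compatible on the shared $(n-1)$-faces between facets.

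Following the inductive construction from the proof of Proposition~\ref{prop:qflats}, build a chain map $\iota \colon \cP_*(\d\Del) \to \bI_{*,\cs}(X)$ \emph{globally} on $\d\Del$: send vertices to point masses $\bb{f(v)}$, then attach minimizing fillings in $X$ to each $k$-simplex for $k = 1,\dots,n$. The global construction ensures that a single current is chosen for each $(n-1)$-simplex shared between two facets. Let $F_0,\dots,F_{n+1}$ be the oriented facets, let $[F_i] \in \cP_n(\d\Del)$ denote the sum of the maximal simplices within $F_i$ with a consistent orientation, and pick signs $\eps_i \in \{\pm1\}$ so that $\sum_i\eps_i[F_i]$ is the fundamental cycle of the topological $n$-sphere $\d\Del$. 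Setting $S_i := \iota[F_i]$ and
\[
Z := \sum_{i=0}^{n+1}\eps_i S_i \in \bI_{n,\cs}(X),
\]
the chain-map property of $\iota$ yields $\d Z = 0$. Proposition~\ref{prop:qflats} applied facet by facet furnishes constants $C,Q$ depending only on $n,c,L$ such that each $S_i$ has $(C,a')$-controlled density, $\spt(S_i) \sub [f(F_i)]_{Qa'}$, and $S_i$ is $(Q,Qa')$-quasi-minimizing mod $[f((\d F_i)^0)]_{Qa'}$; in particular $Z$ has $((n+2)C,a')$-controlled density.

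Fix $F_0$ as the target facet and put $Y := \bigcup_{i=1}^{n+1}\spt(S_i)$. Since $\d F_0 \sub \bigcup_{i \ge 1} F_i$, one has $[f((\d F_0)^0)]_{Qa'} \sub Y$; and for any $x \in \spt(Z)$ with $d(x,Y) > Qa'$ and any $r < d(x,Y)$, the ball $\B{x}{r}$ is disjoint from $\spt(S_i)$ for $i \ge 1$, so $Z \on \B{x}{r} = \eps_0 S_0 \on \B{x}{r}$. Thus the quasi-minimality of $S_0$ transfers, and $Z$ is $(Q,Qa')$-quasi-minimizing mod $Y$. Theorem~\ref{thm:morse} then produces $l = l(\cX,n,c,(n+2)C,Q)$ with $\spt(Z)$ within distance $\max\{l,4Qa'\}$ of $Y \sub [\bigcup_{i \ge 1}f(F_i)]_{Qa'}$. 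For $x \in F_0$, distinguish cases: if $d(x,\d F_0) \le Qa'$ then $f(x)$ is within $(LQ+1)a'$ of $f(\d F_0) \sub \bigcup_{i \ge 1}f(F_i)$; otherwise Proposition~\ref{prop:qflats} provides $x' \in \spt(S_0)$ with $d(f(x),x') \le Qa'$, and either $x' \in \spt(Z)$, so that $d(x',Y) \le \max\{l,4Qa'\}$ by Morse, or else $x'$ is cancelled in $Z$ by some $S_j$ with $j \ge 1$, forcing $x' \in \spt(S_j) \sub Y$ directly. In every case,
\[
d \bigl( f(x), \textstyle\bigcup_{i \ge 1}f(F_i) \bigr) \le \max\{l,4Qa'\} + 2Qa' \le D(1+a)
\]
for $D := l + 6Q$, since $a' \le 1+a$. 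The principal obstacle is the global construction of $\iota$ that makes $Z$ a genuine cycle; without consistent choices of minimizing fillings across facets, $\d Z$ would not vanish and Theorem~\ref{thm:morse} could not be invoked. The cancellation subtlety in the third case is defused automatically by the inclusion $\spt(S_j) \sub Y$.
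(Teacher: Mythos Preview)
Your overall strategy---building a single chain map $\iota$ on all of $\d\Del$, summing to a cycle $Z$, and applying the Morse lemma---is exactly the paper's approach. The handling of $a=0$ via $a':=\max\{a,1\}$ is a clean alternative to the paper's separate Lipschitz case.

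There is, however, a genuine gap in your choice of the set $Y$. You assert that $\d F_0 \sub \bigcup_{i\ge 1}F_i$ implies $[f((\d F_0)^0)]_{Qa'}\sub Y=\bigcup_{i\ge 1}\spt(S_i)$, but this does not follow: the $\spt(S_i)$ are (roughly) $n$-dimensional sets in $X$, and there is no reason a full metric $Qa'$-ball around some $f(v)$ should lie in their union. Without this inclusion, the transfer of quasi-minimality breaks down: knowing $d(x,Y)>r>Qa'$ does not give $d(x,[f((\d F_0)^0)]_{Qa'})>r$, so Proposition~\ref{prop:qflats} need not control $\M(S_0\on\B{x}{r})$ for the full range $r\in(Qa',d(x,Y))$, and you cannot conclude that $Z$ is $(Q,Qa')$-quasi-minimizing mod $Y$.

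The fix is to do what the paper does: take instead $Y':=[f(\bigcup_{i\ge 1}F_i)]_{Qa'}$. Then $\spt(S_i)\sub[f(F_i)]_{Qa'}\sub Y'$ for $i\ge1$, and since $(\d F_0)^0\sub\d F_0\sub\bigcup_{i\ge1}F_i$ one also has $[f((\d F_0)^0)]_{Qa'}\sub Y'$. Both conditions needed for the transfer argument now hold, $Z$ is $(Q,Qa')$-quasi-minimizing mod $Y'$, and Theorem~\ref{thm:morse} places $\spt(Z)$ within $\max\{l,4Qa'\}$ of $Y'$, hence within $\max\{l,4Qa'\}+Qa'$ of $\bigcup_{i\ge1}f(F_i)$. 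Your final case analysis then goes through (and in fact the ``cancellation'' case becomes unnecessary, since $\spt(S_0)\setminus Y'=\spt(Z)\setminus Y'$).
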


\begin{proof}
Let $W_0,\dots,W_{n+1} \sub \d\Del$ be an enumeration of the (closed) facets
of $\Del$, and let $E_i := (\d\bb{\Del}) \on W_i \in \bI_{n,\cs}(\R^{n+1})$
denote the corresponding currents, whose sum is the boundary cycle
$\d\bb{\Del} \in \bZ_{n,\cs}(\R^{n+1})$.

Suppose that $a > 0$. 
Choose a triangulation of $\d\Del$ with simplices of diameter $\le a$ such
that, for some constant $K = K(n)$ and for each $i$, any ball in $\R^{n+1}$
of radius $r > a$ intersects at most $K a^{-n}r^n$ maximal simplices in $W_i$.
Let $\cP_*(\d\Del)$ be the corresponding chain complex of simplicial integral
currents. A slight adaptation of Proposition~\ref{prop:qflats}
provides a chain map $\iota \colon \cP_*(\d\Del) \to \bI_{*,\cs}(X)$
such that the following properties hold for each 
$S_i := \iota(E_i) \in \bI_{n,\cs}(X)$ and for some constants $C,Q$ 
depending only on $n,c,L$:
\begin{enumerate}
\item
$\spt(S_i) \sub [f(W_i)]_{Qa}$ and $\spt(\d S_i) \sub [f(\d W_i)]_{Qa}$;
\item
$S_i$ has $(C,a)$-controlled density and is $(Q,Qa)$-quasi-minimizing
mod $[f(\d W_i)]_{Qa}$;
\item
$d(f(x),\spt(S_i)) \le Qa$ for all $x \in W_i$ with $d(x,\d W_i) > Qa$.
\end{enumerate}
Here $[\,\cdot\,]_{Qa}$ stands again for the closed $Qa$-neighborhood, and
$\d W_i$ denotes the relative boundary of $W_i$.
Let $M_i$ denote the union of all $W_j$ with $j \ne i$.
The cycle $Z := \iota(\d\bb{\Del}) = \sum_{i=0}^{n+1} S_i$ has
$((n+2)C,a)$-controlled density and is
$(Q,Qa)$-quasi-minimizing mod~$[f(M_i)]_{Qa}$ for every $i$.
It then follows from Theorem~\ref{thm:morse}
that the set $\spt(S_i) \sm [f(M_i))]_{Qa} = \spt(Z) \sm [f(M_i)]_{Qa}$ is
within distance at most $\max\{l',4Qa\}$ from $[f(M_i)]_{Qa}$ for some
$l' = l'(\cX,n,c,L)$.
Hence, for any $x \in W_i$, it follows from~(3) that
$d(f(x),f(M_i))$ is less than or equal to $2Qa + \max\{l',4Qa\}$ 
if $d(x,\d W_i) > Qa$ and less than or equal to $LQa + a$ otherwise.

Note that if the restriction of $f$ to each facet of $\Del$ is
$L$-Lipschitz in addition, or if $a = 0$, then the proof can be simplified
by using Proposition~\ref{prop:lip-qflats} instead of
Proposition~\ref{prop:qflats}. 
\end{proof}

The proof of the following result relies again on
Proposition~\ref{prop:min-fill}; thus $(\SII_n) \Rightarrow (\FR_n)$.

\begin{theorem}[filling radius] \label{thm:fill-rad}
For all $C > 0$ there is a constant $h = h(\cX,n,c,C) > 0$ such that
if $X$ belongs to $\cX$ and $Z \in \bZ_{n,\cs}(X)$ has $(C,a)$-controlled
density for some $a \ge 0$, then the support of every minimizing filling
$V \in \bI_{n+1,\cs}(X)$ of $Z$ is within distance at most
$\max\{h,a\}$ from $\spt(Z)$.
\end{theorem}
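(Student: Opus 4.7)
\medskip

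The plan is to combine the two-sided density information available for the minimizing filling: Theorem~\ref{thm:plateau} gives a uniform lower density bound for $V$ at points of $\spt(V)$ away from $\spt(Z)$, while Proposition~\ref{prop:min-fill} (which, as noted in the introduction, is where the iterative application of the sub-Euclidean isoperimetric inequality is absorbed) gives a matching upper density bound for $V$ at every point, once the radius exceeds a threshold.

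More precisely, by Theorem~\ref{thm:eii} every $X \in \cX$ satisfies $(\EII_n)[\gam]$ for some $\gam = \gam(n,c)$, so Theorem~\ref{thm:plateau} supplies a constant $\del_0 = \del_0(n,c) > 0$ such that every minimizing filling $V$ of $Z$ has $\D_{x,r}(V) \ge \del_0$ whenever $x \in \spt(V)$, $r > 0$, and $\B{x}{r} \cap \spt(Z) = \es$. Given $C > 0$, I would then apply Proposition~\ref{prop:min-fill} with the choice $\del := \del_0/2$, obtaining a constant
\[
  h := \rho(\cX,n,c,C,\del_0/2) > 0
\]
such that, since $Z$ has $(C,a)$-controlled density at \emph{every} point $p \in X$, one has $\D_{p,r}(V) < \del_0/2$ for all $p \in X$ and all $r > \max\{h,a\}$.

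The conclusion follows by contradiction: suppose there exists $x \in \spt(V)$ with $d(x,\spt(Z)) > \max\{h,a\}$, and pick any $r$ with $\max\{h,a\} < r < d(x,\spt(Z))$. Then $\B{x}{r}$ is disjoint from $\spt(Z)$, so the lower density bound gives $\D_{x,r}(V) \ge \del_0$; but applying the upper bound at $p = x$ gives $\D_{x,r}(V) < \del_0/2$, a contradiction.

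There is no real obstacle here: once Proposition~\ref{prop:min-fill} is in hand, the result is a one-line pinching argument between the lower and upper density bounds. The only point requiring any care is verifying that the $(C,a)$-controlled density hypothesis on $Z$ (which holds at every point) lets us apply Proposition~\ref{prop:min-fill} with center $p = x$ chosen to be the very point of $\spt(V)$ we wish to rule out, so that the Proposition's upper bound on $\D_{x,r}(V)$ directly collides with the lower bound coming from Theorem~\ref{thm:plateau}.
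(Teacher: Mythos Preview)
Your proposal is correct and follows essentially the same pinching argument as the paper's own proof: lower density from Theorem~\ref{thm:plateau}, upper density from Proposition~\ref{prop:min-fill}, contradiction. The only cosmetic difference is that the paper applies Proposition~\ref{prop:min-fill} with $\del = \del_0$ rather than $\del_0/2$, which suffices because the proposition's conclusion is a strict inequality $\D_{p,r}(V) < \del$.
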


\begin{proof}
Suppose that $x \in \spt(V) \sm \spt(Z)$.
By Theorem~\ref{thm:eii} and Theorem~\ref{thm:plateau} there are constants
$\gam = \gam(n,c)$ and $\del_0 = \del_0(n,\gam) > 0$ such that
$\D_{x,r}(V) \ge \del_0$ whenever $r > 0$ and $\B{x}{r} \cap \spt(Z) = \es$.
On the other hand, Proposition~\ref{prop:min-fill} shows that there
is a constant $h = \rho(\cX,n,c,C,\del_0) > 0$ such that
$\D_{x,r}(V) < \del_0$ for all $r > \max\{h,a\}$. Thus there is no
point $x \in \spt(V)$ at distance bigger than $\max\{h,a\}$ from
$\spt(Z)$.
\end{proof}

We now prove the implication $(\SS_n) \Rightarrow (\AR_n)$, which
holds without further assumptions on the metric space $X$.

\begin{proposition} \label{prop:ss-ar}
Let $(X_i)_{i \in \N}$ be a sequence of metric spaces $X_i = (X_i,d_i)$,
let $n \ge 1$, and suppose that for every $L \ge 1$ there exists $D \ge 0$
such that every $X_i$ satisfies $(\SS_n)$ with constant $D = D(L)$.
Then the sequence $(X_i)_{i \in \N}$ has asymptotic rank $\le n$.
\end{proposition}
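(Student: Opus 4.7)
Suppose for contradiction that the sequence $(X_i)$ has asymptotic rank at least $n+1$. Then there exist scales $r_i \to \infty$, subsets $Y_i \sub X_i$, a compact metric space $\Om$ such that $(Y_i, r_i^{-1}d_i) \to \Om$ in the Gromov--Hausdorff sense, a compact set $K \sub \R^{n+1}$ of positive Lebesgue measure, and a bi-Lipschitz embedding $\psi \colon K \to \Om$ with some constant $L_0 \ge 1$. Composing $\psi$ with $\eps_i$-isometries realizing the Gromov--Hausdorff convergence and then rescaling, one obtains maps $g_i \colon r_i K \to (X_i, d_i)$ (where $r_i K$ carries the ambient Euclidean metric) that are $(L_0, r_i \eps_i)$-quasi-isometric embeddings; after passing to a subsequence I may assume $a_i := r_i \eps_i \to 0$.

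The strategy is to transplant the boundary of a large Euclidean $(n+1)$-simplex into $X_i$ via $g_i$ and to exploit the fact that the distance from the centroid of a facet to the union of the remaining facets grows linearly in the simplex size, thereby violating the uniform $(\SS_n)$ bound. The key geometric input is a Lebesgue density argument: fix a Lebesgue density point $x_0 \in K$. For any $C \ge 1$ and all sufficiently small $\eta > 0$, the Euclidean $\eta$-neighborhood $[K]_\eta$ contains $\B{x_0}{C\eta}$, since otherwise a ball $\B{y}{\eta} \sub \B{x_0}{(C+1)\eta}$ would be disjoint from $K$, producing a density deficit $\ge (C+1)^{-(n+1)}$ incompatible with $x_0$ being a density point at the scale $(C+1)\eta$. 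Rescaling by $r_i$ shows that for any fixed $\eta, \tau > 0$ and all sufficiently large $i$, the set $[r_i K]_\eta$ contains the Euclidean ball $\B{r_i x_0}{\tau}$.

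Fix a regular $(n+1)$-simplex $\Del_0 \sub \R^{n+1}$ of diameter $1$, and for a large $\tau > 0$ to be chosen, set $\Del := r_i x_0 + \tau \Del_0$. Choose $\eta > 0$ small and $i$ large enough that $\d\Del \sub [r_i K]_\eta$, and define $f \colon \d\Del \to X_i$ by snapping each point $x \in \d\Del$ to some $x' \in r_i K$ with $|x - x'| \le \eta$ and setting $f(x) := g_i(x')$. A direct computation shows that the restriction of $f$ to each facet of $\Del$ is an $(L_0, 2L_0 \eta + a_i)$-quasi-isometric embedding. Applying $(\SS_n)$ with $L = L_0$ and the uniform constant $D := D(L_0)$ provided by the hypothesis, the image of each facet $W_j$ of $\Del$ lies within distance $D(1 + 2L_0 \eta + a_i)$ from the image $f(M_j)$ of the union $M_j$ of the remaining facets. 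On the other hand, the centroid $p_j$ of $W_j$ satisfies $\inf_{q \in M_j}|p_j - q| \ge c_n \tau$ for some $c_n > 0$ depending only on $\Del_0$, so the QI lower bound on $f$ yields $d_i(f(p_j), f(M_j)) \ge L_0^{-1} c_n \tau - (2L_0 \eta + a_i)$. Choosing $\tau$ larger than an explicit constant depending only on $n, L_0, D(L_0)$, the two estimates are incompatible.

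The principal obstacle is setting up the density/snapping step cleanly: $\eta$ must be chosen small enough that $[K]_\eta$ swallows arbitrarily large multiples of $\eta$-balls around $x_0$, and then held fixed while $\tau$ is allowed to grow along the sequence, all the while keeping the QI additive error of $f$ bounded. Once the embedding $f$ is in place, the contradiction with $(\SS_n)$ is an immediate consequence of the Euclidean simplex geometry.
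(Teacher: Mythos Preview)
Your overall strategy matches the paper's: use a Lebesgue density point of $K$ to transplant the boundary of a large $(n+1)$-simplex into $X_i$ via snapping, then compare the $(\SS_n)$ upper bound with the Euclidean lower bound coming from the quasi-isometry. The density/snapping step and the final comparison are both fine.

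There is, however, a genuine gap in the sentence ``after passing to a subsequence I may assume $a_i := r_i\eps_i \to 0$''. You have $r_i \to \infty$ and $\eps_i \to 0$, but their product is not controlled: the Gromov--Hausdorff distortion $\eps_i$ is determined by the convergence and could easily satisfy $r_i\eps_i \to \infty$ (e.g.\ $r_i = i$, $\eps_i = i^{-1/2}$), and no subsequence helps. Since your final inequalities read
\[
L_0^{-1}c_n\tau - (2L_0\eta + a_i) \;>\; D\,(1 + 2L_0\eta + a_i),
\]
an unbounded $a_i$ kills the contradiction for any fixed $\tau$.

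The paper resolves this with a diagonal choice of \emph{two} scales. First, for each $k$ one uses density to get a $(1,\tfrac{1}{k}\lambda_k)$-quasi-isometry $\psi_k \colon \lambda_k B \to K$ with $\lambda_k > 0$; then one chooses $i(k)$ so large that $s_k := \lambda_k r_{i(k)} \to \infty$ \emph{and} $\delta_{i(k)}/\lambda_k \to 0$. The composite $f_k \colon s_kB \to X_{i(k)}$ is then an $(L,\eps_k s_k)$-quasi-isometric embedding with $\eps_k \to 0$, i.e.\ the additive error is $o(s_k)$. In your language, the point is that once the density scale is fixed you still have the freedom to push $i$ far enough along the sequence to crush the GH error relative to that scale; your single-scale setup forfeits this freedom. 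With this modification your argument goes through and is essentially identical to the paper's.
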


\begin{proof}
Suppose to the contrary that the sequence $(X_i)_{i \in \N}$ has asymptotic
rank $> n$. Then there exist a compact set $K \sub \R^{n+1}$ with positive
Lebesgue measure, an $L$-bi-Lipschitz map $\phi \colon K \to \Om$
onto some metric space $\Om$, and a sequence of $(1,\del_i)$-quasi-isometric
embeddings $h_i \colon \Om \to (X_i,r_i^{-1}d_i)$, where $L \ge 1$,
$\del_i \to 0$, and $r_i \to \infty$.
We can assume that $0 \in \R^{n+1}$ is a Lebesgue density point of $K$.
Let $B := \B{0}{1} \sub \R^{n+1}$.
For all $k \in \N$ there is a $\lam_k > 0$ such that every point
in $\lam_k B$ is at distance $\le (2k)^{-1}\lam_k$ from some point in $K$,
thus there exist $(1,k^{-1}\lam_k)$-quasi-isometric embeddings
$\psi_k \colon \lam_kB \to K$.
Choose $i(k) \in \N$ such that $s_k := \lam_k r_{i(k)} \to \infty$
and $\eps_k := \lam_k^{-1}\del_{i(k)} + k^{-1}L \to 0$.
It is straightforward to check that the map
\[
  f_k \colon s_kB \to (X_{i(k)},d_{i(k)})
\]
defined by $f_k(s_kx) = h_{i(k)} \circ \phi \circ \psi_k(\lam_kx)$
for all $x \in B$ is an $(L,\eps_ks_k)$-quasi-isometric embedding.

Now let $\Del$ be any $(n+1)$-simplex inscribed in $B$, and pick a point
$x$ in a facet of $\Del$ such that $x$ is a distance
$\del > 0$ away from the union $M$ of the remaining facets.
For every $k$, the point $f_k(s_kx)$ is at distance at least
$L^{-1}\del s_k - \eps_ks_k$ from $f_k(s_kM)$. On the other hand, by
assumption, there is a constant $D = D(L)$ such that the distance is at
most $D(1 + \eps_ks_k)$. This leads to the inequality
$L^{-1}\del - \eps_k \le D(s_k^{-1} + \eps_k)$, which contradicts
the fact that $s_k \to \infty$ and $\eps_k \to 0$.
\end{proof}

Lastly, we show that $(\FR_n) \Rightarrow (\AR_n)$. This is similar
to Theorem~6.1 in~\cite{Wen-AR}.

\begin{proposition} \label{prop:fr-ar}
Let $n \ge 1$ and $c > 0$, and let $(X_i)_{i \in \N}$ be a sequence of proper
metric spaces $X_i = (X_i,d_i)$ satisfying $(\CI_n)[c]$. Suppose further that
for every $C > 0$ there exists $h > 0$ such that every $X_i$ satisfies 
$(\FR_n)$ with constant $h = h(C)$. Then the sequence $(X_i)_{i \in \N}$
has asymptotic rank $\le n$.
\end{proposition}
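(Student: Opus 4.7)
The plan is to argue by contradiction, paralleling Proposition~\ref{prop:ss-ar} but with $(n+1)$-currents in place of quasi-isometric embeddings of boundary faces. Suppose the asymptotic rank of $(X_i)$ were strictly greater than $n$. Following the first part of the proof of Proposition~\ref{prop:ss-ar}, I would extract $L \ge 1$, sequences $s_k \to \infty$ and $\eps_k \to 0$, indices $i(k) \in \N$, and $(L, \eps_k s_k)$-quasi-isometric embeddings $f_k \colon s_k B \to X_{i(k)}$, where $B \sub \R^{n+1}$ is the closed unit ball. I set $a_k := \eps_k s_k$, noting that $a_k/s_k \to 0$.

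Next, I would apply a straightforward $(n+1)$-dimensional analog of Proposition~\ref{prop:qflats}, using the cone inequality $(\CI_n)[c]$ to fill the $n$-cycles along the simplicial skeleton of a triangulation of $s_k B$ with simplices of diameter at most $a_k$. This produces an $(n+1)$-current $S_k \in \bI_{n+1,\cs}(X_{i(k)})$ with $(C, a_k)$-controlled density whose boundary $\d S_k$ has $(C, a_k)$-controlled density and support in the $Qa_k$-neighborhood of $f_k(\d(s_k B))$, for constants $C, Q$ depending only on $n, c, L$. Applying the hypothesis $(\FR_n)$ to $\d S_k$ with density constant $C$, I obtain a filling $V_k \in \bI_{n+1,\cs}(X_{i(k)})$ of $\d S_k$ with
\[
  \spt(V_k) \sub [\spt(\d S_k)]_{\max\{h(C), a_k\}}
  \sub [f_k(\d(s_k B))]_{(Q+1)\max\{h(C), a_k\}}.
\]

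I would then rescale the metric on $X_{i(k)}$ by $1/s_k$. In this scaling $f_k$ becomes an $(L, \eps_k)$-quasi-isometric embedding of $B$, the rescaled $S_k$ has uniformly bounded mass (from the controlled density), and the rescaled $\spt(V_k)$ lies in a neighborhood of thickness $O(\eps_k + h(C)/s_k) \to 0$ around $f_k(\d B)$. Passing to a Gromov--Hausdorff subsequential limit of the restricted rescaled spaces as in the proof of Theorem~\ref{thm:sii1}, and applying Theorem~\ref{thm:cptness}, I obtain a compact asymptotic subset $\Om$ of $(X_i)$, an $L$-bi-Lipschitz embedding $f_\omega \colon B \to \Om$, and weak limits $S_\omega, V_\omega \in \bI_{n+1,\cs}(\Om)$ of the push-forwards of $S_k, V_k$. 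Since weak convergence commutes with $\d$, we have $\d V_\omega = \d S_\omega = f_{\omega\#}(\d\bb{B})$, a nonzero $n$-cycle in $\Om$. On the other hand, $\spt(V_\omega)$ is contained in the Hausdorff limit of the rescaled support neighborhoods, namely $f_\omega(\d B)$, a bi-Lipschitz image of the $n$-sphere $\d B$. By Theorem~8.3 of~\cite{Lan-LC} (cited in the proof of Theorem~\ref{thm:sii1}), a nonzero $(n+1)$-current must have support containing a bi-Lipschitz image of a positive-measure subset of $\R^{n+1}$; hence $V_\omega = 0$, forcing $\d S_\omega = 0$, a contradiction.

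The main obstacle is controlling the rescaled masses $s_k^{-(n+1)}\M(V_k)$ so that Theorem~\ref{thm:cptness} yields a well-defined weak limit $V_\omega$. The hypothesis $(\FR_n)$ as stated provides a close filling of finite but a priori unbounded mass. I would address this by choosing $V_k$ to be mass-minimizing among fillings of $\d S_k$ with support in the specified neighborhood: the cone inequality $(\CI_n)[c]$ gives a competing filling of $\d S_k$ (not necessarily close) of mass $O(\diam(\spt(\d S_k))\cdot\M(\d S_k)) = O(s_k^{n+1})$, and combining this with an iterated application of $(\FR_n)$ along the lines of Theorem~6.1 in~\cite{Wen-AR} should bound $\M(V_k) = O(s_k^{n+1})$, as required.
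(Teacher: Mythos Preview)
Your route through a Gromov--Hausdorff limit and weak convergence of $(n+1)$-currents is genuinely different from the paper's argument, and the obstacle you flag at the end is a real gap that your proposed fix does not close.

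The paper avoids any limiting procedure. It works with an $(n+1)$-simplex $\Del \sub B$, constructs only the $n$-cycle $Z_k \in \bZ_{n,\cs}(X_{i(k)})$ corresponding to $\d(s_k\Del)$ via Proposition~\ref{prop:qflats}, and crucially retains the approximate left inverse $\bar f_k \colon X_{i(k)} \to \R^{n+1}$ produced there, together with $R_k \in \bI_{n+1,\cs}(\R^{n+1})$ satisfying $\d R_k = \bar f_{k\#}Z_k - \d\bb{s_k\Del}$ and $\spt(R_k) \sub [\d(s_k\Del)]_{\bar M\eps_ks_k}$. Now for \emph{any} filling $V_k$ of $Z_k$, with no mass control whatsoever, one has $\bar f_{k\#}V_k - R_k = \bb{s_k\Del}$ because $\bZ_{n+1,\cs}(\R^{n+1}) = 0$. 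Hence $\bar f_k(\spt(V_k))$ meets a fixed interior point $s_kx$ of $s_k\Del$, and the corresponding $y_k \in \spt(V_k)$ lies at distance $\gtrsim s_k$ from $\spt(Z_k)$. This contradicts the close filling from $(\FR_n)$ directly, for each large $k$.

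Your argument, by contrast, needs a weak limit $V_\omega$ of the rescaled $V_k$, and here the hypothesis is too weak: $(\FR_n)$ as stated in Theorem~\ref{thm:main} supplies \emph{some} filling with close support but no mass information. Your fix does not work. A mass minimizer among fillings supported in the thin shell exists, but the cone filling you invoke as competitor does not lie in that shell, so it bounds nothing; and iterated applications of $(\FR_n)$ never produce a mass estimate either. (Had the hypothesis been the stronger conclusion of Theorem~\ref{thm:fill-rad}, that \emph{minimizing} fillings are close, a minimizing $V_k$ would serve, but that is not what is assumed.) There is a secondary loose end: the identification $\d S_\omega = f_{\omega\#}(\d\bb{B})$ is asserted without justification, and establishing $\d S_\omega \ne 0$ ultimately requires the approximate inverse $\bar f_k$ anyway, at which point the paper's direct finitary argument is both shorter and complete.
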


\begin{proof}
Suppose to the contrary that $(X_i)_{i \in \N}$ has asymptotic rank $> n$.
Let $L,s_k,\eps_k$ and $f_k \colon s_kB \to X_{i(k)}$
be given as in the first part of the proof of Proposition~\ref{prop:ss-ar}.
Let again $\Del$ be any $(n+1)$-simplex inscribed in $B$.
For every $k$, fix a triangulation of $\d\Del$ with simplices of
diameter at most $\eps_k$ such that, for some constant $K = K(n)$,
every ball in $\R^{n+1}$ of radius $r > \eps_k$ meets at most
$K(r/\eps_k)^n$ maximal simplices in each facet of $\Del$.
It then follows as in the proof of Proposition~\ref{prop:qflats}
that for every $k$, and for some constants $C,\bar L,\bar M$ depending
only on $n,c,L$, there exist a cycle $Z_k \in \bZ_{n,\cs}(X_{i(k)})$ with
$(C,\eps_ks_k)$-controlled density,
an $\bar L$-Lipschitz map $\bar f_k \colon X_{i(k)} \to \R^{n+1}$,
and a current $R_k \in \bI_{n+1,\cs}(\R^{n+1})$ such that
$\d R_k = \bar f_{k\#}Z_k - \d\bb{s_k\Del}$ and
\[
  \spt(R_k) \cup \bar f_k(\spt(Z_k)) \sub [\d(s_k\Del)]_{\bar M\eps_k s_k}.
\]  
Fix a point $x \in \Del$ a distance $\del > 0$ away from $\d\Del$.
Suppose that $k$ is so large that $\bar M\eps_k < \del$,
and $V_k \in \bI_{n+1,\cs}(X_{i(k)})$ is any filling of $Z_k$.
Then $\d\bb{s_k\Del} = \d(\bar f_{k\#}V_k) - \d R_k$, hence
$\bb{s_k\Del} = \bar f_{k\#}V_k - R_k$ and
$s_k\Del \sub \spt(\bar f_{k\#}V_k) \cup \spt(R_k)$.
Since $s_kx \not\in \spt(R_k)$,
there is a point $y_k \in \spt(V_k)$ such that $\bar f_k(y_k) = s_kx$.
For every $z_k \in \spt(Z_k)$, we have
\[
  s_k\del = d(s_kx,s_k\Del) \le d(s_kx,\bar f_k(z_k)) + \bar M\eps_k s_k
  \le \bar L\,d_{i(k)}(y_k,z_k) + \bar M\eps_k s_k.
\]
On the other hand, by assumption, there exists a filling $V_k$ of $Z_k$
whose support is within distance $\max\{h,\eps_ks_k\}$ from $\spt(Z_k)$,
where $h = h(C)$. This leads to the inequality
$\del \le \bar L \max\{s_k^{-1}h,\eps_k\} + \bar M \eps_k$,
which contradicts the fact that $s_k \to \infty$ and $\eps_k \to 0$.
\end{proof}

The uniform statements in Sect.~\ref{sect:ii} and above can be combined
to show that the implications in Theorem~\ref{thm:main} that we proved
through $(\AR_n)$ hold with constants independent of $X$.
We exemplify this for $(\FR_n) \Rightarrow (\SII_n)$.
If $\cX$ denotes the class of all proper metric spaces satisfying
$(\CI_n)[c]$ and $(\FR_n)$ for some fixed $c > 0$ and $h = h(C)$,
then Proposition~\ref{prop:fr-ar} shows that every sequence $(X_i)_{i \in \N}$
in $\cX$ has asymptotic rank $\le n$. Hence, by Theorem~\ref{thm:sii2},
$(\SII_n)$ holds for some constant $M_0 = M_0(\cX,n,c,\eps) > 0$,
which depends only on $n,c,\eps$ and the function $h = h(C)$.

\subsection*{Acknowledgement}
We thank Stefan Wenger for a useful discussion. 



\end{document}